\newcommand{\g}{\geqslant}
\newcommand{\sph}{\mathbb{S}}
\newcommand{\RR}{\mathbb{R}}
\newcommand{\ZZ}{\mathbb{Z}}
\newcommand{\CC}{\mathbb{C}}
\newcommand{\s}{\mathcal{S}}
\newcommand{\NN}{\mathbb{N}}
\newcommand{\p}{\partial}
\newcommand{\q}{\varphi}
\newcommand{\les}{\leqslant}
\newcommand{\lesa}{\lesssim}
\newcommand{\B}{ \dot{{B}}^{\alpha}_{p, q}}
\newcommand{\supp}{\text{supp }\,}
\newcommand{\F}{ \dot{{F}}^{\alpha}_{p, q}}
\newcommand{\Oh}{\mathcal{O}}
\newcommand{\mc}[1]{\mathcal{#1}}
\theoremstyle{plain}
\newtheorem{theorem}{Theorem}[section]
\newtheorem{proposition}[theorem]{Proposition}
\newtheorem{lemma}[theorem]{Lemma}
\newtheorem{corollary}[theorem]{Corollary}
\theoremstyle{definition}
\newtheorem{definition}{Definition}[section]
\theoremstyle{remark}
\newtheorem{remark}{Remark}[section]
\title[Characterisation of B-L and T-L spaces]{A Characterisation of the Besov-Lipschitz  and Triebel-Lizorkin spaces using Poisson like Kernels}
\author{Huy-Qui Bui and Timothy Candy}
\address{School of Mathematics \& Statistics, University of Canterbury, Private Bag 4800, Christchurch 8140, New Zealand.} \email{huy-qui.bui@canterbury.ac.nz}
\address{Fakult\"{a}t f\"{u}r Mathematik, Universit\"{a}t Bielefeld, Postfach 10 01 31, 33501 Bielefeld, Germany.}
\email{tcandy@math.uni-bielefeld.de}
\date{\today}
\begin{document}


\keywords{Besov-Lipschitz space, Triebel-Lizorkin space, Calder\'{o}n reproducing formula, maximal function, Poisson kernel}

\subjclass[2010]{Primary 42B25, 46E35}

\begin{abstract}
We give a complete characterisation of the spaces $\B$ and $\F$ by using a non-smooth kernel satisfying near minimal conditions. The tools used include a Str\"omberg-Torchinsky type estimate \cite{Stromberg1989}  for certain maximal functions and the concept of a distribution of finite growth, inspired by Stein \cite{Stein1993}. In addition, our exposition also makes essential use of a number of refinements of the well-known Calder\'{o}n reproducing formula.
The results are then applied to obtain the characterisation of these spaces via a fractional derivative of the Poisson kernel.
Moreover, our results offer an approach to deal with the calculus modulo polynomials in homogeneous function spaces, a subtle problem raised recently by Triebel \cite{Triebel2015}.
\end{abstract}

\maketitle

\tableofcontents

\section{Introduction and statements of main results} We aim to characterise the Besov-Lipschitz space $\B$ and the Triebel-Lizorkin space $\F$ using a kernel $\psi$ which satisfies near ``minimal'' conditions regarding cancellation, smoothness and decay. In particular, we give a direct characterisation that works in the full scale $\alpha \in \RR$, $0<p, q\les \infty$ in the important case of fractional derivatives of the Poisson kernel (in other words, via harmonic functions). In addition, in the case $\alpha<\frac{n}{p}$, we give a general characterisation of $\dot{B}^\alpha_{p, q}$ and $\dot{F}^\alpha_{p, q}$ as subsets of $\s'$ (rather than the more standard $\s'/\mc{P}$); see Remark \ref{rem - mod polynomials not needed} below.

The Besov-Lipschitz and Triebel-Lizorkin scales of spaces arise in many applications in mathematics. In particular, they are of crucial importance in interpolation theory \cite{Berg1976,Peetre1976,Triebel1992} and contain many well known function spaces in mathematical analysis. For instance, $\dot{F}^0_{p, 2}$ is identified with the real-variable Hardy
space $H^p$ of Fefferman-Stein \cite{FS1972}, while $\dot{F}^0_{\infty, 2}$ is identified with $BMO$, the space of functions of bounded mean oscillation \cite{Frazier1990}.\\

To facilitate the discussion to follow, we begin by recalling the definition of the homogeneous Besov-Lispchitz and Triebel-Lizorkin spaces following Peetre \cite{Peetre1975,Peetre1976} (see also \cite{Triebel1992}). All functions and distributions are defined on the Euclidean space $\RR^n$ unless otherwise stated. Let $\q \in \s$ such that $\supp \widehat{\q} = \{ 1/2 \les |\xi| \les 2 \}$ and for every $\xi \not = 0$
        \begin{equation}\label{eqn - intro sum cond on q} \sum_{j  \in \ZZ} \widehat{\q}(2^{-j} \xi) \widehat{\q}(2^{-j} \xi) = 1. \end{equation}
The function $\q$ is fixed throughout this article. Given a function $\phi:\RR^n \rightarrow \CC$ we let $\phi_j(x) = 2^{jn} \phi(2^j x)$ denote the \emph{dyadic dilation} of $\phi$. Somewhat confusingly, for $t \in \RR$ with $t>0$, we let $ \phi_t(x) = t^{-n} \phi( t^{-1}x )$ denote the standard dilation. It should always be clear from the context the type of dilation we are using.

Let $0< p, q \les \infty$ and $\alpha \in \RR$. The (homogeneous) \emph{Besov-Lipschitz} space $\B$ is then defined as the class of all tempered distributions  modulo polynomials $f \in \s'/\mathcal{P}$ such that
    \begin{equation}\label{eqn - intro defn of B} \| f \|_{\B} = \bigg( \sum_{j \in \ZZ} \Big( 2^{j \alpha} \| \q_j * f \|_{L^p} \Big)^q \bigg)^\frac{1}{q} < \infty.\end{equation}
Similarly, for $0< p, q \les \infty$, $p \not = \infty$, and $\alpha \in \RR$, the (homogeneous) \emph{Triebel-Lizorkin} space $\F$ is defined as the class of all $f \in \s'/\mathcal{P}$ such that
    \begin{equation}\label{eqn - intro defn of F} \| f \|_{\F} = \bigg\| \bigg(\sum_{j \in \ZZ} \Big( 2^{j \alpha} |\q_j * f| \Big)^q \bigg)^\frac{1}{q}\bigg\|_{L^p}  < \infty. \end{equation}
The definition of $\dot{F}^\alpha_{\infty,q}$ is slightly different. The problem is that a naive extension of (\ref{eqn - intro defn of F}) to the case $p=\infty$ leads to spaces which are \emph{not} independent of the choice of kernel, and moreover the expected identification $\dot{F}^0_{\infty, 2}  = BMO$ fails; see the discussion in Section 5 of \cite{Frazier1990}. Instead, following the work of \cite{Frazier1990} we define
    $$ \| f \|_{\dot{F}^\alpha_{\infty, q}} = \sup_{Q} \bigg( \frac{1}{|Q|} \int_Q \sum_{j \g - \ell(Q)} \big( 2^{ j \alpha} |\q_j*f(x)|\big)^q dx \bigg)^\frac{1}{q}$$
with the interpretation that when $q=\infty$,
\begin{equation}\label{Finfty}
 \| f \|_{\dot{F}^\alpha_{\infty, \infty}} = \sup_{Q} \sup_{j \g - \ell(Q)}
 \frac{1}{|Q|} \int_Q  \, 2^{ j \alpha} |\q_j*f(x)|\,dx,
\end{equation}
where the sup is over all dyadic cubes $Q$, and $\ell(Q)= \log_2( \text{ side length of $Q$ })$. The above interpretation for the norm
$\|\cdot\|_{\dot{F}^\alpha_{\infty, \infty}}$ is taken from \cite{Bui2000} where one can also find the embedding $\dot{F}^\alpha_{\infty,q} \subset
\dot{F}^\alpha_{\infty, \infty} = \dot{B}^\alpha_{\infty, \infty}.$

Observe that elements of the quasi Banach spaces $\dot{B}^\alpha_{p, q}$ and $\dot{F}^\alpha_{p, q}$ are equivalence classes of distributions modulo polynomials. However we often make a common (and harmless) abuse of notation by regarding elements of $\dot{B}^\alpha_{p, q}$ and $\dot{F}^\alpha_{p, q}$ as distributions, rather than equivalence classes. \\

The fundamental and central result in the study of the spaces $\dot{B}^\alpha_{p, q}$ and $\dot{F}^\alpha_{p, q}$  is the independence of these function spaces on the choice of the kernel function $\q$. Thus, given a kernel $\psi \in \s$ satisfying certain conditions, if we replace $\q$ in (\ref{eqn - intro defn of B}) and (\ref{eqn - intro defn of F}) with $\psi \in \s$ we have an equivalent norm.  This independence was initially established in the pioneering works of Peetre \cite{Peetre1975,Peetre1976} for all Besov-Lipschitz spaces and for the Triebel-Lizorkin spaces
when $p < \infty$, and the result applied in particular to band-limited kernels.  The method used by Peetre was inspired by the real-variable theory for various maximal functions, developed in the seminal paper \cite{FS1972} by Fefferman and Stein. The result for $\dot{F}^\alpha_{\infty, q}$ was proved later in \cite{Frazier1990} (see also \cite{Bui2000}). After further partial results by Triebel \cite{Triebel1988,Triebel1992},
the essentially optimal conditions, at least in the Schwartz case $\psi \in \s$, were developed in Bui, Paluszy\'nski and Taibleson in \cite{Bui1996}, \cite{Bui1997} and \cite{Bui2000} where it was shown that we have an equivalent norm for any $f\in \s'/\mathcal{P}$, provided that the kernel $\psi \in \s$ satisfies the following:
 \begin{enumerate}[label=(\Roman*)]
    \item (\emph{Vanishing moments}) The kernel $\psi$ has $[\alpha]$ \emph{vanishing moments}, thus
                $$ \int_{\RR^n} x^\kappa \psi(x) dx = 0$$
         for every $|\kappa|\les [\alpha]$, with the understanding that no condition is required when $\alpha<0$.\\

    \item (\emph{Tauberian condition}) The kernel $\psi$ satisfies the  \emph{Tauberian condition}; that is,  for every $\xi \in \mathbb{S}^{n-1}$ there exists $a, b\in \RR$ (depending on $\xi$) with $0<2a\le b$ such that for every $a\le t\le b$
                    $$ |\widehat{\psi}( t \xi) |  >0.$$
    \end{enumerate}
Here, given $\alpha \in \RR$ we let $[\alpha]$ denote the integer part of $\alpha$. Note that the conditions (I) and (II) do not require that the kernel $\psi$ is band-limited. Thus, for example,  it is possible to characterise the Besov-Lipschitz and Triebel-Lizorkin spaces with derivatives of the Gaussian kernel $e^{ - |x|^2}$. \\

In this paper we consider the problem of removing the assumption $\psi \in \s$. Our key goal is to give conditions on the kernel that are simply stated and easily checked, while still being as close as possible to be optimal. Moreover, they should apply in particular to the important case of fractional derivatives of the Poisson kernel $ ( 1 + |x|^2)^{-\frac{n+1}{2}}$.

The first, and somewhat obvious, obstacle in the non-smooth case $\psi \not \in \s$, is that to define the norms (\ref{eqn - intro defn of B}) and (\ref{eqn - intro defn of F}) we need to be able to define the convolution $\psi *f$ for arbitrary $f \in \s'$. This is clearly not possible unless $\widehat{\psi}$ is infinitely differentiable and all its derivatives are slowly increasing. As our main application, the Poisson kernel, does not satisfy the last conditions (its Fourier transform is not differentiable at the origin),
we need to restrict the class of distributions slightly to a natural class of admissible distributions. To this end, inspired by Stein \cite{Stein1993}, we introduce the concept of  \emph{distributions of bounded growth}.

\begin{definition}\label{defn dist of finite growth}[Distributions of growth $\ell$]
We say $f \in \s'$ is a distribution of growth $\ell \g 0$ if for any $\phi \in \s$ we have $\phi * f = \mathcal{O}(|x|^\ell)$ (as $|x| \to \infty$).
\end{definition}

The importance of this definition is that it allows us to make sense of the convolution $\psi*f$ when $\psi \not \in \s$.

\begin{definition}\label{Defn of convolution}
  Assume $f$ is a distribution of growth $\ell$. Then if $(1+|\cdot|)^\ell \psi \in L^1$ we define the convolution $\psi * f \in \s'$ as
            $$ \psi*f(\phi) = \int_{\RR^n} \psi(x) (\widetilde{\phi}*f)(x) dx,
 \quad \phi\in \s,$$
where $\widetilde{\phi}(x)=\phi(-x)$. This definition coincides with the pointwise definition for $\psi*f$ when $f=\Oh(|x|^\ell)$ is locally integrable.
\end{definition}

We note that Stein used the concept of a bounded distribution (the case $\ell = 0$ in Definition \ref{defn dist of finite growth}) to characterise the Hardy spaces $H^p$ using the Poisson kernel (see \cite{Stein1993, FS1972}).\\

Before proceeding to state the main theorem we prove, we discuss the key conditions we require on our kernel. To this end, we take parameters $\Lambda \g 0$ and $m, r \in \RR$, and suppose $\psi \in L^1(\RR^n)$.

\begin{enumerate}
  \item[(\textbf{C1})] (\emph{Cancelation}) Let $\widehat{\psi} \in C^{n+1+[\Lambda]}(\RR^n \setminus \{0\})$ such that for every $|\kappa| \les n+1+[\Lambda]$ we have
                $$ \p^\kappa \widehat{\psi} = \mathcal{O}( |\xi|^{r-|\kappa|}) \qquad \text{ as $|\xi| \rightarrow 0$.} $$

  \item[(\textbf{C2})] (\emph{Tauberian condition}) The kernel $\psi$ satisfies the Tauberian condition (as in (II) above).

  \item[(\textbf{C3})] (\emph{Smoothness}) Take $\widehat{\psi} \in C^{n+1+[\Lambda]}(\RR^n\setminus \{ 0\})$ such that for every $|\kappa| \les n+1+[\Lambda]$ we have
                $$ \p^\kappa \widehat{\psi} = \mathcal{O}( |\xi|^{-n-m}) \qquad \text{ as $|\xi| \rightarrow \infty$.} $$
\end{enumerate}

The parameters $\Lambda$ and $m$ roughly correspond to the decay and smoothness we require on a component of our kernel $\psi$. More precisely if $\phi \in \s$ has Fourier support away from the origin, then $\widehat{\psi} \in C^{n+1+[\Lambda]}(\RR^n \setminus\{0\})$ implies that $\psi *\phi(x) = \mathcal{O}(|x|^{-n-1-[\Lambda]})$ as $|x| \rightarrow \infty$. Thus larger $\Lambda$ requires more decay on the part of $\psi$ with Fourier support away from the origin. Similarly, if (\textbf{C3}) holds, then $\psi \in C^{[m]}(\RR^n)$. Thus the larger we take $m$, the smoother the kernel $\psi$ is required to be.

On the other hand, the parameter $r$ and the cancellation condition (\textbf{C1}) are closely related to the vanishing moments condition (I). More precisely, if $\psi \in \s$ then it is easy to check that $\psi$ has $[\alpha]$ vanishing moments (i.e. (I) holds), if and only if (\textbf{C1}) holds with $\alpha < r \les [\alpha]+1$. Of course if $\psi \not \in \s$ then the relationship between (I) and (\textbf{C1}) is somewhat complicated, but roughly speaking (I) requires more \emph{spatial} decay, while (\textbf{C1}) requires more regularity of $\widehat{\psi}$ near the origin. It is worth pointing out that it is possible to prove the characterisations below with (\textbf{C1}) replaced with (I), but this requires more decay on the kernel $\psi$ and leads to less optimal conditions. Instead we have chosen to use conditions on the Fourier transform of $\psi$, as firstly this matches up very well with our intended application to the Poisson kernel, and secondly, in the authors' opinion the conditions (\textbf{C1}), (\textbf{C2}), and (\textbf{C3}) form an acceptable balance between the sharpness of our result, and the simplicity of its statement. \\

 It is natural to split the characterisation results into two theorems:  ``Necessary Conditions'' and ``Sufficient Conditions''.  This is due to fact that, as noted in \cite{Bui1996}, each theorem requires a slightly different set of assumptions. The essential assumption for the former is the cancellation property of the kernel, expressed by the condition  (\textbf{C1}), while for the latter the Tauberian condition (II) stated earlier in the introduction is critical. Other conditions, such as the decay at infinity of the kernel in the frequency domain expressed by the smoothness condition (\textbf{C3}), are needed to define the convolution with distributions of finite growth.\\

In the necessary direction, the statement of our result is expressed using a maximal function introduced in the work of Peetre \cite{Peetre1975}. Given a kernel $\psi$, if $f\in \s'$ is such that each $\psi_j*f$ is a function, one defines the \emph{Peetre maximal function} by
\begin{equation}\label{eqn - defn Peetre max func}
\psi_j^*f(x) = \psi^*_{j,\lambda}f(x) = \sup_{y\in \RR^n}\frac{|\psi_j*f(x-y)|}{(1+2^j|y|)^\lambda}\;,
\quad x\in \RR^n,
\end{equation}
where $\lambda>n/p$ in the Besov-Lipschitz case and $\lambda>\max\{n/p,\,n/q\}$ in the Triebel-Lizorkin case (with $\lambda>n$ for
the space $\dot{F}^\alpha_{\infty, \infty}$). Unless otherwise stated, the number $\lambda$  satisfies these conditions throughout this work.

In the rest of the paper we write $A \lesa B$ when there exists a positive constant $C$ such that $A \les CB$, where $C$ may depend on the parameters such as $n,\alpha,p,q,...$ but usually not on the variable quantities such as the distribution $f$. When $j,k\in \ZZ$ we write $j\lesa k$ to mean that $j\les k+c$ for some $c\in \ZZ$ independent of $j$ and $k$.\\

We can now state our main results. We start with the necessary direction.

\begin{theorem}\label{thm - main thm necessary cond}
Let $\alpha \in \RR$ and $0<p, q \les \infty$. Let $ \ell\g0$ with  $\ell > \alpha - \frac{n}{p}$. Assume $(1+|\cdot|)^\ell \psi \in L^1$ and that $\psi$ satisfies the cancellation condition (\textbf{C1}) and smoothness condition (\textbf{C3}) for parameters $\Lambda \g 0$, $r>\alpha$, and $m > \Lambda - \alpha$.
\begin{enumerate}
\item
Let $f \in \B$ and $\Lambda = \frac{n}{p}$. Then there exists a polynomial $\rho$ such that $f-\rho$ is a distribution of growth $\ell$ and we have the inequalities
\begin{equation}\label{necessary besov}
\Big( \sum_{j \in \ZZ} \big( 2^{j\alpha} \| \psi^*_j (f-\rho)\|_{L^p} \big)^q \Big)^{\frac{1}{q} } \lesa \| f \|_{\B},
\end{equation}
and for any $\phi \in \s$
\begin{equation}\label{necessary hardy besov}
\Big( \sum_{j \in \ZZ} \Big( 2^{j\alpha} \Big\| \sup_{t>0}|\phi_t*\psi_j * (f-\rho)|\Big\|_{L^p} \Big)^q \Big)^{\frac{1}{q} } \lesa \| f \|_{\B}.
\end{equation}

\item
Similarly if $f\in \F$ and we let $\Lambda= \max\{ \frac{n}{p}, \frac{n}{q} \}$ (with $\Lambda=n$ when $p=q=\infty$),  then there exists a polynomial $\rho$ such that $f-\rho$ is a distribution of growth $\ell$ and if $p<\infty$
        \begin{equation}\label{necessary triebel}
                \Big\| \Big( \sum_{j \in \ZZ} \big( 2^{j\alpha}  \psi^*_j (f-\rho)  \big)^q \Big)^{\frac{1}{q} } \Big\|_{L^p} \lesa \| f \|_{\F},
        \end{equation}
and for $p=\infty$
        \begin{equation}\label{necessary triebel p=infty}
           \sup_Q \bigg(\frac{1}{|Q|} \int_Q \sum_{j \g - \ell(Q)} \big( 2^{ j \alpha} \psi^*_j(f-\rho)(x) \big)^q dx\bigg)^\frac{1}{q} \lesa \| f\|_{\dot{F}^\alpha_{\infty, q}}.
        \end{equation}
Furthermore, if $\phi \in \s$, we have for $p<\infty$
        \begin{equation}\label{necessary hardy triebel}
             \Big\| \Big( \sum_{j \in \ZZ} \big( 2^{j\alpha}  \sup_{t>0} |\phi_t*\psi_j* (f-\rho) | \big)^q \Big)^{\frac{1}{q} } \Big\|_{L^p} \lesa \| f \|_{\F}
        \end{equation}
and $p=\infty$
        \begin{equation}\label{necessary hardy triebel p=infty}
           \sup_Q \bigg( \frac{1}{|Q|} \int_Q \sum_{j \g - \ell(Q)} \big( 2^{ j \alpha} \sup_{t>0} |\phi_t*\psi_j*(f-\rho)(x)| \big)^q dx \bigg)^\frac{1}{q}\lesa \| f\|_{\dot{F}^\alpha_{\infty, q}}.
        \end{equation}
\end{enumerate}
\end{theorem}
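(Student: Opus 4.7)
The plan is to base the entire argument on a Calderón reproducing formula tailored to the distribution class at hand. Writing $\mathrm{id} = \sum_{j \in \ZZ} \q_j * \q_j *$ on $\s'/\mathcal{P}$, I would first show that for $f$ in either space the series
\[
\sum_{j \g 0} \q_j * \q_j * f + \sum_{j<0} \q_j * \q_j * f
\]
can be made to converge (in the appropriate topology) to a genuine tempered distribution, possibly after subtracting from the low-frequency tail the Taylor polynomial of $\q_j * \q_j * f$ at the origin up to order $[\ell]$. The polynomial corrections at each $j<0$ will sum to a single polynomial $\rho$, and the resulting distribution $f - \rho$ will be of growth $\ell$. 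The hypothesis $\ell > \alpha - n/p$ is exactly what is needed to control the low-frequency tail: using $\|\q_j * f\|_{L^p} \lesa 2^{-j\alpha} c_j$ (with $c_j \in \ell^q$ in the Besov case, respectively controllable by a maximal function in the Triebel case) and a standard argument with Taylor's theorem, the remainder terms decay like $2^{j(\ell+n/p-\alpha)}$ as $j \to -\infty$, which is summable.

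Once $f - \rho$ is realised as a distribution of growth $\ell$, the convolution $\psi_k * (f-\rho)$ is well-defined and, by the reproducing formula,
\[
\psi_k * (f-\rho)(x) = \sum_{j \in \ZZ} (\psi_k * \q_j) * (\q_j * f)(x).
\]
The heart of the argument is then a pointwise convolution estimate of the form
\[
\psi^*_k(f-\rho)(x) \lesa \sum_{j \in \ZZ} 2^{-|j-k|\varepsilon}\, \q^*_j f(x),
\]
for some $\varepsilon>0$. The kernel $\psi_k * \q_j$ is estimated separately in the two regimes: for $j \g k$ the cancellation condition (\textbf{C1}) with $r>\alpha$ yields $2^{-(j-k)(r-\alpha)}$-type gain after pairing with the Fourier support of $\widehat{\q_j}$, while for $j<k$ the smoothness condition (\textbf{C3}) with $m > \Lambda - \alpha$ yields the corresponding high-frequency decay $2^{-(k-j)(m+\alpha-\Lambda)}$. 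Inserting the standard estimate $|\q_j * f(x-y-z)| \les (1+2^j|y+z|)^\lambda\,\q^*_j f(x)$ absorbs the translation loss against the weighted $L^1$ norms of $\psi_k * \q_j$, which are controlled using $(1+|\cdot|)^\ell \psi \in L^1$.

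With the pointwise bound in hand, the passage to the stated norms is mechanical: for Besov-Lipschitz spaces, take $L^p$ then $\ell^q$ norms and apply Young's inequality on $\ZZ$ in $j$, and then control $\|\q^*_j f\|_{L^p}$ by $\|\q_j * f\|_{L^p}$ using the Fefferman--Stein--Peetre maximal estimate (valid since $\lambda>n/p$). For Triebel-Lizorkin with $p<\infty$, the same discrete convolution bound is composed with the vector-valued Fefferman--Stein inequality (which requires $\lambda>\max\{n/p,n/q\}$). The estimates \eqref{necessary hardy besov} and \eqref{necessary hardy triebel} follow from the same scheme after noting that $\phi_t * \psi_k = (\phi_t * \psi) \star$-type structure gives $\sup_{t>0}|\phi_t * \psi_k * g| \lesa \psi^*_k g$ pointwise (after absorbing $\phi_t$ into a fixed Schwartz envelope).

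The case $p=\infty$ of the Triebel-Lizorkin spaces is the principal obstacle, and I would reserve a separate argument for \eqref{necessary triebel p=infty} and \eqref{necessary hardy triebel p=infty}. There the Carleson-cube normalisation must be preserved throughout the convolution decomposition: the high-frequency piece $\sum_{j \g k}$ localises well on the cube $Q$, but the low-frequency tail $\sum_{j<k}$ requires a Whitney-type decomposition of $Q$ together with an off-diagonal argument, because $\q_j * f$ is no longer controlled by a cube-localised quantity when $j < -\ell(Q)$. This is the step I expect to be the most delicate, and it is where the exponent gain $2^{-|j-k|\varepsilon}$ from (\textbf{C1}) and (\textbf{C3}) must be used to sum the contributions from cubes at various scales without losing the Carleson normalisation, in the spirit of the arguments of Bui \cite{Bui2000} and Frazier--Jawerth \cite{Frazier1990}.
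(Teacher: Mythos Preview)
Your overall architecture matches the paper's: realise $f-\rho$ as a distribution of growth $\ell$ via the Calder\'on formula with Taylor corrections, expand $\psi_k*(f-\rho)=\sum_j(\psi_k*\q_j)*(\q_j*f)$, derive the pointwise almost-orthogonality bound $2^{k\alpha}\psi^*_k(f-\rho)\lesa\sum_j a_{j-k}\,2^{j\alpha}\q^*_jf$, and then conclude by discrete Young/Peetre maximal bounds (and a separate argument at $p=\infty$). Two points, however, need correction.

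\textbf{The roles of (C1) and (C3) are reversed.} On the Fourier support of $\widehat{\q_j}$ one has $|\xi|\sim 2^j$, so $\widehat{\psi_k}(\xi)=\widehat\psi(2^{-k}\xi)$ is evaluated at $|\cdot|\sim 2^{j-k}$. Thus $j\ge k$ probes the \emph{high-frequency} behaviour of $\widehat\psi$ and is controlled by the smoothness condition (\textbf{C3}) with parameter $m$, giving the gain $2^{-(j-k)(m+\alpha-\lambda)}$ (this is where $m>\Lambda-\alpha$ is used). Conversely $j<k$ probes the behaviour of $\widehat\psi$ near the origin and is controlled by the cancellation condition (\textbf{C1}) with parameter $r$, giving $2^{(j-k)(r-\alpha)}$ (using $r>\alpha$). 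Your description swaps these. The structure of the argument survives, but as written the parameter matching is wrong and the claimed exponents would not follow from the stated hypotheses.

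\textbf{The $\phi_t$ step does not work as you describe it.} The inequality $\sup_{t>0}|\phi_t*\psi_k*g(x)|\lesa \psi^*_k g(x)$ is false in general: inserting $|\psi_k*g(x-y)|\le(1+2^k|y|)^\lambda\psi^*_k g(x)$ leaves the factor $\int|\phi_t(y)|(1+2^k|y|)^\lambda\,dy\sim(2^kt)^\lambda$ when $t\gg 2^{-k}$, which blows up. The paper avoids this by first passing through the Calder\'on decomposition and then proving the \emph{uniform} bound $\q^*_j(\phi_t*f)(x)\lesa\q^*_j f(x)$: writing $\q_j=\q_j*\mu_j$ with $\mu\in\s$ Fourier-supported on an annulus, one estimates $\|\,(1+2^j|\cdot|)^\lambda\,\mu_j*\phi_t\|_{L^1}\lesa 1$ uniformly in $t$ (the band-limitation of $\mu_j$ is what kills the large-$t$ regime). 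With that in hand, $\sup_t|\phi_t*\psi_k*(f-\rho)|\le\psi^*_k(\phi_t*f)\lesa\sum_j a_{j-k}\q^*_j(\phi_t*f)\lesa\sum_j a_{j-k}\q^*_j f$, and the rest proceeds as before.

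For $p=\infty$ your sketch is in the right spirit; the paper implements it via a shifted Carleson estimate $\frac{1}{|Q|}\int_Q\sum_{j\ge-(\ell(Q)+k)}(2^{j\alpha}\q^*_jf)^q\lesa(1+|k|)\|f\|_{\dot F^\alpha_{\infty,q}}^q$, whose polynomial loss in $k$ is then absorbed by the exponential decay of $a_k$.
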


\begin{remark}
A few remarks are in order.

(i) When $q=\infty$, the inequality
(\ref{necessary hardy triebel p=infty}) is interpreted similarly to the definition of the $\dot{F}^\alpha_{\infty, \infty}$-norm (\ref{Finfty}); i.e.,
$$
\sup_Q \sup_{j \g - \ell(Q)} \frac{1}{|Q|}\int_Q \big(2^{ j \alpha} \sup_{t>0} |\phi_t*\psi_j*(f-\rho)(x)|\big)  dx \lesa \| f\|_{\dot{F}^\alpha_{\infty, \infty}}.
$$
We adopt this interpretation hereafter in all the theorems and proofs.

(ii) The assumptions on the kernel $\psi$ ensure that each convolution $\psi_j*(f-\rho)$ is well-defined and moreover is a continuous function (see Theorem \ref{thm - calderon and the pointwise defn of conv}). This is a consequence of two key steps. The first is to show that if $f \in \B$ (or $f \in \F$), then there is a polynomial $\rho$ such that $f-\rho$ is in fact a distribution a growth $\ell > \alpha - \frac{n}{p}$ (see Theorem \ref{thm - classical calderon on B} and Corollary \ref{cor - growth bound on B and F}). Thus we can define the convolution $\psi*(f-\rho)$ as a \emph{distribution} via Definition \ref{defn dist of finite growth}. The second step is to use a version of the Calder\'{o}n reproducing formula to deduce that the distribution $\psi * (f - \rho)$ is in fact a continuous function (see Theorem \ref{thm - calderon and the pointwise defn of conv}). It is important to note that both of these steps rely crucially on the fact that we assume $f \in \B$ (or $f \in \F$), and thus have some control over the growth \emph{and} smoothness of the distribution $f$.

 (iii) Given $\phi \in \s$ with $\int \phi \not = 0$ and $0<p\les\infty$, the characterisation of the real-variable Hardy space $H^p$, defined by Fefferman and Stein in \cite{FS1972}, gives
             $$ \Big\|\sup_{t>0}|\phi_t*g|\Big\|_{L^p} \approx \|g\|_{H^p}.$$
(Note that when $1<p\les \infty$, $H^p=L^p$ with equivalent norms.)
 Thus it follows from (\ref{necessary hardy besov}) that
\begin{equation}\label{necessary hardy besov2}
\Big( \sum_{j \in \ZZ} \Big( 2^{j\alpha} \Big\|\psi_j * (f-\rho)\Big\|_{H^p} \Big)^q \Big)^{\frac{1}{q} } \lesa \| f \|_{\B}.
\end{equation}

(iv) Since $|\psi_j*g|$ is clearly dominated pointwise by $\psi_j^*g$ we may replace (\ref{necessary besov}) with
\begin{equation}\label{necessary besov2}
\Big( \sum_{j \in \ZZ} \big( 2^{j\alpha} \| \psi_j* (f-\rho)\|_{L^p} \big)^q \Big)^{\frac{1}{q} } \lesa \| f \|_{\B}.
\end{equation}
Similarly we may replace the Peetre maximal function $\psi^*_j (f-\rho)$ in (\ref{necessary triebel}) and (\ref{necessary triebel p=infty}) with the standard convolution $|\psi_j*(f-\rho)|$.
\end{remark}

We next consider the converse to the above theorem; that is, to find sufficient conditions on the kernel $\psi$ and the distribution $f$ such that the reverse inequalities to those in Theorem \ref{thm - main thm necessary cond} holds.
We emphasise that the results in this sufficient part are the main contribution of this paper.
As soon as the assumption $\psi\in \s$ is dropped, one immediately runs into the difficulty of defining the convolution $\psi_j*f$ when $f\in \s'$. The situation is different from the necessary result in Theorem \ref{thm - main thm necessary cond} where we already knew that $f \in \B$ or $f\in \F$, and therefore the convolution given in Definition \ref{Defn of convolution} can be seen to be a continuous bounded function via what is essentially a duality argument (see Theorem \ref{thm - calderon and the pointwise defn of conv}).
However, if $f$ is a distribution of growth $\ell\g 0$, then we have seen  it is possible to define $\psi_j*f$ as a distribution under rather mild condition on $\psi$ (see Definition \ref{Defn of convolution}). Our first result in the sufficient direction makes use of this observation.

\begin{theorem}\label{thm - sufficient hardy}
Let $\alpha \in \RR$ and $0<p, q \les \infty$. Assume $f$ is a distribution of growth $\ell \g 0$. Suppose $(1+|\cdot|)^\ell \psi \in L^1$ satisfies the Tauberian condition (\textbf{C2}) and there exists $m \in \RR$ such that the smoothness condition (\textbf{C3}) holds for $\Lambda \g 0$.
\begin{enumerate}
\item Let $\Lambda=\max\{\ell, \frac{n}{p}\}$. Then

\begin{equation}\label{sufficient - hardy besov}
\| f\|_{\B} \lesa \Big( \sum_{j \in \ZZ} \big( 2^{j \alpha} \| \psi_j*f\|_{H^p}\big)^q \Big)^{\frac{1}{q}}.
\end{equation}
\item
Let $\Lambda=\max\{\ell, \frac{n}{p}, \frac{n}{q}\}$ and $\phi \in \s$ with $\int \phi(x)dx \neq 0$. If $p<\infty$ then
\begin{equation}\label{sufficient - hardy triebel}
 \| f\|_{\F} \lesa \Big\| \Big( \sum_{j \in \ZZ} \big( 2^{j\alpha} \sup_{t>0} |\phi_t*\psi_j*f|\big)^q \Big)^{\frac{1}{q}} \Big\|_{L^p},
\end{equation}
and in the case $p=\infty$
\begin{equation}\label{sufficient - hardy p infty triebel}
  \| f \|_{\dot{F}^\alpha_{\infty, q}} \lesa \sup_{Q} \bigg( \frac{1}{|Q|} \int_Q \sum_{j \g - \ell(Q)} \big( 2^{j \alpha} \sup_{t>0}|\phi_t*\psi_j*f(x)|\big)^q dx \bigg)^\frac{1}{q}
\end{equation}
(with $\Lambda=\max\{n, \ell\}$ when $p=q=\infty$).
\end{enumerate}
\end{theorem}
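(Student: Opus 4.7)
The plan is to invert the left-hand Littlewood--Paley decomposition via a Calder\'{o}n-type reproducing formula built from $\psi$, and then reduce matters to a scale-by-scale pointwise comparison of $\q_j$ with the test kernel $\psi_k$. Using the Tauberian condition (\textbf{C2}), one constructs an auxiliary kernel $\eta \in \s$ with $\supp\widehat\eta$ in a thin annulus around the unit sphere such that $\sum_k \widehat\eta(2^{-k}\xi)\widehat\psi(2^{-k}\xi) = 1$ for every $\xi \ne 0$. Combined with the version of the Calder\'{o}n reproducing formula valid for distributions of growth $\ell$ (established earlier in the paper, relying on Definition \ref{Defn of convolution}), this yields
\[
\q_j * f \;=\; \sum_{k \in \ZZ} \q_j * \eta_k * \psi_k * f \;=\; \sum_{|k-j|\les N} \q_j * \eta_k * \psi_k * f,
\]
the collapse to $|k-j|\les N$ for some fixed $N$ being forced by the annular Fourier supports of $\q$ and $\eta$.

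The core of the argument is then a single pointwise estimate. A direct Fourier computation shows that $\q_j * \eta_k = (\sigma^{(k-j)})_k$, a dyadic dilation of one of finitely many fixed Schwartz kernels $\sigma^{(m)}$ indexed by $m = k-j \in \{-N,\ldots,N\}$. The standard Hardy-space comparison principle (all reasonable Schwartz-kernel maximal functions of a tempered distribution are pointwise comparable) then delivers
\[
|\q_j * \eta_k * \psi_k * f(x)| \;\lesa\; \sup_{t>0} |\phi_t * \psi_k * f(x)|
\]
uniformly in $k$ and in the finitely many values of $j-k$. This is used directly in the Triebel-Lizorkin case, while for the Besov case one integrates and invokes the Fefferman--Stein characterisation $\|\sup_{t>0}|\phi_t*g|\|_{L^p} \approx \|g\|_{H^p}$. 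Summation then closes the proof: multiplying by $2^{j\alpha}$ and taking $\ell^q(\ZZ)$--$L^p$ (Besov) or $L^p$--$\ell^q(\ZZ)$ (Triebel) norms, the finite sum over $|k-j|\les N$ is absorbed by a discrete Young/Minkowski inequality after re-indexing $k \mapsto j$. In the $p=\infty$ Triebel case one restricts the pointwise bound to a dyadic cube $Q$; the shift $|k-j|\les N$ widens the constraint $j \g -\ell(Q)$ by only a bounded amount, yielding (\ref{sufficient - hardy p infty triebel}) by a dyadic-cube manipulation of the sort used in \cite{Bui2000}.

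The principal obstacle is the uniform-in-$k$ pointwise maximal comparison. Because $\psi \not\in \s$, each $\psi_k * f$ is \emph{a priori} only a distribution (promoted to a continuous function by (\textbf{C3}) and Definition \ref{Defn of convolution}), so the Hardy-space comparison must be justified in this broader setting and with constants independent of $k$. This is exactly what the hypothesis $\Lambda \g \max\{\ell,\,n/p,\,n/q\}$ (respectively $\Lambda \g \max\{\ell, n\}$ when $p=q=\infty$) is calibrated for: (\textbf{C3}) supplies just enough decay of the building blocks of $\psi$ at infinity to keep the Peetre envelope $\psi^{*}_{k,\lambda} f$ pointwise finite with $\lambda > \max\{n/p,\,n/q\}$, which is exactly the range of $\lambda$ in which the subsequent (vector-valued for Triebel) maximal-function manipulation closes.
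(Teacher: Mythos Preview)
Your argument for the Besov-Lipschitz case (i) is essentially correct and coincides with the alternative direct proof the paper records in the Remark following its proof of the theorem: one uses the reproducing formula built from $\psi$ to write $\q_j*f$ as a finite sum $\sum_{|k-j|\les s}\eta_k*\q_j*\psi_k*f$, bounds each summand by the Fefferman--Stein tangential maximal function $M^{**}_\lambda(\psi_k*f)$, takes $L^p$ norms term by term, and uses $\|M^{**}_\lambda g\|_{L^p}\lesa\|g\|_{H^p}$. One caveat: you cannot in general take $\eta\in\s$; the construction only yields $\widehat\eta\in C^{n+1+[\Lambda]}$ (since $\widehat\eta$ must contain a factor $1/\widehat\psi$), but this is enough decay for the argument.

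The Triebel--Lizorkin case, however, has a genuine gap. Your ``standard Hardy-space comparison principle'' giving the \emph{pointwise} bound
\[
|\q_j*\eta_k*\psi_k*f(x)|\;\lesa\;\sup_{t>0}|\phi_t*\psi_k*f(x)|
\]
is not available. What the Fefferman--Stein theory gives pointwise is domination of the left side by the tangential maximal function $M^{**}_\lambda(\psi_k*f)(x)$, and the passage from $M^{**}_\lambda$ back to the radial maximal function $\sup_t|\phi_t*\,\cdot\,|$ is an $L^p$ statement, not a pointwise one. The paper makes exactly this point in the Remark: the direct route yields $\|f\|_{\F}\lesa\big\|\big(\sum_j(2^{j\alpha}M^{**}_\lambda(\psi_j*f))^q\big)^{1/q}\big\|_{L^p}$, but there is no vector-valued inequality carrying $M^{**}_\lambda$ to $\sup_t|\phi_t*\,\cdot\,|$, so (\ref{sufficient - hardy triebel}) does not follow.

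The paper's actual proof avoids this by a different reduction. One fixes $s>0$ small enough that the \emph{smoothed} kernel $\phi_s*\psi$ still satisfies the Tauberian condition, and then verifies that $\phi_s*\psi$ satisfies all the hypotheses (S1)--(S3) of Theorem~\ref{thm - sufficient Lp assuming maximal func finite} (the Str\"omberg--Torchinsky maximal inequality developed in Section~\ref{sec - maximal ineq}). That theorem then gives
\[
\|f\|_{\F}\;\lesa\;\Big\|\Big(\sum_{j\in\ZZ}\big(2^{j\alpha}|(\phi_s*\psi)_j*f|\big)^q\Big)^{1/q}\Big\|_{L^p},
\]
after which the passage to the radial maximal function is trivial: $|(\phi_s*\psi)_j*f(x)|=|\phi_{sj}*\psi_j*f(x)|\les\sup_{t>0}|\phi_t*\psi_j*f(x)|$. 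The role of (\textbf{C3}) with $\Lambda=\max\{\ell,n/p,n/q\}$ is to feed the hypotheses of Theorem~\ref{thm - sufficient Lp assuming maximal func finite}, not to justify a pointwise Hardy-space comparison as you suggest.
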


\begin{remark}\leavevmode

(i) Observe that we are free to choose the smoothness parameter $m$, thus an alternative way to state the smoothness condition on $\psi$, is that we simply require $\p^\kappa \widehat{\psi}$ to be slowly increasing for $|\kappa| \les  n + 1 + [\Lambda]$.

(ii) Theorem \ref{thm - sufficient hardy} together with Theorem \ref{thm - main thm necessary cond} give the following complete characterisation of $\B$. Let $\ell> \alpha - \frac{n}{p}$, $r> \alpha$, $m > \frac{n}{p} - \alpha$, and $\Lambda= \max\{\ell, \frac{n}{p}\}$. If the kernel $(1+|\cdot|)^\ell \psi \in L^1$ satisfies the conditions (\textbf{C1}), (\textbf{C2}), and (\textbf{C3}), then $ f \in \B$  if and only if $f$ is a distribution of growth $\ell$ and $\Big( \sum_{j \in \ZZ} \big( 2^{j \alpha} \| \psi_j*f\|_{H^p}\big)^q \Big)^{\frac{1}{q}}< \infty$. A similar comment applies in the Triebel-Lizorkin case.
\end{remark}

If we want a version of Theorem \ref{thm - sufficient hardy} with $H^p$ replaced with $L^p$, we need to assume more on our kernel $\psi$ to ensure that each $\psi_j *f$ is a measurable function (as apposed to just an element of $\s'$). It is worth noting that, under the assumptions of Theorem \ref{thm - sufficient hardy}, if we assume the right hand side of (\ref{sufficient - hardy besov}) is finite, then since $H^p = L^p$ for $p>1$, we may freely replace the $H^p$ norm with the $L^p$ norm in (\ref{sufficient - hardy besov}). Thus at first glance, it may appear that the conditions on $\psi$ in Theorem \ref{thm - sufficient hardy} are sufficient to also deduce an $L^p$ version of (\ref{sufficient - hardy besov}).

However this is slightly misleading, as we may only replace $H^p$ with $L^p$ after making the a priori assumption that the right hand side of (\ref{sufficient - hardy besov}) is finite. Without this finiteness assumption, it is not possible to ensure that the distribution $\psi_j*f$ is in a fact a function. Thus in general, under the assumptions on $\psi$ in Theorem \ref{thm - sufficient hardy}, the norm $\| \psi_j*f \|_{L^p}$ is not defined. Consequently, if our goal is to prove a direct characterisation without any auxiliary assumptions on the distribution $f$, to ensure that $\psi_j*f$ is a measurable function, we need to make further assumptions on our kernel $\psi$. One such condition is found in the next theorem.

\begin{theorem}\label{thm - sufficient with rapidly decreasing}
Let $\alpha \in \RR$ and $0<p, q \les \infty$. Assume $f$ is a distribution of growth $\ell \g 0$. Suppose $(1+|\cdot|)^\ell \psi \in L^1$ satisfies the Tauberian condition (\textbf{C2}) and that for every $m \in \RR$, the smoothness condition (\textbf{C3}) holds with $\Lambda \g \ell$.   Then for every $j\in \ZZ$ the convolution $\psi_j*f$ is a continuous function. Moreover, if $\Lambda= \max\{\ell, \frac{n}{p}\}$ then
\begin{equation}\label{sufficient - lebesgue besov}
\| f\|_{\B} \lesa \Big( \sum_{j \in \ZZ} \big( 2^{j \alpha} \| \psi_j *f\|_{L^p} \big)^q \Big)^{\frac{1}{q}}.
\end{equation}
Similarly, if $\Lambda=\max\{ \ell, \frac{n}{p} , \frac{n}{q} \}$ and $p<\infty$ then
\begin{equation}\label{sufficient - lebesgue triebel}
\| f\|_{\F} \lesa \Big\| \Big( \sum_{j \in \ZZ} \big( 2^{j\alpha} |\psi_j*f|\big)^q \Big)^{\frac{1}{q}} \Big\|_{L^p}
\end{equation}
and in the case $p=\infty$
\begin{equation}\label{sufficient - p infty triebel}
  \| f \|_{\dot{F}^\alpha_{\infty, q}} \lesa \sup_{Q} \bigg( \frac{1}{|Q|} \int_Q \sum_{j \g - \ell(Q)} \big( 2^{j \alpha} |\psi_j*f(x)|\big)^q dx \bigg)^\frac{1}{q}
\end{equation}
(with $\Lambda=\max\{n, \ell\}$ when $p=q=\infty$).

\end{theorem}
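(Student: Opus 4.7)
The strategy is to first show that the distribution $\psi_j*f$ is represented by a continuous function, and then upgrade the $H^p$-type estimate of Theorem~\ref{thm - sufficient hardy} to the claimed $L^p$-estimate by pointwise domination.

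\emph{Continuity of $\psi_j*f$.} The assumption that $\p^\kappa\widehat\psi=O(|\xi|^{-n-m})$ holds for every $m\in\RR$ and every $|\kappa|\les n+1+[\Lambda]$ forces $\widehat\psi\in L^1(\RR^n)$, and iterating the same observation gives $\psi\in C^\infty(\RR^n)$ with all derivatives bounded. To exhibit $\psi_j*f$ as a function, I would invoke a Calder\'on reproducing identity based on (\ref{eqn - intro sum cond on q}): modulo a polynomial correction (absorbed by the growth-$\ell$ structure of $f$), one writes
$$\psi_j*f=\sum_{k\in\ZZ}(\psi_j*\varphi_k)*(\varphi_k*f).$$
Each $\varphi_k*f$ is a $C^\infty$ function of growth $\ell$ (since $\varphi_k\in\s$). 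Each $\psi_j*\varphi_k$ has Fourier support in the annulus $\{|\xi|\approx 2^k\}$; the $C^{n+1+[\Lambda]}$-regularity of $\widehat{\psi_j}$ on the annulus ensures $\psi_j*\varphi_k\in C^\infty$ with pointwise decay of order $n+1+[\Lambda]$, and since $\Lambda\g\ell$ the pointwise convolution $(\psi_j*\varphi_k)*(\varphi_k*f)$ converges absolutely to a continuous function. Almost-orthogonality---the rapid decay of $\p^\kappa\widehat\psi$ at infinity handling $k\gg j$, while $(1+|\cdot|)^\ell\psi\in L^1$ combined with the Tauberian condition (\textbf{C2}) controls $k\ll j$---yields uniform convergence of the sum on compacta, identifying $\psi_j*f$ with the resulting continuous function.

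\emph{The $H^p$-to-$L^p$ upgrade.} Once $\psi_j*f$ is known to be a function, Theorem~\ref{thm - sufficient hardy} reduces the problem to bounding $\|\sup_{t>0}|\phi_t*\psi_j*f|\|_{L^p}\lesa\|\psi_j*f\|_{L^p}$, with the analogous vector-valued statement in the Triebel--Lizorkin case. For $p>1$ this is immediate from $\sup_{t>0}|\phi_t*g|\les C_\phi\,Mg$ and the $L^p$-boundedness of the Hardy--Littlewood maximal operator~$M$. For $p\les 1$ (or $q\les 1$ in the $\F$-case), I would apply a further Calder\'on decomposition $\psi_j*f=\sum_k\varphi_{k+j}*(\varphi_{k+j}*\psi_j*f)$ so that each summand $g_k$ has Fourier support in an annulus of scale $2^{k+j}$; Peetre's inequality then gives $\sup_{t>0}|\phi_t*g_k|(x)\lesa M(|g_k|^r)(x)^{1/r}$ for appropriate $r>0$, and choosing $r<\min(p,q)$ followed by the vector-valued Fefferman--Stein inequality on $L^{p/r}(\ell^{q/r})$ closes the estimate. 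The case $p=\infty$ of the $\F$-statement is handled by the cube-localization method of \cite{Bui2000}.

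\emph{Principal obstacle.} The delicate point is the continuity step: because $\psi$ is not Schwartz, almost-orthogonality for the Calder\'on sum must be extracted directly from the hypotheses on $\widehat\psi$ (combined with (\textbf{C2}) to tame the low-frequency tail), and the polynomial correction in the reproducing formula must be carefully reconciled with the pointwise identification of $\psi_j*f$. The $H^p\to L^p$ upgrade is comparatively routine once the continuity and Fourier-support properties are in hand.
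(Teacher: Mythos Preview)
Your plan contains two genuine gaps that the paper's machinery is specifically designed to avoid.

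\textbf{Continuity step.} The assertion that ``$(1+|\cdot|)^\ell\psi\in L^1$ combined with the Tauberian condition (\textbf{C2}) controls $k\ll j$'' is incorrect: the Tauberian condition is a nonvanishing statement and supplies no decay whatsoever for $\psi_j*\q_k$ when $k-j\to-\infty$. Without (\textbf{C1}) (which is not assumed here), your two-sided Calder\'on sum $\sum_{k\in\ZZ}(\psi_j*\q_k)*(\q_k*f)$ has no summability in the low-frequency tail, and the polynomial correction you invoke cannot be ``absorbed by the growth-$\ell$ structure of $f$'' without further argument. The paper circumvents this by using only the \emph{one-sided} identity $\psi_k*f=\phi_k*\psi_k*f+\sum_{a>k}\q_a*\q_a*\psi_k*f$ (Proposition~\ref{prop - convolution is a function}): the single $\phi_k$-term handles all low frequencies, and a closed-graph argument (Lemma~\ref{lem - smoothness of distributions}) gives the uniform bound $|\phi_k*f(x)|\lesa 2^{|k|\beta}(1+|x|)^\ell$ for some $\beta=\beta(f)$. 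Then the rapid decrease of $\p^\kappa\widehat\psi$ makes the sum over $a>k$ converge locally uniformly.

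\textbf{$H^p\to L^p$ upgrade.} For $p\les 1$ you are asking for $\|\psi_j*f\|_{H^p}\lesa\|\psi_j*f\|_{L^p}$, which is false in general since $\psi_j*f$ has no prescribed Fourier support. Your proposed remedy, decomposing $\psi_j*f=\sum_k\q_{k+j}*\q_{k+j}*\psi_j*f$ and applying Peetre--Fefferman--Stein termwise, again fails at the low-frequency end: $\|\q_{k+j}*\q_{k+j}\|_{L^1}\approx 1$ uniformly in $k$, so the terms with $k\ll0$ contribute with no decay and the $k$-sum diverges. The paper does \emph{not} go through Theorem~\ref{thm - sufficient hardy} at all; it instead invokes Theorem~\ref{thm - sufficient Lp assuming maximal func finite}, whose engine is the Str\"omberg--Torchinsky maximal inequality (Theorem~\ref{thm - Stromberg} and Corollary~\ref{cor - max ineq suff}) giving
\[
\big(\q_j^*f(x)\big)^r\lesa\sum_{k\gtrsim j}2^{(j-k)(\beta-\lambda)r}\int_{\RR^n}\frac{|\psi_k*f(x-y)|^r}{(1+2^k|y|)^{\lambda r}}\,2^{kn}\,dy.
\]
This is a \emph{one-sided} bound ($k\gtrsim j$ only), so no cancellation hypothesis is needed; the required a~priori finiteness $M_{\ell,m}(x,j)<\infty$ comes for free from Proposition~\ref{prop - convolution is a function}, and Fefferman--Stein then closes the $\ell^q(L^p)$ estimate directly.
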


\begin{remark} (i) It is possible to reduce the smoothness assumption slightly; see Theorem \ref{thm - sufficient Lp assuming maximal func finite} and the proof of Theorem \ref{thm - sufficient with rapidly decreasing} in Section \ref{sec - characterisation thms}. In particular, the smoothness condition (\textbf{C3}) can be replaced with the marginally weaker assumption that $\p^\kappa \widehat{\psi}$ is rapidly decreasing for $|\kappa| \les \max\{ n+1+[\ell], [\Lambda]\}$. This is a fairly strong condition on the kernel $\psi$, but a condition of this type seems to be necessary in order for the convolution $\psi * f$ to have a pointwise definition for every $f \in \s'$ of growth $\ell$, see Remark \ref{remark sharpness of rapidly decreasing assumption}  below. On the other hand, if we instead make further assumptions on the distribution $f$, then it is possible to define the convolution $\psi*f$ without the rapidly decreasing assumption on $\p^\kappa \widehat{\psi}$;  see Theorem \ref{thm - sufficient with f slowly increasing} for a result in this direction.

(ii)  In the above two theorems, we have restricted the class of distributions to those of growth $\ell$. This condition is natural in light of Theorem \ref{thm - main thm necessary cond} where it was shown that all elements of $\B$ and $\F$ are, perhaps modulo a polynomial, distributions of growth $ \alpha - \frac{n}{p} + \epsilon$ for every $\epsilon>0$. Thus we do not lose anything by only considering distributions of some finite growth. Observe also that by making $\ell$ smaller, we weaken the condition on $\psi$, but unfortunately require a \emph{stronger}  growth condition on $f$. A good choice for $\ell$, which is suggested by the necessary results, is to take $\ell>\alpha-n/p$.
\end{remark}

Theorem \ref{thm - main thm necessary cond},  Theorem \ref{thm - sufficient hardy} and Theorem \ref{thm - sufficient with rapidly decreasing} provide necessary and sufficient conditions for a distribution to be in the Besov-Lipschitz space or in the Triebel-Lizorkin space.  In other words, these theorems provide the characterisations of the function spaces under study.  \\

We now come to our main application of the previous results. Namely we give a characterisation of $\B$ and $\F$ via fractional derivatives of the Poisson kernel. Thus we consider the case $\widehat{\psi}(\xi)=|\xi|^\beta e^{-|\xi|}$; that is, $\psi = (-\Delta)^{\beta/2}P$, and $P$  is the Poisson kernel on $\RR^n$,
$$ P(x) = \frac{c_n}{(1+|x|^2)^{(n+1)/2}}\,.$$
 Note that the Poisson kernel case is one of the main motivations for this work.

\begin{theorem}\label{Poisson characterisation}
Let $\alpha \in \RR$, $0<p, q \les \infty$. Let $\beta \g 0$, $\beta>\alpha$, and define $\widehat{\psi}(\xi) = |\xi|^\beta e^{-|\xi|}$. Let $\ell \g 0$ such that
    $$ \alpha - \frac{n}{p} < \ell < \begin{cases} \beta + 1 \qquad &\frac{\beta}{2} \in \NN \\
                                                    \beta &\frac{\beta}{2} \not \in \NN. \end{cases}$$
Assume that $f$ is a distribution of growth  $\ell$. Then the convolution $\psi_j*f$ is a continuous function, and there exists a polynomial $\rho$ of degree at most $[\ell]$ such that
         $$\Big( \sum_{j \in \ZZ} \big( 2^{j \alpha} \| \psi_j^*(f-\rho)\|_{L^p} \big)^q \Big)^{\frac{1}{q}} \lesa \| f\|_{\B} \lesa \Big( \sum_{j \in \ZZ} \big( 2^{j \alpha} \| \psi_j *f\|_{L^p} \big)^q \Big)^{\frac{1}{q}},$$
          $$\Big\| \Big( \sum_{j \in \ZZ} \big( 2^{j\alpha} \psi_j^*(f-\rho)\big)^q \Big)^{\frac{1}{q}} \Big\|_{L^p}\lesa \| f\|_{\F} \lesa \Big\| \Big( \sum_{j \in \ZZ} \big( 2^{j\alpha} |\psi_j*f|\big)^q \Big)^{\frac{1}{q}} \Big\|_{L^p}, \quad p<\infty, $$
and
\begin{align*}
\sup_Q \bigg(\frac{1}{|Q|} \int_Q \sum_{j \g - \ell(Q)} \big( 2^{ j \alpha} &\psi^*_j(f-\rho)(x) \big)^q dx\bigg)^\frac{1}{q} \\
&\lesa
\| f \|_{\dot{F}^\alpha_{\infty, q}}
\lesa \sup_{Q} \bigg( \frac{1}{|Q|} \int_Q \sum_{j \g - \ell(Q)} \big( 2^{j \alpha} |\psi_j*f(x)|\big)^q dx \bigg)^\frac{1}{q}.
\end{align*}
Moreover, in the case $\ell < \beta$, we may take $\rho = 0$.
\end{theorem}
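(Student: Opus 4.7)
The plan is to verify that the kernel $\widehat{\psi}(\xi) = |\xi|^\beta e^{-|\xi|}$ satisfies the hypotheses of Theorems \ref{thm - main thm necessary cond} and \ref{thm - sufficient with rapidly decreasing}, and then to read off the three pairs of inequalities. The Tauberian condition (\textbf{C2}) is immediate since $|\widehat{\psi}(t\xi)| = t^\beta |\xi|^\beta e^{-t|\xi|} > 0$ for every $t>0$ and $\xi \neq 0$. The factor $e^{-|\xi|}$ makes $\widehat{\psi}$ and all of its derivatives on $\RR^n \setminus \{0\}$ decay faster than any polynomial at infinity, so the smoothness condition (\textbf{C3}) holds for every $m \in \RR$ and $\Lambda \g 0$.

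For the cancellation condition (\textbf{C1}) I would expand
\begin{equation*}
\widehat{\psi}(\xi) = \sum_{k \g 0} \frac{(-1)^k}{k!}\, |\xi|^{\beta+k}
\end{equation*}
and differentiate term by term. When $\beta/2 \notin \NN$ the leading singular term is $|\xi|^\beta$ and (\textbf{C1}) holds with any $r < \beta$; when $\beta/2 \in \NN$ the term $|\xi|^\beta$ is already a polynomial and the first genuinely singular contribution is $|\xi|^{\beta+1}$, which gives (\textbf{C1}) with any $r < \beta + 1$. Since $\beta > \alpha$ and $\ell$ lies strictly below the appropriate threshold, one may pick $r > \max\{\alpha, \ell\}$ in both cases. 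The same analysis of $\widehat{\psi}$ at the origin, combined with standard Fourier-asymptotic arguments, yields spatial decay $\psi(x) = \Oh(|x|^{-n-\beta})$ (respectively $\Oh(|x|^{-n-\beta-1})$), which is precisely what is needed for $(1+|\cdot|)^\ell\psi \in L^1$ in the stated range of $\ell$.

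With the hypotheses verified, Theorem \ref{thm - sufficient with rapidly decreasing} immediately gives the continuity of each $\psi_j*f$ together with the three right-hand inequalities, while Theorem \ref{thm - main thm necessary cond} supplies a polynomial $\rho$ and the corresponding left-hand Peetre maximal function estimates. The bound $\deg \rho \les [\ell]$ is a direct consequence of the construction in the necessary theorem: $\rho$ is chosen precisely to absorb the polynomial growth of order $\ell$ that is allowed in an arbitrary representative of $f \in \s'/\mathcal{P}$.

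The remaining point, and the expected main obstacle, is the assertion that when $\ell < \beta$ we may take $\rho = 0$. In this regime (\textbf{C1}) holds with some $r$ satisfying $[\ell] < r$, and the integrability $(1+|\cdot|)^\ell \psi \in L^1$ allows one to differentiate $\widehat{\psi}$ under the integral sign up to order $[\ell]$, yielding
\begin{equation*}
\p^\kappa \widehat{\psi}(0) = (-i)^{|\kappa|} \int_{\RR^n} x^\kappa \psi(x)\,dx = 0 \qquad \text{for every }|\kappa| \les [\ell].
\end{equation*}
Writing any polynomial of degree $\les [\ell]$ as a linear combination of monomials, these cancellations give $\psi_j * \rho \equiv 0$ for all $j$, so $\psi_j * f = \psi_j*(f-\rho)$ and similarly for $\psi_j^*$, which lets us substitute $f$ for $f-\rho$ throughout the left-hand estimates. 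I note that in the complementary sub-case $\beta/2 \in \NN$ and $\beta \les \ell < \beta+1$, the derivative $\p^\kappa \widehat{\psi}(0)$ fails to vanish for some $|\kappa| = \beta$ owing to the polynomial summand $|\xi|^\beta$ in the expansion, which is precisely why $\rho$ cannot be eliminated there.
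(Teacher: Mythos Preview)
Your proof follows the same route as the paper: verify (\textbf{C1})--(\textbf{C3}) and the integrability $(1+|\cdot|)^\ell\psi\in L^1$, apply Theorems \ref{thm - main thm necessary cond} and \ref{thm - sufficient with rapidly decreasing}, and then use vanishing moments to eliminate $\rho$ when $\ell<\beta$.

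One correction is needed. Your claim that (\textbf{C1}) holds with any $r<\beta+1$ when $\beta/2\in\NN$ is false: the polynomial $|\xi|^\beta$ has nonzero derivatives of order exactly $\beta$ at the origin, so $\p^\kappa\widehat\psi(0)\ne0$ for some $|\kappa|=\beta$ and (\textbf{C1}) holds only with $r\le\beta$. This does not damage the argument, since Theorem \ref{thm - main thm necessary cond} requires merely $r>\alpha$ and $r=\beta$ suffices; the integrability in this sub-case comes instead from your separate (correct) decay assertion $\psi(x)=\Oh(|x|^{-n-\beta-1})$, which the paper obtains directly by writing $\psi=(-\Delta)^{\beta/2}P$. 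Finally, the paper's justification for $\deg\rho\le[\ell]$ is slightly sharper than your ``construction'' remark: since both $f$ (by hypothesis) and $f-\rho$ (by Theorem \ref{thm - main thm necessary cond}) are of growth $\ell$, their difference $\rho$ is a polynomial of growth $\ell$, hence of degree at most $[\ell]$.
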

\begin{proof} 
It is obvious that $\psi$ satisfies all the assumptions in Theorem \ref{thm - main thm necessary cond},  Theorem \ref{thm - sufficient hardy} and Theorem \ref{thm - sufficient with rapidly decreasing}, except possibly the integrability condition $(1+|\cdot|)^\ell \psi \in L^1$. But this follows readily from Corollary \ref{Poisson kernel - integrability} in the case $\frac{\beta}{2} \not \in \NN$, and in the case $\frac{\beta}{2} \in \NN$ by observing that $\psi = (-\Delta)^{\frac{\beta}{2}} P$.  Thus we obtain the required inequalities, up to perhaps a polynomial factor $\rho$ in the case of the left hand estimates. However as $f$ and $f-\rho$ are of growth $\ell$, we see that $\rho$ must be of degree at most $[\ell]$. The final conclusion follows by noting that if $\ell< \beta$, then Corollary \ref{cor - fourier decay at origin implies spatial decay} implies that $\psi$ has $[\ell]$ vanishing moments. Therefore $\psi*(f-\rho) = \psi*f$ and hence we may take $\rho  =0$.
\end{proof}

\begin{remark}(i) A similar argument shows that Theorem \ref{Poisson characterisation} holds when
$\widehat{\psi} = |\cdot|^\beta \widehat{\phi}$, where $\phi \in \s$ satisfies the Tauberian condition. The proof is similar to the Poisson kernel case.

(ii) When $\beta=m\in \NN$ in the above theorem, one has
$\psi_t*f = \big(\big(\p/\p t)^mP_t)*f$ (= $\big(\p/\p t\big)^m(P_t*f)$ if $P_t*f$ is defined). This case is historically important as the Poisson kernel was a principal tool used in the classical study of function spaces, in which the mean-value property of the harmonic function $P_t*f$ is crucial. In fact, the sufficient result for the Poisson kernel case (the right-hand side inequalities in the above theorem) has only been proved in the literature using the mean-value property (see \cite{Bui1984}, \cite{Triebel1988}). Moreover, the question of defining the convolution $\psi_t*f$ was not fully elaborated in these works. Also Theorem \ref{Poisson characterisation} for non-integer $\beta$ appears to be new.
\end{remark}

\begin{remark}\label{rem - mod polynomials not needed} This remark relates to a question and some results in the recent manuscript \cite{Triebel2015}.  The first-named author is grateful to Professor Hans Triebel for sending him a copy of this work.

 (i) Fix $0<p, q \les \infty$, $\alpha \in \RR$,  and $\ell \g 0$ such that $\alpha - \frac{n}{p} < \ell < \max([\alpha - \frac{n}{p}],0)+1$. Let $\psi \in L^1$ be a kernel satisfying the conditions of Theorem \ref{thm - main thm necessary cond} and Theorem \ref{thm - sufficient with rapidly decreasing}. Given a distribution $f \in \s'$ of growth $\ell$, we define
    $$ \| f \|_{\dot{B}^\alpha_{p, q}(\psi)} =  \left( \sum_{j \in \ZZ} \left( 2^{j\alpha} \| \psi_j * f \|_{L^p} \right)^q \right)^\frac{1}{q}$$
and take
    $$ \dot{B}^\alpha_{p, q}(\psi) = \big\{ f \in \s' \, \big| \, f \text{ of growth $\ell$ and $\| f \|_{\dot{B}^\alpha_{p, q}(\psi)}<\infty$}\big\}.$$
Theorem \ref{thm - sufficient with rapidly decreasing} shows that $\| f \|_{\dot{B}^\alpha_{p, q}(\psi)}$ is well-defined.  Moreover, together with an application of Theorem \ref{thm - main thm necessary cond}, the (quasi) Banach space $\dot{B}^\alpha_{p, q}(\psi)/( \dot{B}^\alpha_{p, q}(\psi) \cap \mc{P}) $ is isomorphic to $\dot{B}^\alpha_{p, q}$ with equivalent norms (here $\mc{P}$ denote the set of all polynomials). Thus we have a complete characterisation of $\dot{B}^\alpha_{p, q}$, provided we consider elements of $\dot{B}^\alpha_{p, q}(\psi)$ modulo polynomials in $\dot{B}^\alpha_{p, q}(\psi) \cap \mc{P}$. Because of the restriction on the growth of distributions in $ \dot{B}^\alpha_{p, q}(\psi)$,
we need only consider polynomials of degree at most $\max([\alpha-\frac{n}{p}],0)$. Note that, although we restrict our discussion above to the Besov-Lipschitz spaces, appropriate versions hold for the Triebel-Lizorkin spaces.

(ii) In light of Theorem \ref{thm - sufficient hardy}, we may replace the $L^p$ norm in the definition of $\| \cdot \|_{\dot{B}^\alpha_{p, q}(\psi)}$ by the $H^p$ norm (and thus weaken the conditions on $\psi$) and get an equivalent statement to (i). Similarly,  we may replace $\psi_j*f$ with the maximal function $\psi^*_j f$. Moreover, again via Theorem \ref{thm - main thm necessary cond}, Theorem \ref{thm - sufficient hardy}, and Theorem \ref{thm - sufficient with rapidly decreasing}, clearly an equivalent statement is also true for the Triebel-Lizorkin spaces $\dot{F}^\alpha_{p, q}$.

(iii) The fact that the spaces $\dot{B}^\alpha_{p, q}$ (and $\dot{B}^\alpha_{p, q}(\psi)$) are only Banach spaces when considered modulo polynomials can be somewhat inconvenient. However, in the case $\alpha<\frac{n}{p}$, it is possible to remove this ambiguity. One way to do this is as follows. Let
    $$\mc{Z} = \Big\{ f \in \s' \, \Big| \, f = \sum_{j \in \ZZ} \q_j * \q_j *f \,
\; \text{in $\s'$} \Big\}. $$
Thus $\mc{Z}$ is the collection of all distributions for which the Calder\'{o}n reproducing formula holds in $\s'$. (For general distributions, the Calder\'{o}n reproducing formula only holds in $\s'/\mc{P}$; see for instance Theorem \ref{thm - classical calderon on s'}  and Theorem \ref{thm - classical calderon on B} below.) Consider the set $\dot{B}^\alpha_{p, q}(\psi) \cap \mc{Z}  \subset \s'$ (here elements \emph{are not} considered modulo polynomials) and define the map $\Phi: \dot{B}^\alpha_{p, q}(\psi) \cap \mc{Z} \rightarrow \dot{B}^\alpha_{p, q}$ by $\Phi(f) = f + \mc{P}$ (i.e. $\Phi$ maps each distribution in $\dot{B}^\alpha_{p, q}(\psi) \cap \mc{Z}$ to its equivalence class in $\dot{B}^\alpha_{p, q}$). Then \begin{enumerate}
  \item[(a)] $\dot{B}^\alpha_{p, q}(\psi) \cap \mc{Z}$ is a Banach space (as a subset of $\s'$), and the map $\Phi$ is injective with
                        $$ \| f \|_{\dot{B}^\alpha_{p, q}(\psi)} \lesa \| \Phi(f) \|_{\dot{B}^\alpha_{p, q}} \lesa \| f \|_{\dot{B}^\alpha_{p, q}(\psi)};$$

  \item[(b)] if $\alpha<\frac{n}{p}$ then, in addition, the map $\Phi$ is a bijection.
\end{enumerate}
As in (i), the conclusion (a) follows immediately from Theorem \ref{thm - main thm necessary cond} and Theorem \ref{thm - sufficient with rapidly decreasing}. The key point in (b), is that given any $f \in \dot{B}^\alpha_{p, q}$, there exists $g \in f + \mc{P}$ such that $g \in \mc{Z}$  (see Theorem \ref{thm - classical calderon on B} below). Thus in conclusion, if one wishes to consider $\dot{B}^\alpha_{p, q}$ \emph{without} working modulo polynomials, a natural choice is the Banach space $\dot{B}^\alpha_{p, q}(\psi) \cap \mc{Z}$, and this space is equivalent to $\dot{B}^\alpha_{p, q}$ provided $\alpha < \frac{n}{p}$. As in (i) and (ii), a similar argument applies in the case of $\dot{F}^\alpha_{p, q}$.

(iv) Let $\widehat{\psi}(\xi) = e^{-|\xi|}$ denote the Poisson kernel. If $\alpha <0$, we have $ \dot{B}^\alpha_{p, q}(\psi) = \dot{B}^\alpha_{p, q}(\psi) \cap \mc{Z}$ and thus we can use the norm
        \begin{equation}\label{eqn - psi norm besov} \left( \sum_{j \in \ZZ} \left( \| \psi_j * f \|_{L^p}\right)^q \right)^{\frac{1}{q}}\end{equation}
to define $\dot{B}^\alpha_{p, q}$ \emph{without} needing to work modulo polynomials (after applying Theorem \ref{Poisson characterisation}, this just reduces to showing that if $c \in \RR$ and $\| c \|_{\dot{B}^\alpha_{p, q}(\psi)}<\infty$, then $c=0$). More generally, a similar comment applies whenever $\alpha<0$ and we take any kernel with $\int \psi \not = 0$ (provided of course that $\psi$ satisfies the conditions in Theorem \ref{thm - main thm necessary cond} and Theorem \ref{thm - sufficient with rapidly decreasing}). In the case $\alpha \g 0$, this approach does not work as the kernel $\psi$ is required to have some vanishing moments, thus (\ref{eqn - psi norm besov}) being finite does \emph{not} imply that $f \in \mc{Z}$. As previously, an identical argument can also be used in the case of  $\dot{F}^\alpha_{p, q}$.

(v) This, rather lengthy, remark should be compared with the results in the recent work of Triebel \cite{Triebel2015} where, among other things, the problem of defining homogeneous function spaces in the framework of $\s$ and $\s'$ (as opposed to $\s'/\mc{P}$) is considered. In particular,  \cite{Triebel2015} contains a special case of (iv) with $\widehat{\psi} = e^{-|\xi|^2}$ being the Gaussian kernel, and moreover, in the case $\max\{ \frac{n}{p} - n, 0\}<\alpha < \frac{n}{p}$, the spaces $\dot{B}^\alpha_{p, q}(\psi) \cap L^r$ (with the additional restriction $q\les r$) and $\dot{F}^\alpha_{p, q}(\psi) \cap L^r$ with $\alpha = n ( \frac{1}{p} - \frac{1}{r})$, are introduced (with $\psi$ being derivatives of the Gaussian kernel) as suitable substitutes for $\dot{B}^\alpha_{p, q}$ and $\dot{F}^\alpha_{p, q}$. As in (iii), the idea being that one can work \emph{directly} in the framework $\s$ and $\s'$ without needing to worry about $\s'/\mc{P}$. An advantage of (iii) over the results in \cite{Triebel2015} is that one can deal with the whole range $\alpha < \frac{n}{p}$, $0<p, q \les \infty$ for both $\dot{B}^\alpha_{p, q}$ \emph{and} $\dot{F}^\alpha_{p, q}$. Moreover,  when $\alpha \g n/p$,
the results in part (i) of our remark offer an approach to study these function spaces modulo polynomials of degree at most $[\alpha - n/p]$.

\end{remark}
While the general outline of our arguments follows the original works \cite{Bui1996,Bui1997,Bui2000} and, in the necessary part, also the pioneering paper \cite{Peetre1975} by Peetre, the non-smooth assumption on the kernel $\psi$  and its the Fourier transform requires not only substantial technical modifications, but also the introduction of a new concept, the distributions of finite growth. We also benefit from the thesis \cite{Candy2008} where some partial results are obtained.
Of independent interest is our extension of the Calder\'on reproducing formula and the Str\"omberg-Torchinsky estimate in \cite{Stromberg1989} to the non-smooth case. These could be useful in other research in harmonic analysis of function spaces. \\

The plan for the rest of the paper is as follows. In Section \ref{sec - Prelim results} we prove a number of estimates that are used frequently  throughout the paper, in particular we state a growth estimate on elements of $\dot{B}^\alpha_{\infty, \infty}$. Section \ref{sec - pointwise defn of conv} is devoted to the problem of the pointwise definition of the convolution. Section \ref{sec - maximal ineq} is the main part of the paper, where the necessary tools are developed to prove Theorem \ref{thm - sufficient hardy} and Theorem \ref{thm - sufficient with rapidly decreasing}. Section \ref{sec - characterisation thms} contains the proofs of our main theorems. In Section \ref{sec - appendix} we give the proofs of the results in Section \ref{sec - Prelim results}, namely we prove two versions of the Calder\'on reproducing formulas on $\s'$ and use these to deduce growth rate for distributions in the Besov-Lipschitz spaces.\\

We conclude the introduction by a remark. All the main results presented in this paper have continuous versions, in which the sum is replaced by the integral with respect to the dilation variable $t>0$, and the kernel function satisfies the ``standard'' Tauberian condition (see \cite{Bui1996}). We leave the precise formulation as well as modification of the proofs to the interested reader, but note that details in the smooth kernel case can be found in \cite{Bui1996,Bui1997,Bui2000}. Moreover, versions of our results should also hold in the weighted case, where the parameter $\lambda$ in the Peetre maximal function will depend on the weight function $w$. Again we refer to the above cited works for a treatment in the smooth kernel case.\\

An earlier version of this paper was presented at the 2008 Australia-New Zealand Mathematics Convention \cite{Bui2008}.

\section{Preliminary Results}\label{sec - Prelim results}

We begin by recalling two versions of the Calder\'{o}n reproducing formula. These two theorems are classic results that were first used in the study of the  homogeneous Besov-Lipschitz spaces by Peetre \cite{Peetre1976}. (A continous version of Theorem \ref{thm - classical calderon on s'} was attributed to A.P. Calder\'on by the authors of \cite{Janson1981}.) We collect the proof of the two theorems below in the appendix for easy reference (see Subsection \ref{subsec - proof of classical calderon form} below). It is worth noting that our proofs are carried out in the spatial space and are different from \cite{Peetre1976}  (where it was done in the frequency domain). Moreover, our argument gives an explicit definition of the sequence of polynomials $(p_N)$ appearing in the theorems below; see equation (\ref{eqn - thm classical cald on s' - defn of poly}).

\begin{theorem}[Calder\'on Formula on $\s'$]\label{thm - classical calderon on s'}
 Let $f \in \s'$. Then there exists a sequence of polynomials $(p_N)$ such that
        $$ f= \lim_{N \rightarrow - \infty} \Big( p_N + \sum_{j = N + 1}^\infty \q_j * \q_j * f \Big)$$
 with convergence in $\s'$.
\end{theorem}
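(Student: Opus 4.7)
The plan is to rewrite each partial sum as a Fourier multiplier, then use a Taylor expansion at the origin to isolate the polynomial correction. Define the cumulative cutoff
$$\Psi_N(\xi) := \sum_{j = N+1}^\infty \widehat{\q}(2^{-j}\xi)^2.$$
The support condition on $\widehat{\q}$ and (\ref{eqn - intro sum cond on q}) guarantee that $\Psi_N$ is smooth, vanishes for $|\xi| \les 2^N$, equals $1$ for $|\xi| \g 2^{N+2}$, and satisfies $|\p^\gamma \Psi_N(\xi)| \lesa |\xi|^{-|\gamma|}$ on its support. First I would verify that $g_N := \sum_{j \g N+1} \q_j * \q_j * f$ is a well-defined element of $\s'$. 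For $\phi \in \s$, the duality $(\q_j*\q_j*f)(\phi) = f(\tilde\q_j*\tilde\q_j*\phi)$ (with $\tilde\q(x)=\q(-x)$) identifies the partial sum with $f$ applied to a Schwartz function whose Fourier transform is a truncation of $\Psi_N(-\cdot)\,\widehat{\phi}$. As the truncation parameter tends to $\infty$ this converges to $\Psi_N(-\cdot)\widehat\phi$ in $\s$, and one obtains $g_N(\phi) = f\big((\Psi_N(-\cdot)\widehat\phi)^\vee\big)$.

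Next I would exploit that $f \in \s'$ has some finite order: there is $K \in \NN$ (depending on $f$) such that $|f(\psi)| \lesa \sum_{|\alpha|, |\beta| \les K} \|x^\alpha \p^\beta \psi\|_{L^\infty}$ for all $\psi \in \s$. Fix any integer $M \g K$, and for $\phi \in \s$ use the Taylor expansion of $\widehat\phi$ at the origin,
$$\widehat\phi(\xi) = T_M(\xi) + R_M(\xi), \qquad T_M(\xi) := \sum_{|\kappa|\les M} \frac{\p^\kappa \widehat{\phi}(0)}{\kappa!}\, \xi^\kappa,$$
to split
$$(f - g_N)(\phi) = f\big(((1-\Psi_N(-\cdot))\widehat\phi)^\vee\big) = p_N(\phi) + f\big(((1-\Psi_N(-\cdot))R_M)^\vee\big),$$
where $p_N(\phi) := f\big(((1-\Psi_N(-\cdot)) T_M)^\vee\big)$. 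Since $\p^\kappa \widehat\phi(0)$ is a constant multiple of $\int x^\kappa \phi(x)\,dx$, the functional $p_N$ is integration against a polynomial in $x$ of degree at most $M$ whose coefficients depend on $N$ but not on $\phi$; thus $p_N$ is the required polynomial correction.

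The heart of the proof is showing $F_N := ((1-\Psi_N(-\cdot)) R_M)^\vee \to 0$ in $\s$ as $N \to -\infty$, whence $f(F_N) \to 0$ by the order bound. Starting from the standard estimate $\|x^\alpha \p^\beta F_N\|_{L^\infty} \lesa \|\p^\alpha[\xi^\beta(1-\Psi_N(-\cdot)) R_M]\|_{L^1}$, Leibniz's rule together with the support constraint $|\xi| \les 2^{N+2}$, the derivative bound $|\p^\gamma \Psi_N(-\cdot)| \lesa 2^{-N|\gamma|}$, and the Taylor remainder estimate $|\p^{\alpha_3}R_M(\xi)| \lesa_\phi |\xi|^{M+1-|\alpha_3|}$ combine to yield
$$\|x^\alpha \p^\beta F_N\|_{L^\infty} \lesa_\phi 2^{N(M + 1 + n + |\beta| - |\alpha|)}.$$
For $|\alpha|, |\beta| \les K$ and $M \g K$, the exponent is strictly positive, so the bound vanishes as $N \to -\infty$. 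The main obstacle is precisely this last estimate: derivatives of $\Psi_N$ blow up as $N \to -\infty$, and one must ensure that the vanishing of $R_M$ of order $M+1$ at the origin together with the shrinking support of $1 - \Psi_N$ compensates. Choosing the Taylor degree $M$ larger than the order $K$ of $f$ is what makes this work, and it clarifies why the polynomial correction $p_N$ depending on $f$ is unavoidable in general.
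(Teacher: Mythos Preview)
Your proof is correct. Both your argument and the paper's identify the obstruction to convergence as the low-frequency piece (the multiplier $1-\Psi_N$, or equivalently $\phi_N*f$ in the paper's notation) and remove it by subtracting a Taylor polynomial, but the two expansions live on opposite sides of the Fourier transform. The paper introduces $\phi\in\s$ with $\widehat{\phi}(2^{-N}\cdot)=1-\Psi_N$, writes the partial sum as $\phi_M*f-\phi_N*f$, and Taylor-expands the smooth function $\phi_N*f(x)$ in the \emph{spatial} variable about $x=0$; the polynomial is then $p_N(x)=\sum_{|\kappa|\les a-n}\frac{\p^\kappa(\phi_N*f)(0)}{\kappa!}x^\kappa$, and the order bound on $f$ gives $|\phi_N*f(x)-p_N(x)|\lesa(1+|x|)^{2a+1-n}2^N$. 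You instead Taylor-expand the Fourier transform of the \emph{test function} about $\xi=0$ and push the estimate through Schwartz seminorms via a single Leibniz computation. Your approach is closer to the frequency-domain argument the paper attributes to Peetre, while the paper's spatial route yields a more concrete description of $p_N$ as the jet of $\phi_N*f$ at the origin --- a form it then reuses in the refined Theorem~\ref{thm - classical calderon on B}. The essential mechanism, that the high-order vanishing of the remainder together with the shrinking support of $1-\Psi_N$ beats the blow-up of $\p^\gamma\Psi_N$, is the same in both.
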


We can deduce a more refined version if we make the additional assumption that $f \in \dot{B}^\alpha_{\infty, \infty}$.

\begin{theorem}[Calder\'on Formula on $\dot{B}^\alpha_{\infty, \infty}$]\label{thm - classical calderon on B}
Let $\alpha \in \RR$ and $ f\in \dot{B}^\alpha_{\infty, \infty}$. Then there exist polynomials $p$, $p_N$ such that\footnote{If $\alpha<0$ this statement is vacuous, and we simply have $p_N=0$.} $\deg(p_N)\les[\alpha]$ and
        $$ f-p = \lim_{N\rightarrow - \infty}\bigg( p_N +  \sum_{j=N+1}^\infty \q_j * \q_j*f \bigg)$$
with convergence in $\s'$. Moreover, given any $\rho \in \s$ we have the inequality
        $$\sup_{N<0\les M} \bigg| \rho * \bigg( p_N + \sum_{j=N+1}^M \q_j * \q_j * f\bigg)(x) \bigg|\lesa 1 + \begin{cases}
          |x|^{\max\{0, \alpha\}}  \qquad &\alpha \not \in \NN \\
          |x|^\alpha \log(|x|) &\alpha \in \NN.
        \end{cases}$$
\end{theorem}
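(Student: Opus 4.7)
My plan is to build the polynomials $p_N$ explicitly as truncated Taylor sums tailored to the Besov regularity, verify convergence of the resulting partial sums by hand, and then read off the growth estimate from the same Taylor bounds.

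First, since $\widehat{\q}$ is supported in $\{|\xi|\in[1/2,2]\}$, each $G_j:=\q_j*\q_j*f$ is a smooth, band-limited function with $\|G_j\|_{L^\infty}\lesa\|\q_j*f\|_{L^\infty}\lesa 2^{-j\alpha}$, and writing $\p^\kappa G_j=(\p^\kappa\q_j)*\q_j*f$ gives the Bernstein-type estimate
\[
\|\p^\kappa G_j\|_{L^\infty}\lesa 2^{j(|\kappa|-\alpha)},\qquad \kappa\in\NN^n.
\]
I would then take $P_j$ to be the degree-$[\alpha]$ Taylor polynomial of $G_j$ at $0$ when $\alpha\g 0$ (and $P_j:=0$ when $\alpha<0$), and set $p_N:=-\sum_{j=N+1}^{-1}P_j$ for $N<0$; by construction $\deg p_N\les[\alpha]$.

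For convergence I would decompose
\[
p_N+\sum_{j=N+1}^M G_j=\sum_{j=N+1}^{\min(M,-1)}(G_j-P_j)+\sum_{j=\max(0,N+1)}^M G_j.
\]
The high-frequency block converges absolutely in $\s'$: for every $K\g 0$ and every $\rho\in\s$, the vanishing moments of $\q$ combined with the Schwartz decay of $\widehat\rho$ yield $\|\rho*\q_j\|_{L^1}\lesa 2^{-jK}$, hence $|\langle G_j,\rho\rangle|\lesa 2^{-j(K+\alpha)}$. For the low-frequency block, Taylor's theorem together with the Bernstein bound gives
\[
|G_j(x)-P_j(x)|\lesa 2^{-j\alpha}\min\bigl((2^j|x|)^{[\alpha]+1},\,1+(2^j|x|)^{[\alpha]}\bigr),\qquad j\les-1,
\]
and splitting $\sum_{j\les-1}$ at the threshold $j_0\sim-\log_2|x|$ yields the pointwise bound
\[
\sum_{j\les-1}|G_j(x)-P_j(x)|\lesa 1+\begin{cases}|x|^{\max\{0,\alpha\}}&\alpha\notin\NN,\\ |x|^\alpha\log(2+|x|)&\alpha\in\NN,\end{cases}
\]
the logarithm appearing precisely because $[\alpha]-\alpha=0$ makes the intermediate geometric sum degenerate in the integer case. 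Thus the low-frequency series converges locally uniformly, and hence in $\s'$; let $g$ denote the resulting limit.

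To identify $f-g$ as a polynomial, I would test against any $\phi\in\s$ whose Fourier transform vanishes to infinite order at $0$. For such $\phi$, the Calder\'on expansion $\phi=\sum_j\q_j*\q_j*\phi$ converges absolutely in $\s$, so $\langle f,\phi\rangle=\sum_j\langle G_j,\phi\rangle$, while each $\langle P_j,\phi\rangle$ vanishes because $\phi$ has all moments equal to zero. Hence $\langle f-g,\phi\rangle=0$ for every such $\phi$; this forces $\widehat{f-g}$ to be supported at the origin, and therefore $f-g=:p$ is a polynomial. Combined with the previous step this gives $f-p=\lim_{N\to-\infty}(p_N+\sum_{j=N+1}^\infty \q_j*\q_j*f)$ in $\s'$, matching Theorem \ref{thm - classical calderon on s'}.

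For the growth bound, convolving the decomposition of $A_{N,M}:=p_N+\sum_{j=N+1}^M G_j$ with $\rho\in\s$ splits it into the same high- and low-frequency blocks. The high-frequency block is controlled uniformly by $\sum_{j\g 0}\|\rho*G_j\|_{L^\infty}\lesa\sum 2^{-j(K+\alpha)}=O(1)$ for $K$ sufficiently large. For the low-frequency terms, the rapid decay of $\rho$ propagates the scaled pointwise estimate on $|G_j-P_j|$ through the convolution (replacing $|x|$ by $1+|x|$ up to multiplicative constants via the standard $\int(1+|x-y|)^{-K}|y|^{[\alpha]}\,dy\lesa(1+|x|)^{[\alpha]}$), so the same dyadic summation yields the claimed estimate uniformly in $N<0\les M$.

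The main obstacle I expect is the dyadic bookkeeping in the low-frequency pointwise estimate: one has to balance the two competing Taylor bounds precisely at the scale $2^j\sim 1/|x|$ to extract the sharp exponent $\max\{0,\alpha\}$, and the resonant integer case $\alpha\in\NN$ must be handled separately to capture the extra logarithm. The remaining ingredients are either direct applications of Theorem \ref{thm - classical calderon on s'} or routine manipulations of band-limited Schwartz kernels.
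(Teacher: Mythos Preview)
Your proposal is correct and follows essentially the same route as the paper: both subtract the degree-$[\alpha]$ Taylor polynomial from the low-frequency pieces and sum via the dyadic split at $2^j\sim 1/|x|$, the only cosmetic differences being that the paper packages the low frequencies through $\phi_N*f$ (so its $p_N$ is the Taylor polynomial of $\phi_N*f$, differing from yours by a fixed polynomial) and identifies $p$ via the Fourier support of the limit rather than by pairing against $\s_\infty$.
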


In the characterisation results presented in the current paper, we restrict our attention to distributions of finite growth. To see that this restriction is reasonable, we need to show that elements of $\dot{B}^\alpha_{p, q}$ and $\dot{F}^\alpha_{p, q}$ have growth of some finite order. This growth is a straight forward application of the bound in Theorem \ref{thm - classical calderon on B}.

\begin{corollary}[Growth of distributions in $\dot{B}^\alpha_{p, q}$ and $\dot{F}^\alpha_{p, q}$]\label{cor - growth bound on B and F}
Let $\alpha \in \RR$, $0<p, q \les \infty$. If $f \in \dot{B}^\alpha_{\infty, \infty}$ then there exists a polynomial $p$ such that for every $\rho \in \s$ we have
     $$ | \rho * (f-p)(x) | \lesa  1 + \begin{cases}
           |x|^\alpha \log{ |x| } \qquad &\alpha \in \NN \\
          |x|^{\max\{0, \alpha\}} &\alpha \not \in \NN.
        \end{cases}$$
Consequently, if $f \in \dot{B}^\alpha_{p, q}$ or $f \in \dot{F}^\alpha_{p, q}$, then there exists a polynomial $p$ such that $f-p$ is a distribution of growth $\ell$ for every $\ell > \alpha - \frac{n}{p}$ with $\ell\ge 0$.
\begin{proof}
  The growth bound on $\dot{B}^\alpha_{\infty, \infty}$ follows immediately from Theorem \ref{thm - classical calderon on B}. To conclude the proof, we simply recall the embedding $\dot{B}^{\alpha}_{p, q}, \dot{F}^\alpha_{p,q} \subset \dot{B}^{\alpha - \frac{n}{p}}_{\infty, \infty}$. (Note that $\dot{F}^\alpha_{\infty,q} \subset   \dot{B}^\alpha _{\infty, \infty}$ by an embedding theorem in \cite{Bui2000}.)
\end{proof}
\end{corollary}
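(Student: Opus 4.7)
The plan is to reduce everything to Theorem \ref{thm - classical calderon on B} and the standard Besov embedding. For the first assertion, given $f \in \dot{B}^\alpha_{\infty,\infty}$, I would invoke Theorem \ref{thm - classical calderon on B} to obtain polynomials $p$ and $p_N$ (with $\deg p_N \les [\alpha]$) for which the truncated Calder\'on reproducing sums $S_{N,M}:= p_N + \sum_{j=N+1}^M \q_j*\q_j*f$ converge to $f-p$ in $\s'$ as $N\to -\infty$, $M\to \infty$, together with the \emph{uniform} pointwise bound
$$ \sup_{N<0\les M} |\rho * S_{N,M}(x)| \lesa 1 + \begin{cases} |x|^{\max\{0,\alpha\}} & \alpha \notin \NN,\\ |x|^\alpha \log|x| & \alpha \in \NN,\end{cases}$$
for every $\rho\in\s$.

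The next step is simply to pass this bound to the limit. For fixed $x\in\RR^n$, observe that $\rho * g(x) = g(\widetilde{\rho}(\cdot-x))$ for $g\in\s'$, and $\widetilde{\rho}(\cdot-x)\in\s$, so convergence of $S_{N,M}$ to $f-p$ in $\s'$ gives pointwise convergence $\rho*S_{N,M}(x)\to \rho*(f-p)(x)$. Taking $N\to-\infty$ and $M\to\infty$ in the uniform bound above yields the claimed estimate for $\rho*(f-p)(x)$.

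For the second assertion, the main tool is the standard Besov embedding $\dot{B}^\alpha_{p,q} \hookrightarrow \dot{B}^{\alpha-n/p}_{\infty,\infty}$, together with its Triebel--Lizorkin counterpart $\dot{F}^\alpha_{p,q} \hookrightarrow \dot{B}^{\alpha-n/p}_{\infty,\infty}$ (for $p=\infty$, this requires the embedding $\dot{F}^\alpha_{\infty,q}\subset \dot{B}^\alpha_{\infty,\infty}$ from \cite{Bui2000}). Applying the first part with $\alpha$ replaced by $\alpha - n/p$ produces a polynomial $p$ and a bound on $\rho*(f-p)(x)$ of order at most $|x|^{\max\{0,\alpha-n/p\}}$, or $|x|^{\alpha-n/p}\log|x|$ in the borderline integer case. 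Since for any $\ell \g 0$ with $\ell > \alpha - n/p$ these expressions are $O(|x|^\ell)$ as $|x|\to\infty$, the distribution $f-p$ is of growth $\ell$ in the sense of Definition \ref{defn dist of finite growth}.

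The only real subtlety is the justification of the pointwise passage to the limit, but this is immediate once one recognises $\rho*g(x)$ as the action of $g$ on a Schwartz function and uses the uniformity of the bound supplied by Theorem \ref{thm - classical calderon on B}. No new estimates are needed; the corollary is essentially a bookkeeping consequence of the refined Calder\'on formula together with the embedding theorems.
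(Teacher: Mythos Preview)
Your proposal is correct and follows essentially the same approach as the paper: deduce the growth bound on $\dot{B}^\alpha_{\infty,\infty}$ from Theorem \ref{thm - classical calderon on B}, then invoke the embedding $\dot{B}^\alpha_{p,q}, \dot{F}^\alpha_{p,q} \subset \dot{B}^{\alpha-n/p}_{\infty,\infty}$. The only difference is that you make explicit the passage to the limit (via the identification $\rho*g(x)=g(\widetilde{\rho}(\cdot-x))$ and $\s'$ convergence of the truncated sums), which the paper simply asserts ``follows immediately''.
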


\begin{remark}\label{growth of besov  function}
When $\alpha>0$, it is well-known that the characterisation of $\dot{B}^\alpha_{\infty,\infty}$ via differences  implies the stronger pointwise growth bound
$$ |(f-p)(x) | \lesa \begin{cases}

          |x|^\alpha \log{ |x| } &\alpha > 0 \text{ and } \alpha \in \NN \\
          |x|^\alpha   &\alpha>0 \text{ and } \alpha \not \in \NN,
        \end{cases}$$
from which the Corollary follows. On the other hand, in the case $\alpha = 0$, the growth bounds in Corollary \ref{cor - growth bound on B and F} and Theorem \ref{thm - classical calderon on B} appear to be new.
\end{remark}

As is common in the study of function spaces via the Calder\'{o}n formula, we require some control over convolutions of the form $\eta_k * \psi_j$ (see for instance the work of Heideman \cite{Heideman1974}). The precise dilation estimate we need is a refined version of \cite[Lemma 2.1]{Bui1996} (see also \cite[Lemma 1]{Rychkov1999a}).

\begin{lemma}\label{lem - main dilation estimate}
Let $m \in \RR, c>0$ and  $N \in \NN$. Suppose $\eta \in L^1$ with $\widehat{\eta} \in C^{N}(\RR^n)$ and $\supp \widehat{\eta} \subset \{ a\les |\xi| \les b\}$ for some $0<a<b$. Let $\psi \in \s'$ with $\widehat{\psi} \in C^{N }\big(\RR^n\setminus\{0\}\big)$.
\begin{enumerate}[label=(\roman*)]
\item Assume $\p^\kappa \widehat{\psi}(\xi)= \mathcal{O}( |\xi|^{ - m} )$ as $|\xi| \rightarrow \infty$ for every $|\kappa|\les N$. Then for any $s\les ct$ we have
            \begin{equation}\label{ver2 eqn1} |\eta_s*\psi_t(x)|\lesa \Big( \frac{s}{t}\Big)^{ m  - n } \, \frac{ t^{-n}}{( 1 + t^{-1} |x|)^N}. \end{equation}
\item Assume  $\p^\kappa \widehat{\psi}(\xi)= \mathcal{O}( |\xi|^{m - |\kappa|} )$ as $|\xi| \rightarrow 0$ for every $|\kappa|\les N$. Then for any $t\les cs$ we have
            \begin{equation}\label{ver2 eqn2}  |\eta_s*\psi_t(x)|\lesa \Big( \frac{t}{s}\Big)^{ m} \frac{ s^{-n}}{( 1 + s^{-1} |x|)^N}. \end{equation}
\end{enumerate}
\begin{proof}
Take $c=1$ for simplicity of notation.
The support assumption on $\widehat{\eta}$ implies that the convolution $\eta*\psi$ is well-defined (in fact is an $L^\infty$ function). Moreover,  for every $|\kappa| \les N$ and any $x \in \RR^n$
    \begin{eqnarray}\label{eqn - lem main dilation est - Linfty estimate}
     s^{n -|\kappa|} \big| x^\kappa\, \eta_s * \psi_t(x) \big| \les  \big\| x^\kappa \,\eta * \psi_{\frac{t}{s}}(x) \big\|_{L^\infty}
& \lesa & \big\| \p^\kappa \big[\widehat{\eta}(\xi) \widehat{\psi}_{\frac{t}{s}}(\xi)\big] \big\|_{L^1} \nonumber \\
& \lesa & \sum_{\gamma \les \kappa} \Big( \frac{t}{s} \Big)^{|\gamma|} \int_{a\les |\xi|\les b}  |\p^\gamma \widehat{\psi}( \tfrac{t}{s} \xi) \big| d \xi.
    \end{eqnarray}
In particular, if $s \les t$, then assuming $\p^\kappa \widehat{\psi} = \mc{O}(|\xi|^{ - m })$ as $|\xi| \rightarrow \infty$,  and using the bound (\ref{eqn - lem main dilation est - Linfty estimate}) we deduce that for every $|\kappa| \les N$
        \begin{eqnarray*}
             \,\big| (t^{ - 1}x)^\kappa\, \eta_s * \psi_t(x) \big| & \lesa  & s^{-n}  \Big( \frac{s}{t} \Big)^{|\kappa|} \sum_{\gamma \les \kappa} \Big( \frac{t}{s} \Big)^{|\gamma|} \int_{a\les |\xi| \les b}  \big| \tfrac{t}{s} \xi \big|^{ -m} d\xi  \\
& \approx & t^{-n} \Big( \frac{s}{t} \Big)^{ m - n  } \sum_{\gamma \les \kappa} \Big( \frac{s}{t} \Big)^{|\kappa| - |\gamma|}
\lesa   t^{-n} \Big( \frac{s}{t} \Big)^{ m - n  }.
        \end{eqnarray*}
Applying this estimate for $|\kappa|=0$ and $|\kappa|=N$ we obtain (i).

Similarly, if $t \les s$, then assuming $\p^\kappa \widehat{\psi} = \mc{O}(|\xi|^{m - |\kappa|})$ as $|\xi| \rightarrow 0$ and again applying the bound (\ref{eqn - lem main dilation est - Linfty estimate}) we have
    $$ s^{ - |\kappa|} \, \big| x^{\kappa} \, \eta_s * \psi_t(x) \big| \lesa s^{ - n} \sum_{\gamma \les \kappa} \Big( \frac{t}{s} \Big)^{|\gamma|} \int_{a\les |\xi|\les b} \big| \tfrac{t}{s} \xi\big|^{ m - |\gamma|}  d\xi   \lesa  s^{ - n} \Big( \frac{t}{s} \Big)^m $$
which gives (ii).
\end{proof}
\end{lemma}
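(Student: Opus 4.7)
The plan is to extract decay of $\eta_s * \psi_t$ via its Fourier transform, using the standard moment bound $\|x^\kappa g\|_{L^\infty} \lesa \|\p^\kappa \widehat{g}\|_{L^1}$. The compact support of $\widehat{\eta}$ in the annulus $\{a \les |\xi| \les b\}$ away from the origin means that $\widehat{\eta_s * \psi_t}(\xi) = \widehat{\eta}(s\xi)\, \widehat{\psi}(t\xi)$ is automatically compactly supported and $C^N$, regardless of the mild regularity of $\widehat{\psi}$ near $0$ or $\infty$; this is what rescues the convolution despite $\psi$ only being in $\s'$.

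The first step is a scaling reduction via the dilation identity
\[ \eta_s * \psi_t(x) = s^{-n}\, (\eta * \psi_{t/s})(x/s), \]
which reduces the problem to estimating $\eta * \psi_\lambda$ for $\lambda = t/s$, with the original $s$, $t$ reinserted at the end. Applying Leibniz to $\p^\kappa[\widehat{\eta}(\xi) \widehat{\psi}(\lambda\xi)]$ and using that $|\p^{\kappa-\gamma} \widehat{\eta}|$ is bounded on its support reduces the problem to controlling
\[ \sum_{\gamma \les \kappa} \lambda^{|\gamma|} \int_{a \les |\xi| \les b} \big|(\p^\gamma \widehat{\psi})(\lambda \xi)\big|\, d\xi. \]
In case (i), the hypothesis $s \les ct$ gives $\lambda \g 1/c$, so $|\lambda \xi|$ stays bounded below on the annulus, and combining the decay $\p^\gamma \widehat{\psi}(\zeta) = \mc{O}(|\zeta|^{-m})$ as $|\zeta| \to \infty$ with the $C^N$ regularity of $\widehat{\psi}$ on any compact set away from the origin yields $|(\p^\gamma \widehat{\psi})(\lambda\xi)| \lesa \lambda^{-m}$ uniformly. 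In case (ii), $\lambda \les c$ and the hypothesis $\p^\gamma \widehat{\psi}(\zeta) = \mc{O}(|\zeta|^{m-|\gamma|})$ at the origin gives $|(\p^\gamma \widehat{\psi})(\lambda\xi)| \lesa \lambda^{m-|\gamma|}$.

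Substituting these bounds, undoing the scaling, and specialising to $|\kappa| = 0$ and $|\kappa| = N$ yields two endpoint inequalities that combine via the usual $(1 + |y|^N)^{-1}$ trick to the polynomial envelope $(1 + t^{-1}|x|)^{-N}$ in case (i) and $(1 + s^{-1}|x|)^{-N}$ in case (ii), with prefactors $(s/t)^{m-n} t^{-n}$ and $(t/s)^m s^{-n}$ respectively. The main technical nuisance I expect is the bookkeeping: after unwinding the scaling one collects residual powers $(s/t)^{|\kappa| - |\gamma|}$ (and symmetric ones in case (ii)), and it is precisely the hypothesis $s \les ct$ (resp.\ $t \les cs$) together with $|\gamma| \les |\kappa|$ that lets one absorb these into the implicit constant. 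This is the only place in the argument where the relative-size hypothesis on $s$ and $t$ is actually used, and it is what forces $c$ to enter the final constant.
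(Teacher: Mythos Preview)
Your proposal is correct and follows essentially the same route as the paper's proof: the scaling reduction to $\eta * \psi_{t/s}$, the moment bound $\|x^\kappa g\|_{L^\infty} \lesa \|\p^\kappa \widehat{g}\|_{L^1}$, the Leibniz expansion on the annular support of $\widehat{\eta}$, and the specialisation to $|\kappa|=0$ and $|\kappa|=N$ are exactly what the paper does. Your remark about absorbing the residual powers $(s/t)^{|\kappa|-|\gamma|}$ via $s \les ct$ (and symmetrically in case (ii)) is precisely the bookkeeping step the paper carries out explicitly.
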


\begin{remark}\label{rem - dilation est with Besov}
 It is possible to generalise the previous lemma in the following sense. Suppose $\| ( 1 + |x|)^N \eta \|_{L^1} < \infty$ and $\supp \widehat{\eta} \subset \{ a<|\xi|<b\}$ for some $0<a<b<\infty$. Then for any $j, k \in \ZZ$
 \begin{equation}
   \label{eqn - conv est gen}
   \big\| ( 1 + 2^{\min\{j, k\}} |x|)^N \eta_j *\psi_k \big\|_{L^p}
   \les C_\psi 2^{ k ( n - \frac{n}{p})} 2^{ - ( j - k) m}
 \end{equation}
where
        $$ C_\psi \lesa \begin{cases}
    \sup_{|\gamma| \les N} \| P_{\gtrsim 1} ( x^\gamma \psi) \|_{\dot{B}^m_{p, \infty}} \qquad & j \gtrsim k \\
     \sup_{|\gamma| \les N} \| P_{\lesa 1} ( x^\gamma \psi) \|_{\dot{B}^{m+|\gamma|}_{p, \infty}} \qquad & j \lesa k
   \end{cases} $$
and $P_{\gtrsim 1}$ denotes the restriction to frequencies $\gtrsim 1$, i.e. $\| P_{\gtrsim 1} f \|_{\dot{B}^\alpha_{p, \infty}} = \sup_{j \gtrsim 1} 2^{ j \alpha} \| \q_j * f \|_{L^p}$, $P_{\lesa 1}$ is defined similarly. Thus we may replace the assumptions in Lemma \ref{lem - main dilation estimate} by supposing that $\psi$ belongs to certain \emph{Poised spaces of Besov type} (c.f. the work of Peetre \cite{Peetre1975}). The inequality (\ref{eqn - conv est gen}) follow by an application of H\"{o}lder's inequality, together with the support assumption on $\eta$ to deduce that
    \begin{align*}
      \big\| ( 1 + 2^{\min\{j, k\}} &|x|)^N \eta_j *\psi_k \big\|_{L^p} \\
      &= 2^{ k( n - \frac{n}{p}) } \big\| ( 1 + 2^{\min\{j-k, 1\}} |x|)^N \eta_{j-k} * \psi \big\|_{L^p} \\
      &\lesa 2^{ k( n - \frac{n}{p}) } \sup_{|\kappa + \gamma| \les N}  \Big( 2^{\min\{ (j-k)|\gamma|, -(j-k)|\kappa|\}} \big\|  ( x^\kappa \eta)_{j-k} * ( x^\gamma \psi) \big\|_{L^p} \Big)\\
      &\lesa 2^{ k( n - \frac{n}{p}) }  2^{ - (j-k) m  } \| ( 1 + |x|)^N \eta \|_{L^1}  \sup_{|\gamma| \les N} \Big(\sup_{j' \approx j-k} 2^{ j' m  + \min\{ j', 1\} |\gamma| } \| \q_{j'} * ( x^\gamma \psi) \|_{L^p}\Big)
    \end{align*}
which gives (\ref{eqn - conv est gen}) by definition of $\dot{B}^m_{p, \infty}$.

To illustrate the connection between (\ref{eqn - conv est gen}) and Lemma \ref{lem - main dilation estimate}, note that in the case $p \g 2$, the assumptions on $\psi$ in Lemma \ref{lem - main dilation estimate} implies that the righthand side of (\ref{eqn - conv est gen}) is finite. More
precisely, if  $2 \les p \les \infty$ and $ \widehat{\psi} \in C^N( \{ |\xi| \gtrsim 1 \})$ with $|\p^\gamma \widehat{\psi}(\xi)| \lesa |\xi|^{ - m - n( 1 - \frac{1}{p})}$ for $|\xi| \gtrsim 1$ and  $|\gamma| \les N$, then an application of the  Hausdorff-Young inequality gives for every $|\gamma| \les N$
    $$  \big\| P_{\gtrsim 1}( x^\gamma \psi ) \big\|_{\dot{B}^m_{p, \infty}}  \lesa  \sup_{ j \gtrsim 1} 2^{jm} \big\| \p^\gamma \widehat{\psi} \big\|_{L^{p'}( |\xi| \approx 2^j)} \lesa \sup_{ j \gtrsim 1 }  2^{m j } \big\| |\xi|^{ -m - \frac{n}{p'}} \big\|_{L^{p'}( |\xi| \approx 2^j)} < \infty.$$
Similarly, if $\widehat{\psi} \in C^N( \RR^n \setminus\{0\})$ with $ |\p^\gamma \widehat{\psi}(\xi)| \lesa |\xi|^{ - m - n( 1 - \frac{1}{p})- |\gamma|}$ for $0<|\xi| \lesa 1$ and $|\gamma| \les N$, then
 \begin{eqnarray*}
\big\| P_{\lesa 1}( x^\gamma \psi ) \big\|_{\dot{B}^{ m + |\gamma|}_{p, \infty}} &  \lesa & \sup_{ j \lesa 1} 2^{(m+ |\gamma|) j} \big\| \p^\gamma \widehat{\psi} \big\|_{L^{p'}( |\xi| \approx 2^j)}< \infty.
\end{eqnarray*}
Thus in terms of conditions on $\psi$, (\ref{eqn - conv est gen}) implies Lemma \ref{lem - main dilation estimate}. On the other hand, the disadvantage of (\ref{eqn - conv est gen}) is that firstly in certain cases we need more decay on $\eta$, and secondly the conditions on $\psi$ are more difficult to verify. As our emphasis is on finding conditions on our kernel which are easy to establish, throughout this article we ignore this slight generalisation and instead make use of Lemma \ref{lem - main dilation estimate}.
\end{remark}

\begin{remark}\label{rem - dilation est via L2}
  A typical application of Lemma \ref{lem - main dilation estimate} would involve estimating $\| ( 1 + 2^j |x|)^\lambda \eta_j * \psi_k \|_{L^1}$ via an application of H\"{o}lder's inequality to obtain
    $$\| ( 1 + 2^j |x|)^\lambda \eta_j * \psi_k \|_{L^1} \lesa 2^{- j n} \| ( 1 + 2^j |x|)^{n + 1 + [\lambda]} \eta_j * \psi_k \|_{L^\infty} $$
  and then using the $L^\infty$ bound obtained in Lemma \ref{lem - main dilation estimate} which requires $\widehat{\eta} \in C^{n+1+[\lambda]}$. However this argument can be performed more efficiently by using Plancheral instead of the $\| u \|_{L^\infty} \les \| \widehat{u} \|_{L^1}$ bound used in the proof of Lemma \ref{lem - main dilation estimate}. In more detail, we can use
    $$\| ( 1 + 2^j |x|)^\lambda \eta_j * \psi_k \|_{L^1} \lesa \| ( 1 + 2^j|x|)^{[\frac{n}{2} + \lambda] +1} \eta_j * \psi_k \|_{L^2} \lesa \sup_{|\gamma| \les [\frac{n}{2} + \lambda] + 1} 2^{ |\gamma| j } \| \p_\xi^\gamma (\widehat{\eta}_j \widehat{\psi_k}) \|_{L^2}$$
  which, after following the proof of Lemma \ref{lem - main dilation estimate}, only requires $\widehat{\eta} \in C^{ [\frac{n}{2} + \lambda] + 1}$. To summerize, it is often possible to replace the assumption $\widehat{\eta} \in C^{n+1 +  [\lambda]}$ with $\widehat{\eta} \in C^{[\frac{n}{2} + \lambda] + 1}$. A similar comment applies to the differentiability condition on $\widehat{\psi}$.
\end{remark}

To apply our characterisations to Poisson like kernels, we need to estimate the spatial decay of $\mathcal{F}^{-1}(|\xi|^\beta e^{-|\xi|})$. The required decay is a consequence of the following corollary of Lemma \ref{lem - main dilation estimate}.
\begin{corollary}\label{cor - fourier decay at origin implies spatial decay}
  Let $r>\ell \g0 $ and $1 \les p < \infty$. Let $\psi \in L^p$ and assume $\supp \widehat{\psi} \subset \{ |\xi| \les 1\}$. Furthermore, suppose that $\widehat{\psi} \in C^{n + 1 + [\ell]}( \RR^n \setminus \{0 \})$
  with
                $$ \p^\kappa \widehat{\psi}(\xi) =  \mathcal{O}( |\xi|^{r -|\kappa|}) \qquad \qquad  \text{ as } |\xi| \rightarrow 0 $$
  for every $|\kappa| \les n + 1 + [\ell]$. Then $ (1+|x|)^{\ell} \psi \in L^1$ and moreover $\psi$ has $[\ell]$ vanishing moments.
\begin{proof}
We begin by observing that
       \begin{align}  \sum_{j \les 1} \big\| |x|^\ell (\q_j * \q_j * \psi) \big\|_{L^1}   &\lesa \sum_{j \les 1}  2^{ - j \ell} \big\| ( 1 + 2^j |x|)^\ell \q_j * \psi \big\|_{L^1} \notag \\
       &\lesa \sum_{j \lesa 1} 2^{ j(r-\ell)} < \infty \label{eqn - cor fourier decay at origin implies spatial decay - sum belongs to L1}
       \end{align}
where we used an application of (ii) in Lemma \ref{lem - main dilation estimate} (with $t =1$ and $s=2^{-j}$) to deduce that
        $$ \| ( 1+ 2^j|x|)^\ell \q_j * \psi \|_{L^1} \lesa 2^{ j r} 2^{jn} \| ( 1 + 2^j|x|)^{\ell - (n + 1 + [\ell])} \|_{L^1}\lesa 2^{ j r }.$$
On the other hand, the assumption $\psi \in L^p$ together with $\supp \widehat{\psi} \subset \{ |\xi| \les 1\}$ implies that we have the pointwise identity\footnote{As in the standard proof of the reproducing formula (see (\ref{eqn - thm cald on s' - phi ident}) in Section \ref{sec - appendix}), there exists $\phi \in \s$ such that
        $$ \psi(x) = \phi_M * \psi(x) +  \sum_{M\les j \les 1} \q_j * \q_j * \psi(x) $$
  where we used the support assumption on $\widehat{\psi}$. An application of H\"{o}lder's inequality gives
    $$ \| \phi_M* \psi\|_{L^\infty} \les \| \phi_M \|_{L^{p'}} \| \psi \|_{L^p} \lesa   2^{ M \frac{n}{p}}. $$
 and thus,  as $p<\infty$, (\ref{calderon - zero}) follows by letting $M \rightarrow - \infty$.}
\begin{equation}\label{calderon - zero}
  \psi(x) = \sum_{ j \les 1 } \q_j * \q_j * \psi(x)
\end{equation}
for a.e. $x \in \RR^n$ (in fact as $\psi$ is smooth, the identity holds for every $x \in \RR^n$). Consequently,  (\ref{eqn - cor fourier decay at origin implies spatial decay - sum belongs to L1}) implies that $|x|^\ell \psi \in L^1$. Therefore $\| \psi \|_{L^1} \lesa \| \psi \|_{L^p} + \| |x|^\ell \psi \|_{L^1}$ and so we deduce that $( 1+ |x|)^\ell \psi \in L^1$. Finally, to check that $\psi$ has $[\ell]$ vanishing moments, we simply note that $\widehat{\psi} \in C^{[\ell]}(\RR^n)$, and hence the decay condition gives $\p^\kappa \widehat{\psi}(0) = 0$ for every $|\kappa| \les [\ell]$. Together with the integrability $( 1 + |\cdot|)^\ell \psi \in L^1$, this implies that $\psi$ has $[\ell]$ vanishing moments as claimed.

\end{proof}
\end{corollary}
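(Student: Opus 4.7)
The plan is to exploit the band-limited assumption $\operatorname{supp}\widehat{\psi}\subset\{|\xi|\les 1\}$ together with the prescribed cancellation of $\widehat{\psi}$ at the origin by running the Calder\'on reproducing formula only over low-frequency scales. Specifically, I would first establish the pointwise identity
\[
 \psi(x)=\sum_{j\les 1}\q_j*\q_j*\psi(x)
\]
for a.e.\ $x$ (in fact every $x$, as $\psi$ is smooth). The natural way is to apply Theorem~\ref{thm - classical calderon on s'}, take a partial Calder\'on sum $\phi_M*\psi+\sum_{M<j\les 1}\q_j*\q_j*\psi$ (the high-frequency tail disappears because of the support condition on $\widehat{\psi}$), and then send $M\to-\infty$ using $\|\phi_M*\psi\|_{L^\infty}\les\|\phi_M\|_{L^{p'}}\|\psi\|_{L^p}\lesa 2^{Mn/p}\to 0$ since $p<\infty$. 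This kills the polynomial that normally appears in the reproducing formula.

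Next, I would estimate each summand in $L^1$ with weight $|x|^\ell$. Writing
\[
 \bigl\||x|^\ell\,\q_j*\q_j*\psi\bigr\|_{L^1}\lesa 2^{-j\ell}\bigl\|(1+2^j|x|)^\ell\,\q_j*\psi\bigr\|_{L^1},
\]
the factor on the right is controlled by Lemma~\ref{lem - main dilation estimate}(ii), applied with the outer kernel $\q$ at dyadic scale $2^{-j}$ and $\psi$ at unit scale. The hypothesis $\p^\kappa\widehat{\psi}(\xi)=\mc{O}(|\xi|^{r-|\kappa|})$ near the origin for all $|\kappa|\les n+1+[\ell]$ supplies a gain of $(2^{-j})^r=2^{-jr}$; paired with the $L^1$-norm of the Schwartz weight $(1+2^j|x|)^{\ell-(n+1+[\ell])}$, this yields $\|(1+2^j|x|)^\ell\q_j*\psi\|_{L^1}\lesa 2^{jr}$. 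Summing,
\[
 \sum_{j\les 1}\bigl\||x|^\ell\,\q_j*\q_j*\psi\bigr\|_{L^1}\lesa\sum_{j\les 1}2^{j(r-\ell)}<\infty,
\]
which converges precisely because $r>\ell$.

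Combining this with the pointwise identity gives $|x|^\ell\psi\in L^1$. To upgrade to $(1+|x|)^\ell\psi\in L^1$, I would split the integral at $|x|=1$: on $|x|\g 1$ the weight $|x|^\ell$ dominates, and on $|x|\les 1$ a single H\"older inequality against $\mathbf{1}_{B(0,1)}\in L^{p'}$ reduces the estimate to $\|\psi\|_{L^p}<\infty$. Finally, for the vanishing moments, observe that $(1+|\cdot|)^\ell\psi\in L^1$ forces $\widehat{\psi}\in C^{[\ell]}(\RR^n)$, so the classical identification $\p^\kappa\widehat{\psi}(0)=(-i)^{|\kappa|}\int x^\kappa\psi(x)\,dx$ holds for $|\kappa|\les[\ell]$; but the decay hypothesis with $r-|\kappa|>0$ for such $\kappa$ forces $\p^\kappa\widehat{\psi}(0)=0$, giving the $[\ell]$ vanishing moments.

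The only genuine subtlety is the first step: justifying the $L^p$-convergent Calder\'on decomposition without an extra polynomial term. Everything afterwards is a careful bookkeeping of the Lemma~\ref{lem - main dilation estimate} gain $2^{jr}$ against the weight loss $2^{-j\ell}$, and this balance is exactly what the condition $r>\ell$ is designed to exploit.
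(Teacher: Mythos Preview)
Your proposal is correct and follows essentially the same approach as the paper's own proof: establish the low-frequency Calder\'on identity $\psi=\sum_{j\les 1}\q_j*\q_j*\psi$ via $\|\phi_M*\psi\|_{L^\infty}\lesa 2^{Mn/p}\to 0$, bound each weighted summand by $2^{j(r-\ell)}$ using Lemma~\ref{lem - main dilation estimate}(ii), sum, and then read off the vanishing moments from $\p^\kappa\widehat{\psi}(0)=0$. The only cosmetic differences are the order in which you present the identity and the estimate, and a small slip where you write the intermediate gain as $(2^{-j})^r=2^{-jr}$ (the factor from the lemma is $(t/s)^r=2^{jr}$, which is what you correctly use in the next line).
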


\begin{remark}
  The previous corollary can be improved somewhat by using Remark \ref{rem - dilation est via L2}. In particular, we can replace the assumption $\widehat{\psi} \in C^{n + 1 + [\ell]}(\RR^n\setminus \{0\})$ with the slightly weaker $\widehat{\psi} \in C^{[\frac{n}{2} + \ell]+1}(\RR^n \setminus \{ 0 \} )$.
\end{remark}
  Corollary \ref{cor - fourier decay at origin implies spatial decay} has an immediate application to the Poisson kernel.
\begin{corollary}\label{Poisson kernel - integrability}
Let $\beta \g 0$, and let $\widehat{\psi}(\xi) = |\xi|^\beta e^{ - |\xi|}$. Then $( 1 + |\cdot|)^\ell \psi \in L^1$ for every $\ell < \beta$.
\begin{proof}
Let $\chi\in \s$ such that $\supp\chi \subset \{|\xi| \les 1\}$, and $\chi =1$ in a neighbourhood of the origin. Write
$\widehat{\psi} = \widehat{\psi} \chi + (1-\chi) \widehat{\psi} = \widehat{\theta}+\widehat{\mu}$. Then $\mu \in \s$ and $\theta$ satisfies the assumptions of Corollary \ref{cor - fourier decay at origin implies spatial decay} with $r=\beta$.
\end{proof}
\end{corollary}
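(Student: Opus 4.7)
The plan is to split $\widehat{\psi}$ with a smooth cutoff into a piece supported near the origin (where Corollary \ref{cor - fourier decay at origin implies spatial decay} can be applied with $r=\beta$) and a residual piece that is automatically Schwartz because the exponential factor controls it.

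First I would pick $\chi \in C_c^\infty(\RR^n)$ with $\chi \equiv 1$ on a neighbourhood of $0$ and $\supp \chi \subset \{|\xi| \les 1\}$, and write $\widehat{\psi} = \widehat{\theta} + \widehat{\mu}$ with $\widehat{\theta} = \chi \widehat{\psi}$ and $\widehat{\mu} = (1-\chi)\widehat{\psi}$. For the high-frequency piece $\widehat{\mu}$, the factor $(1-\chi)$ annihilates the non-smooth behaviour at the origin, so $\widehat{\mu} \in C^\infty(\RR^n)$; away from the origin $|\xi|^\beta$ is smooth, and each derivative of $|\xi|^\beta e^{-|\xi|}$ acquires at worst a polynomial factor that is dominated by $e^{-|\xi|}$. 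Thus $\widehat{\mu} \in \s$, hence $\mu \in \s$, and in particular $(1+|\cdot|)^\ell \mu \in L^1$ for all $\ell \g 0$.

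Next I would verify the hypotheses of Corollary \ref{cor - fourier decay at origin implies spatial decay} for $\theta$ with parameter $r=\beta$. The support condition $\supp \widehat{\theta} \subset \{|\xi| \les 1\}$ is immediate, and $\theta \in L^2$ since $\widehat{\theta}$ is bounded with compact support (so one can take $p=2$ in the corollary). The real work is the pointwise bound $\p^\kappa \widehat{\theta}(\xi) = \mathcal{O}(|\xi|^{\beta - |\kappa|})$ as $|\xi|\to 0$ for every $|\kappa| \les n+1+[\ell]$. Since $\chi$ is smooth and equals $1$ near the origin, this reduces to the same bound for $\p^\kappa (|\xi|^\beta e^{-|\xi|})$. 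A short induction using $\p_i |\xi| = \xi_i/|\xi|$ shows $\p^\gamma |\xi|^\beta = \mathcal{O}(|\xi|^{\beta - |\gamma|})$ on $\RR^n\setminus\{0\}$, while $\p^\delta e^{-|\xi|} = \mathcal{O}(1)$ for $|\delta|=0$ and $\mathcal{O}(|\xi|^{1-|\delta|})$ for $|\delta|\g 1$. Leibniz then gives
$$\p^\kappa (|\xi|^\beta e^{-|\xi|}) = \mathcal{O}(|\xi|^{\beta - |\kappa|}) + \mathcal{O}(|\xi|^{\beta+1-|\kappa|}) = \mathcal{O}(|\xi|^{\beta - |\kappa|})$$
as $|\xi| \to 0$, where the dominant term comes from $\gamma=\kappa$ (all derivatives falling on $|\xi|^\beta$).

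With this verified, Corollary \ref{cor - fourier decay at origin implies spatial decay} applied with $r=\beta$ and the given $\ell<\beta$ yields $(1+|\cdot|)^\ell \theta \in L^1$, and adding the Schwartz contribution gives $(1+|\cdot|)^\ell \psi \in L^1$. The only delicate step is the derivative bookkeeping for $|\xi|^\beta e^{-|\xi|}$ near the origin; the key observation making it routine is that any derivative landing on the factor $e^{-|\xi|}$ produces an extra factor of $|\xi|$ relative to the naive count, so the worst-case power is always $\beta - |\kappa|$, exactly matching the hypothesis of the corollary.
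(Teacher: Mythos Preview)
Your proof is correct and follows exactly the same approach as the paper: split $\widehat{\psi}$ with a smooth cutoff $\chi$ into $\widehat{\theta}=\chi\widehat{\psi}$ and $\widehat{\mu}=(1-\chi)\widehat{\psi}$, observe $\mu\in\s$, and apply Corollary~\ref{cor - fourier decay at origin implies spatial decay} to $\theta$ with $r=\beta$. The paper's proof simply asserts that $\theta$ satisfies the hypotheses of that corollary, whereas you have carefully carried out the Leibniz bookkeeping verifying $\p^\kappa\widehat{\theta}(\xi)=\mathcal{O}(|\xi|^{\beta-|\kappa|})$; your observation that derivatives hitting $e^{-|\xi|}$ contribute an extra factor of $|\xi|$ is exactly what makes this work.
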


Finally we make use of the following elementary summation inequalities.

\begin{proposition}\label{prop - summation inequalities}
Fix $0< p , q \les \infty$ and let $f_k$ be a sequence of measurable functions.  If $ ( a_j )_{j \in \ZZ} \in \ell^{\min\{p, q, 1\}}(\ZZ)$
then we have
$$\Big( \sum_{k\in \ZZ} \big( \big\lVert \sum_{j \in \ZZ} a_{j-k} f_k \big\rVert_p \big)^q \Big)^{1/q} \lesa \Big( \sum_{j \in \ZZ} \big\lVert f_j \big\rVert^q_p \Big)^{1/q}.$$
Similarly if $( a_j)_{j \in \ZZ} \in \ell^{ \min\{  q, 1\}}(\ZZ)$ then
$$ \Big\lVert \Big( \sum_{j\in \ZZ} \big(\sum_{k \in \ZZ} |a_{j-k}  f_k| \big)^q \Big)^{1/q} \Big\rVert_p \lesa \Big\lVert \Big( \sum_{j \in \ZZ} |f_j|^q \Big)^{1/q} \Big\rVert_p. $$
\begin{proof}
This proposition is a folklore result.
The proof is based on Young's inequality and the inequality
    $$\Big( \sum_{j \in \ZZ} |b_j| \Big)^r\les \sum_{j \in \ZZ} |b_j|^r$$
which holds whenever $0<r\les 1$. We omit the details.
\end{proof}
\end{proposition}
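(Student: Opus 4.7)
The plan is to reduce both inequalities to scalar convolution estimates on $\ZZ$, combined with the appropriate quasi-triangle inequalities for $L^p$ and $\ell^q$. I read the first inequality as having convolution $\sum_j a_{j-k} f_j$ inside the norm (the indexing as printed appears to contain a typo), which is the version that is genuinely nontrivial and is the one needed in later sections.

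For the first inequality, set $s=\min\{p,1\}$. The first step is to apply the quasi-triangle inequality for $L^p$ (ordinary Minkowski when $p\g1$, the $p$-sub\-additivity $\|F+G\|_{L^p}^p\les \|F\|_{L^p}^p+\|G\|_{L^p}^p$ when $p<1$) to obtain, for each $k$,
$$ \Big\| \sum_j a_{j-k} f_j \Big\|_p \les \Big(\sum_j |a_{j-k}|^s \|f_j\|_p^s\Big)^{1/s}. $$
Writing $\tilde{a}_j=|a_j|^s$ and $b_j=\|f_j\|_p^s$, the left-hand side of the proposition is then controlled by $\bigl(\sum_k (\tilde{a}\ast b)_k^{q/s}\bigr)^{1/q}$, i.e.\ by $\|\tilde{a}\ast b\|_{\ell^{q/s}}^{1/s}$. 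I then split into two subcases. If $q\g s$, scalar Young $\|\tilde{a}\ast b\|_{\ell^{q/s}}\les \|\tilde{a}\|_{\ell^1}\|b\|_{\ell^{q/s}}$ produces the required factor $\|a\|_{\ell^s}\bigl(\sum_j\|f_j\|_p^q\bigr)^{1/q}$. If instead $q<s$ (in which case $\min\{p,q,1\}=q$), the elementary bound $\bigl(\sum_j c_j\bigr)^{q/s}\les \sum_j c_j^{q/s}$ (valid since $q/s\les 1$) gives directly $\sum_k(\tilde{a}\ast b)_k^{q/s}\les \|a\|_{\ell^q}^q \sum_j\|f_j\|_p^q$. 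In either case the exponent on $(a_j)$ is $\min\{p,q,1\}$, as claimed.

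For the second (vector-valued) inequality, I would carry out the analogous reasoning \emph{pointwise} before taking the $L^p$ norm. When $q\g 1$, an application of Minkowski in $\ell^q$ to the inner sum, followed by scalar Young, yields pointwise
$$ \Big(\sum_j \Big(\sum_k |a_{j-k} f_k|\Big)^q\Big)^{1/q}\les \|a\|_{\ell^1}\Big(\sum_k |f_k|^q\Big)^{1/q}. $$
When $q<1$, the $q$-sub\-additivity $\bigl(\sum_k c_k\bigr)^q\les \sum_k c_k^q$ allows one to expand the inner power and swap the order of summation, obtaining the same pointwise inequality but with $\|a\|_{\ell^q}$ in place of $\|a\|_{\ell^1}$. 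Applying the $L^p$ norm of both sides then gives the stated estimate.

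The argument is largely routine; the only thing requiring real care is the case distinction, which is needed to recover the sharp exponent $\min\{p,q,1\}$ on $(a_j)$ and forces the use of different quasi-triangle and sub\-additivity inequalities in different regimes. Beyond this book-keeping, I do not anticipate any genuine obstacle.
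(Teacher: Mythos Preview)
Your argument is correct and is precisely the ``details'' that the paper omits: the paper's own proof consists only of the remark that one uses Young's inequality together with the subadditivity $(\sum_j |b_j|)^r\les \sum_j |b_j|^r$ for $0<r\les 1$, and your case splitting (on $p\lessgtr 1$ for the $L^p$ quasi-triangle, then on $q\lessgtr s$, respectively $q\lessgtr 1$) is exactly how these two tools combine to yield the sharp exponent $\min\{p,q,1\}$ (resp.\ $\min\{q,1\}$). Your reading of the first inequality as a genuine convolution $\sum_j a_{j-k} f_j$ is also the intended one, as confirmed by its use in Section~\ref{subsection - Nec Cond}.
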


\section{Pointwise Definition of the Convolution}\label{sec - pointwise defn of conv}

The introduction of distributions of finite growth, together with Definition \ref{Defn of convolution}, makes it possible to define the convolution $\psi * f$ as a distribution. However, the characterisation results in Theorems \ref{thm - main thm necessary cond} and \ref{thm - sufficient with rapidly decreasing} require a \emph{pointwise} definition. In this section we give two sets of sufficient conditions to ensure that $\psi*f \in L^1_{loc}$. The first is via what is essentially a duality argument exploiting the Calder\'{o}n reproducing formula given in Theorem \ref{thm - classical calderon on B}. This argument has the advantage that it requires very few assumptions on the kernel $\psi$. On the other hand it is only applicable in the case $f \in \dot{B}^\alpha_{\infty, \infty}$, and thus is not helpful in Theorem \ref{thm - sufficient with rapidly decreasing}. The second approach is much more general, and works for arbitrary distributions $ f \in \s'$, provided only that $f$ has finite growth. However it correspondingly requires much stronger conditions on the the kernel $\psi$. \\

\subsection{The case $f \in \dot{B}^\alpha_{\infty, \infty}$} The key result is the following.

\begin{theorem}\label{thm - calderon and the pointwise defn of conv}
  Let $\alpha, \ell \in \RR$ with $\ell \g0$ and $\ell>\alpha$. Let $f \in \dot{B}^\alpha_{\infty, \infty}$. Assume $\psi \in \dot{B}^{-\alpha}_{1, 1}$ with $(1+|x|)^\ell \psi \in L^1$. Let $p$ be the polynomial given by Theorem \ref{thm - classical calderon on B}. Then the distribution $\psi*(f-p)$ is a bounded continuous function and we have the identity
       \begin{equation}\label{eqn - thm cald and pointwise defn - formula in statement}\psi*(f-p)(x) = \sum_{j \in \ZZ} \psi*\q_j*\q_j*f(x)\end{equation}
  where the sum converges in $L^\infty$.
\begin{proof}
  Let $p$ be the polynomial given in Theorem \ref{thm - classical calderon on B}. The decay assumption on $\psi$ implies that the convolution $\psi*(f-p) \in \s'$. Define $g$ as
         $$ g(x) = \sum_{j \in \ZZ} \psi*\q_j*\q_j*f(x).$$
  The duality estimate $\sum_j |\psi*\q_j*\q_j*f(x)| \les \| f \|_{\dot{B}^{\alpha}_{\infty, \infty}} \| \psi \|_{\dot{B}^{-\alpha}_{1, 1}}$ implies that $g$ is a bounded continuous function. Thus the theorem would follow by showing that for every $\rho \in \s$
        \begin{equation}\label{eqn - thm calderon and pointwise defn of conv - key equality} \psi*(f-p)(\rho) = \int_{\RR^n} g(x) \rho(x) dx. \end{equation}
  To this end, by definition of the distribution $\psi*(f-p)$, together with the growth bound in Theorem \ref{thm - classical calderon on B},  the Dominated Convergence Theorem, and the decay condition on $\psi$, we have for any $\rho \in \s$
    \begin{align*}\psi*(f-p)(\rho) &= \int_{\RR^n} \widetilde{\rho}*(f-p)(x) \psi(x) dx \\
        &= \int_{\RR^n} \lim_{N\rightarrow -\infty} \widetilde{\rho}*\bigg( p_N + \sum_{j=N+1}^\infty \q_j*\q_j*f\bigg)(x) \psi(x) dx \\
        &=\lim_{N\rightarrow - \infty}  \bigg( \int_{\RR^n} \rho(x) \psi*p_N(x) dx + \sum_{j=N+1}^\infty \int_{\RR^n} \psi*\q_j*\q_j*f(x) \rho(x) dx\bigg).
    \end{align*}
  We claim that the assumptions on $\psi$ imply that $\psi$ has $[\alpha]$ vanishing moments\footnote{In fact, the following argument shows that if $(1+|x|)^{[\alpha]} \psi \in L^1$ and $\psi \in \dot{B}^{-\alpha}_{1, q}$ for some $q<\infty$, then $\psi$ has $[\alpha]$ vanishing moments.}, in other words $\int x^\gamma \psi = 0$ for every $|\gamma| \les [\alpha]$. Accepting this claim for the moment, we have $\psi*p_N=0$ for every $N$ and hence
            $$ \psi*(f-p)\big( \rho\big) = \sum_{j \in \ZZ} \int_{\RR^n} \psi*\q_j*\q_j*f(x) \rho(x) dx = \int_{\RR^n} g(x) \rho(x) dx$$
  where the last equality follows by the uniform convergence of the sum. Therefore (\ref{eqn - thm calderon and pointwise defn of conv - key equality}) follows as required.

  Thus it only remains to show that $\psi$ has $[\alpha]$ vanishing moments. If $\alpha<0$ there is nothing to prove so we may assume that $\alpha \g 0$. The decay assumption on $\psi$ implies that $\widehat{\psi} \in C^{[\alpha]}(\RR^n)$ and hence using the form of the Taylor series given in \cite{Folland1990} we can write
    $$\widehat{\psi}(\xi) = \sum_{|\gamma| \les [\alpha]} \frac{\xi^\gamma}{\gamma!} \p^\gamma \widehat{\psi}(0) + [\alpha] \sum_{|\gamma|=[\alpha]} \frac{\xi^\gamma}{\gamma!} \int_0^1 (1-t)^{[\alpha]-1} \big(\p^\gamma \widehat{\psi}(t\xi) - \p^\gamma \widehat{\psi}(0) \big) dt. $$
The continuity of $\p^\gamma \widehat{\psi}$ at the origin then implies that
    \begin{equation}\label{eqn - psi minus taylor bound}
    \widehat{\psi}(\xi) - \sum_{|\gamma|\les [\alpha]} \frac{\xi^\gamma}{\gamma!} \p^\gamma \widehat{\psi}(0) = o\big(|\xi|^{[\alpha]}\big).
    \end{equation}
On the other hand, given any $\xi \not = 0$ we have
    $$ |\widehat{\psi}(\xi)| \lesa |\xi|^\alpha \sum_{ j \les \log_2(|\xi|)} 2^{ - j \alpha} \sup_{2^{j-1}\les |\xi| \les 2^{j+1}} |\widehat{\psi}(\xi)| \lesa |\xi|^\alpha \sum_{ j \les \log_2(|\xi|)+1} 2^{ - j \alpha}  \| \q_j * \psi \|_{L^1}$$
and consequently as $\psi \in \dot{B}^{-\alpha}_{1, 1}$, we deduce that $\widehat{\psi}(\xi)=o\big(|\xi|^\alpha\big)$ as $|\xi| \rightarrow 0$. Together with the bound (\ref{eqn - psi minus taylor bound}) we obtain
            $$ \sum_{|\gamma|\les [\alpha]} \frac{\xi^\gamma}{\gamma!} \p^\gamma \widehat{\psi}(0) = o\big(|\xi|^{[\alpha]}\big)$$
which is only possible if $\p^\gamma \widehat{\psi}(0) = 0$ for every $|\gamma| \les [\alpha]$. Therefore $\psi$ has $[\alpha]$ vanishing moments as claimed.
\end{proof}
\end{theorem}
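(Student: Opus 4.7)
The plan is to produce a candidate bounded continuous function $g$ from the right-hand series, and then to verify $\psi*(f-p) = g$ as distributions by testing against Schwartz functions and expanding $f-p$ through the Calder\'on reproducing formula (Theorem \ref{thm - classical calderon on B}). The pivotal technical point will be extracting from the hypotheses $\psi \in \dot{B}^{-\alpha}_{1,1}$ and $(1+|x|)^\ell \psi \in L^1$ that $\psi$ has $[\alpha]$ vanishing moments.

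First I would check the background. Theorem \ref{thm - classical calderon on B} tells us $f-p$ is of growth $\ell$ (since $\ell \g 0$ and $\ell > \alpha$), so Definition \ref{Defn of convolution} produces a distribution $\psi*(f-p) \in \s'$. Next, the pointwise duality estimate
$$|\psi*\q_j*\q_j*f(x)| \les \|\q_j*\psi\|_{L^1}\,\|\q_j*f\|_{L^\infty} \lesa 2^{-j\alpha}\|\q_j*\psi\|_{L^1}\,\|f\|_{\dot{B}^\alpha_{\infty,\infty}}$$
summed over $j \in \ZZ$ is bounded by $\|\psi\|_{\dot{B}^{-\alpha}_{1,1}}\|f\|_{\dot{B}^\alpha_{\infty,\infty}}$, so $g(x) := \sum_j \psi*\q_j*\q_j*f(x)$ converges uniformly to a bounded function; each summand is continuous (convolution of an $L^1$ and an $L^\infty$ function), hence so is $g$.

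To identify $\psi*(f-p)$ with $g$, I test against arbitrary $\rho \in \s$. Starting from $\psi*(f-p)(\rho) = \int \psi(x)\,\widetilde{\rho}*(f-p)(x)\,dx$, I substitute the Calder\'on expansion $f-p = \lim_{N \to -\infty}(p_N + \sum_{j > N}\q_j*\q_j*f)$ in $\s'$; the uniform growth bound from Theorem \ref{thm - classical calderon on B} dominates $|\widetilde{\rho}*(\text{partial sum})(x)|$ by a constant times $1 + |x|^{\max\{0,\alpha\}}$ (with at worst a log correction when $\alpha \in \NN$), which is integrable against $(1+|x|)^\ell \psi$ because $\ell > \alpha$ and $\ell \g 0$. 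Dominated convergence then passes the limit inside, and Fubini distributes $\int \psi$ across the $j$-sum, yielding
$$\psi*(f-p)(\rho) = \lim_{N \to -\infty}\Big(\int \rho\cdot (\psi*p_N) + \sum_{j > N}\int \rho\cdot (\psi*\q_j*\q_j*f)\Big).$$
The second term converges to $\int \rho\, g$ by the uniform absolute summability established above, so everything reduces to showing $\psi*p_N = 0$.

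I expect the main obstacle to be showing that $\psi$ has $[\alpha]$ vanishing moments, from which $\psi*p_N = 0$ (as $\deg p_N \les [\alpha]$) and the identity closes up. For $\alpha<0$ this is vacuous; for $\alpha \g 0$ I would combine two complementary descriptions of $\widehat{\psi}$ near the origin. Taylor's formula, valid because $(1+|x|)^\ell \psi \in L^1$ with $\ell > [\alpha]$ puts $\widehat{\psi} \in C^{[\alpha]}(\RR^n)$, yields
$$\widehat{\psi}(\xi) = \sum_{|\gamma|\les [\alpha]} \tfrac{\xi^\gamma}{\gamma!}\,\p^\gamma\widehat{\psi}(0) + o(|\xi|^{[\alpha]}) \qquad \text{as } \xi \to 0.$$
Independently, $\psi \in \dot{B}^{-\alpha}_{1,1}$ produces, via a Littlewood--Paley tail bound of the form $|\widehat{\psi}(\xi)| \lesa |\xi|^\alpha \sum_{j \lesa \log_2|\xi|} 2^{-j\alpha}\|\q_j*\psi\|_{L^1}$, the estimate $\widehat{\psi}(\xi) = o(|\xi|^\alpha)$ as $|\xi|\to 0$ (the prefactor sum is a tail of a convergent series). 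Since $[\alpha] \les \alpha$ and therefore $o(|\xi|^\alpha) = o(|\xi|^{[\alpha]})$, subtracting the two expansions forces $\p^\gamma \widehat{\psi}(0) = 0$ for every $|\gamma| \les [\alpha]$, which is exactly the vanishing moments property, and the theorem follows.
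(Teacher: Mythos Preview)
Your proposal is correct and follows essentially the same route as the paper's proof: define $g$ via the duality estimate, test $\psi*(f-p)$ against $\rho\in\s$ and expand through the Calder\'on formula of Theorem \ref{thm - classical calderon on B} with dominated convergence justified by the growth bound there, then reduce to $\psi*p_N=0$ via the vanishing-moments claim, which you prove by the same Taylor-versus-Besov-tail argument on $\widehat{\psi}$ near the origin.
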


\begin{remark}
  Let $\alpha \in \RR$ and suppose $f \in \dot{B}^\alpha_{\infty, \infty}$ and $\psi \in \dot{B}^{-\alpha}_{1, 1}$. It is well known that $\dot{B}^{-\alpha}_{\infty, \infty}$ can be identified with the (topological) dual of $\dot{B}^\alpha_{1,  1}$ (see e.g., \cite{Bui1979, Peetre1976}). Thus $f$ is a continuous linear functional on $\dot{B}^{-\alpha}_{1, 1}$ and furthermore, we have the identity\footnote{More explicitly, let $\widehat{\mathcal{O}}_0$ be the collection of all $\phi \in \s$ such that $0 \not \in \supp \widehat{\phi}$. Then as $\widehat{\mathcal{O}}_0$ is dense in $\dot{B}^{-\alpha}_{1, 1}$ (see e.g. \cite{Herz1968,Bui1979}), there exists a sequence $\phi^{(k)} \in \widehat{\mathcal{O}}_0$ such that $\| \psi - \phi^{(k)} \|_{\dot{B}^{-\alpha}_{1, 1}} \rightarrow 0$. We then define
        $$ f(\psi) = \lim_{k \rightarrow \infty} f\big( \phi^{(k)} \big).$$
  It is easy to check that the limit is independent of the choice of sequence $\phi^{(k)}$, and moreover that the resulting linear functional is continuous (as a map from $\dot{B}^{-\alpha}_{1, 1}$ to $\CC$). In addition, an application of Theorem \ref{thm - classical calderon on B} shows that
        $$ f(\psi) = \lim_{k \rightarrow \infty} f(\phi^{(k)}) = \lim_{k \rightarrow \infty} \sum_{j \in \ZZ} \q_j * \q_j * f(\phi^{(k)}) = \sum_{j \in \ZZ} \q_j*\q_j*f(\psi)$$
  where the last line followed from the Dominate Convergence Theorem, the assumption $\psi \in \dot{B}^{-\alpha}_{1, 1}$, and we used the fact that every $\phi^{(k)} \in \mathcal{\widehat{\mc{O}}}_0$ has infinite vanishing moments (thus annihilates all polynomials).}
        $$ f(\psi) = \sum_{j \in \ZZ} \q_j * \q_j * f(\psi).$$
  Thus if we define a convolution $\psi*_d f(x)$ as
            $$ \psi*_d f(x) = f( \tau_x \widetilde{\psi})$$
  we immediately have the pointwise identity
        $$ \psi*_d f(x) = \sum_{j \in \ZZ} \psi* \q_j * \q_j * f(x)$$
  (the convolutions on the righthand side are the standard convolutions between $\s$ and $\s'$). Since the sum converges uniformly, we see that $\psi *_d f(x)$ is a continuous bounded function. Although this definition of the convolution almost immediately gives the result of Theorem \ref{thm - calderon and the pointwise defn of conv}, it has the drawback that is does not always agree with the standard definition of the convolution. In particular, if for instance $\psi \in L^1$ and $f \in L^\infty$ is a constant, then $\psi*_d f = 0$, however $\psi*f(x) =  c \int \psi(y) dy$.

  It is natural to ask when  the convolution defined directly via duality, $\psi*_d f$, agrees with the definition given in Definition \ref{Defn of convolution}. The solution is given by the previous theorem. More precisely, suppose we know in addition that $( 1 + |x|)^\ell \psi \in L^1$ for some $\ell > \alpha$, then for every $f \in \dot{B}^\alpha_{\infty, \infty}$ there exists a polynomial $p$ such that we have the pointwise identity
            $$ \psi*_d f(x) = \psi*(f-p)(x).$$
\end{remark}

\subsection{The general case $f \in \s'$} We now drop the assumption $f \in \dot{B}^\alpha_{\infty, \infty}$, and instead simply assume that $f$ is a distribution of growth $\ell$. Our goal is find conditions on $\psi$ such that the convolution $\psi*f$ defined in Definition \ref{Defn of convolution}, which belongs to $\s'$, is in fact an element of $L^1_{loc}$. One possible solution is to assume $\psi \in \s$, as then  $\psi *f \in C^\infty$. However this is far to strong for our purposes, as we would like our characterisation, and thus the pointwise definition, to apply in the case $\psi \not \in \s$. The way forward, as in the case of $f \in \dot{B}^{\alpha}_{\infty, \infty}$, is to study the convergence of the Calder\'{o}n reproducing formula. The first step in this direction is the following lemma.

\begin{lemma}\label{lem - smoothness of distributions}
Let $\ell \g 0$  and assume $f$ is a distribution of growth $\ell $. Then there exists $\beta \g 0$ depending on $f$, such that for every $\phi \in \s$ and $k \in \ZZ$ we have
        $$ |\phi_k * f(x) | \lesa  2^{|k| \beta}\, \big( 1+ |x|\big)^\ell.$$

    \begin{proof}

      Define the mapping $T : \s \rightarrow L^\infty_\ell$ by $T(\phi) = \phi * f$ where $L^\infty_\ell$ denotes the weighted $L^\infty$ space defined by
        $$L^\infty_\ell = \{g: \| g(x) (1+ |x|)^{-\ell} \|_{L^\infty_x} < \infty \}.$$
      Since $f$ is a distribution of growth $\ell$, the linear mapping $T$ is well-defined. We claim that $T$ is continuous. To see this note that an application of the Closed Graph Theorem (see, for instance,  Theorem 1 on page 79 of \cite{Yosida1995})
reduces the problem to proving that the graph of $T$
            $$\Big\{ \big(\phi, T(\phi) \big) \, \Big| \, \phi \in \s \Big\}$$
      is closed in $\s\times L^\infty_\ell$. Assume $\phi^{(j)}$ converges to $\phi$ in $\s$ and $T(\phi^{(j)})$ converges to some $g \in L^{\infty}_\ell$. Then for some $M>0$ we have
            \begin{align*} |T(\phi^{(j)} - \phi)(x)| = |(\phi^{(j)} - \phi)*f(x) | &\lesa \sum_{|\alpha|, \gamma \les M} \| \phi^{(j)} ( x - \cdot) - \phi(x - \cdot) \|_{\alpha, \gamma} \\
                    &\lesa (1 + |x|)^M \sum_{|\alpha|, \gamma \les M} \| \phi^{(j)}  - \phi \|_{\alpha, \gamma} \end{align*}
      and hence $T(\phi^{(j)})$ converges to $T(\phi)$ pointwise. Therefore we must have $T(\phi) = g \in L^{\infty}_\ell$ and so the graph of $T$ is closed. Consequently $T$ is continuous as claimed.

      The continuity of $T$ implies that we can bound $\| T(\phi) \|_{L^\infty_\ell}$ by a finite number of Schwartz norms of $\phi$ (see, for instance, Corollary 1 on page 43 of \cite{Yosida1995}). Thus there exists $M_1>0$ such that
        \begin{equation}\label{bound on T}
            \| T(\phi) \|_{L^\infty_\ell}= \|\phi*f\|_{L^\infty_\ell} \lesa \sum_{|\alpha|, \gamma \les M_1} \| \phi \|_{\alpha, \gamma}.
          \end{equation}
To complete the proof, we observe that a simple computation shows that $ \| \phi_k \|_{\alpha, \gamma} \lesa 2^{k (n + |\alpha| - |\gamma|)}$
and hence, using (\ref{bound on T}), we obtain
        $$ \frac{ | \phi_k *f (x) |}{(1  + |x|)^\ell} \lesa  2^{|k| \beta}$$
        for some (possibly large) $\beta \g 0$ as required.
    \end{proof}
\end{lemma}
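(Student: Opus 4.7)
The strategy is to convert the pointwise hypothesis ``$\phi * f = \mathcal{O}(|x|^\ell)$ for each individual $\phi\in \s$'' into a uniform seminorm estimate in $\phi$, and then to read off the $k$-dependence from the elementary behavior of Schwartz seminorms under dyadic dilation. To this end I would introduce the weighted Banach space
$$ L^\infty_\ell = \big\{ g:\RR^n \to \CC \, \big| \, \| (1+|\cdot|)^{-\ell} g \|_{L^\infty} < \infty \big\}$$
and consider the linear map $T : \s \to L^\infty_\ell$ given by $T\phi = \phi * f$. The assumption that $f$ is a distribution of growth $\ell$ is precisely what is needed for $T$ to be well-defined as a map into $L^\infty_\ell$.

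\textbf{Continuity via closed graph.} The heart of the matter is to show that $T$ is continuous. Since $\s$ is Fr\'echet and $L^\infty_\ell$ is Banach, by the Closed Graph Theorem this reduces to showing that the graph of $T$ is closed. Suppose then $\phi^{(j)} \to \phi$ in $\s$ and $T\phi^{(j)} \to g$ in $L^\infty_\ell$. Since $L^\infty_\ell$-convergence implies pointwise convergence, it suffices to verify that $\phi^{(j)}*f(x) \to \phi*f(x)$ for each fixed $x\in \RR^n$. But this is immediate: $f \in \s'$ is sequentially continuous, and $\widetilde{\phi^{(j)}}(x-\cdot) \to \widetilde{\phi}(x-\cdot)$ in $\s$ for each $x$ (convergence in $\s$ is preserved under translation and reflection). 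Consequently $T\phi = g$, the graph is closed, and $T$ is continuous.

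\textbf{Seminorm extraction and dyadic scaling.} Continuity of a linear map from a Fr\'echet space into a normed space is equivalent to control by finitely many of its defining seminorms, so there exist $M \g 0$ and $C>0$ with
$$ \| \phi * f \|_{L^\infty_\ell} \les C \sum_{|\alpha|, |\gamma| \les M} \| \phi \|_{\alpha, \gamma} $$
for every $\phi \in \s$. Applying this to $\phi_k(x) = 2^{kn} \phi(2^k x)$ and using the standard scaling $\| \phi_k \|_{\alpha, \gamma} \lesa 2^{k(n + |\alpha| - |\gamma|)}$ (with the paper's convention for $\| \cdot \|_{\alpha,\gamma}$), the worst-case exponent is $\mathcal{O}(|k|(n + M))$ in absolute value, which gives $|\phi_k * f(x)| \lesa 2^{|k|\beta}(1+|x|)^\ell$ for some $\beta \g 0$ depending only on $f$ (through $M$) and $n$.

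\textbf{Main obstacle.} The only genuinely substantive step is the closed-graph verification; the extraction of a seminorm bound and the dyadic scaling of Schwartz seminorms are then routine. A minor subtlety is that Definition \ref{defn dist of finite growth} must be interpreted strongly enough that $\phi * f$ is a genuine measurable function with pointwise bound $\mathcal{O}(|x|^\ell)$ (so that $L^\infty_\ell$ really is the correct target for $T$), but this is built into the definition used in the paper.
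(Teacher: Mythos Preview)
Your proposal is correct and follows essentially the same route as the paper: define $T:\s\to L^\infty_\ell$, apply the Closed Graph Theorem to obtain a seminorm bound, then scale. The only cosmetic difference is in the closed-graph step, where you invoke sequential continuity of $f\in\s'$ directly while the paper writes out the seminorm estimate $|(\phi^{(j)}-\phi)*f(x)|\lesa (1+|x|)^M\sum\|\phi^{(j)}-\phi\|_{\alpha,\gamma}$ explicitly; both yield the required pointwise convergence.
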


We can now prove the following.

\begin{proposition}\label{prop - convolution is a function}
Let $\ell \g 0$. Suppose $( 1 + |\cdot|)^\ell \psi \in L^1$ such that $\widehat{\psi} \in C^{ n + 1 + [\ell]}(\RR^n \setminus \{0\})$ with $\p^\kappa \widehat{\psi}$ rapidly decreasing for every $|\kappa| \les n + 1 + [\ell]$. Let $f \in \s'$ be a distribution of growth $\ell$. Then for every $j \in \ZZ$ the convolution $\psi_j*f$ is a well-defined continuous function. Moreover, there exists $\beta = \beta(f) >0$ such that for every $x \in \RR^n$
            \begin{equation}\label{rapidly implies function eqn1} M_{\ell, \beta}(x, j) = \sup_{k\g j, y \in \RR^n} \frac{ |\psi_k * f (y)| }{ (1+ 2^j |x-y|)^\ell} 2^{\beta( j -k)} < \infty. \end{equation}
\begin{proof}
  Fix $j \in \ZZ$ and let $k \g j$. The assumptions on $f$ and $\psi$ imply that $\psi_k*f \in \s'$. Thus we can follow the standard proof of the Calderon reproducing formula to deduce the identity
       \begin{equation}\label{eqn - prop convolution is a function - half caleron formula} \psi_k*f = \phi_k*\psi_k*f + \sum_{a=k+1}^\infty \q_a*\q_a*\psi_k*f \end{equation}
  where the sum converges in the sense of $\s'$ (see (\ref{eqn - thm cald on s' - phi ident}) in the proof of Theorem \ref{thm - classical calderon on s'}). To show that $\psi_k*f$ is a continuous  function it suffices to prove that the sum converges in  $L^\infty_{loc}$. An application of Lemma \ref{lem - smoothness of distributions} shows that there exists $\beta \g 0$ such that for every $\rho \in \s$ and  $k \g j$
        \begin{equation}\label{eqn - prop  convolution is a function - smoothness of f} |\rho_k*f(x)| \lesa_j 2^{ \beta k} ( 1 + |x|)^\ell.\end{equation}
   Note that by (i) in Lemma \ref{lem - main dilation estimate}, the assumption that $\widehat{\psi}$ is rapidly decreasing together with the support of $\widehat{\q}$ implies that $|\q_a*\psi_k(x)| \lesa 2^{(k-a)(\beta + 1)} 2^{kn} ( 1 + 2^k|x|)^{-(n+1+[\ell])}$. Therefore using an application of (\ref{eqn - prop convolution is a function - smoothness of f}) we deduce the bound
         $$ |\q_a*\q_a*\psi_k*f(x)|  \lesa_j 2^{a \beta} \int_{\RR^n} |\q_a*\psi_k(y)| (1+|x-y|)^\ell dx \lesa 2^{ k( \beta + 1)} 2^{ -a}  ( 1 + |x|)^\ell$$
  and hence the sum in (\ref{eqn - prop convolution is a function - half caleron formula}) converges uniformly on compact sets. Consequently $\psi_k*f$ is a continuous function. Finally, to deduce the required bound, we note that after another application of (\ref{eqn - prop  convolution is a function - smoothness of f}) we have for every $k \g j$
    \begin{align*} |\psi_k*f(x)| &\les |\psi_k * \phi_k * f(x)| + \sum_{a>k} |\q_a* \psi_k * \q_a * f(x)| \\
    &\lesa_j 2^{k  \beta } ( 1 + |x|)^\ell  + 2^{ k (\beta + 1)} \sum_{a \g k} 2^{-a} ( 1 + |x|)^\ell \lesa 2^{ k \beta} ( 1 + |x|)^\ell
    \end{align*}
  which then gives (\ref{rapidly implies function eqn1}).
  \end{proof}
\end{proposition}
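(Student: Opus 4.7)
The plan is to combine three ingredients: (a) the quantitative growth estimate from Lemma \ref{lem - smoothness of distributions}, which gives $\beta_0 = \beta_0(f) \g 0$ with $|\rho_k*f(y)| \lesa 2^{|k|\beta_0}(1+|y|)^\ell$ for every $\rho \in \s$ and $k \in \ZZ$; (b) a one-sided Calder\'on reproducing formula extracted from the proof of Theorem \ref{thm - classical calderon on s'}; and (c) the dilation estimate Lemma \ref{lem - main dilation estimate}(i) applied with an arbitrarily large decay exponent $m$, which is legal here because every $\p^\kappa\widehat\psi$ for $|\kappa| \les n+1+[\ell]$ is rapidly decreasing. Since $(1+|\cdot|)^\ell\psi \in L^1$ and $f$ has growth $\ell$, the convolution $\psi_k*f$ is already defined as an element of $\s'$ via Definition \ref{Defn of convolution}; my task is to promote this to a continuous pointwise function controlled by the stated Peetre-type maximal bound.

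Fix $k \g j$ and pick $\phi \in \s$ such that $\widehat\phi(\xi) + \sum_{a \g 1}|\widehat\q_a(\xi)|^2 = 1$. From the proof of Theorem \ref{thm - classical calderon on s'} one has the $\s'$-valid identity
\begin{equation*}
\psi_k * f \;=\; \phi_k * \psi_k * f \;+\; \sum_{a=k+1}^\infty \q_a * \q_a * \psi_k * f.
\end{equation*}
The first term is a continuous function because $\widehat{\phi_k*\psi_k}$ is compactly supported and $C^{n+1+[\ell]}$, which yields enough spatial decay on $\phi_k*\psi_k$ to pair against $f$. For the summands, I invoke Lemma \ref{lem - main dilation estimate}(i) with $s=2^{-a}$, $t=2^{-k}$ and a decay exponent $m$ chosen so that $m-n \g \beta_0 + 2$, producing
\begin{equation*}
|\q_a*\psi_k(z)| \;\lesa\; 2^{(k-a)(\beta_0 + 2)} \cdot 2^{kn}\,(1+2^k|z|)^{-(n+1+[\ell])}.
\end{equation*}
Convolving with $\q_a*f$, whose growth is $2^{a\beta_0}(1+|\cdot|)^\ell$ by (a), and using the splitting $(1+|w|)^\ell \lesa (1+|y|)^\ell(1+|y-w|)^\ell$, the spatial weight in $z$ absorbs the polynomial factor after a change of variable, giving
\begin{equation*}
|\q_a*\q_a*\psi_k*f(y)| \;\lesa\; 2^{k\beta_0} \cdot 2^{-(a-k)} \cdot (1+|y|)^\ell.
\end{equation*}
The geometric factor $2^{-(a-k)}$ makes the tail converge uniformly on compact sets, so $\psi_k*f$ is continuous and satisfies $|\psi_k*f(y)| \lesa 2^{k\beta_0}(1+|y|)^\ell$.

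For the Peetre-type bound (\ref{rapidly implies function eqn1}), I take $\beta = \beta_0 + 1$. Then for $k \g j$,
\begin{equation*}
\frac{|\psi_k*f(y)|}{(1+2^j|x-y|)^\ell}\cdot 2^{\beta(j-k)} \;\lesa\; 2^{j\beta_0}\cdot 2^{-(k-j)}\cdot \frac{(1+|y|)^\ell}{(1+2^j|x-y|)^\ell},
\end{equation*}
and the $y$-ratio on the right is dominated by $(1+|x|)^\ell \max\{1,2^{-j\ell}\}$ via $(1+|y|) \les 1+|x|+|x-y| \lesa (1+|x|)(1+|x-y|)$ together with the elementary comparison between $(1+|x-y|)$ and $(1+2^j|x-y|)$. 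All constants here are allowed to depend on $x$ and $j$, so the supremum over $y \in \RR^n$ and $k \g j$ is finite as required. The main technical obstacle is step (c): the decay exponent in the dilation estimate must exceed the a priori arbitrary growth rate $\beta_0$ of $f$, and the rapid decrease of every $\p^\kappa\widehat\psi$ for $|\kappa| \les n+1+[\ell]$ is precisely what makes this choice of $m$ possible.
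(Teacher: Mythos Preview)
Your approach is essentially identical to the paper's: both use the one-sided Calder\'on decomposition $\psi_k*f = \phi_k*\psi_k*f + \sum_{a>k}\q_a*\q_a*\psi_k*f$, invoke Lemma~\ref{lem - smoothness of distributions} for the growth exponent $\beta_0$, and then apply Lemma~\ref{lem - main dilation estimate}(i) with an arbitrarily large $m$ (legal precisely because each $\p^\kappa\widehat\psi$ is rapidly decreasing) to beat the $2^{a\beta_0}$ growth of $\q_a*f$ and force geometric convergence of the tail. The final Peetre-type bound is also argued the same way.

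There is one small slip in your treatment of the leading term. You justify the continuity of $\phi_k*\psi_k*f$ by asserting that $\widehat{\phi_k*\psi_k}$ is compactly supported and $C^{n+1+[\ell]}$, but the hypothesis only gives $\widehat\psi \in C^{n+1+[\ell]}(\RR^n\setminus\{0\})$; since $\supp\widehat\phi$ contains a neighbourhood of the origin, the product $\widehat\phi\,\widehat\psi$ is in general only $C^{[\ell]}$ at $\xi=0$ (from $(1+|\cdot|)^\ell\psi\in L^1$), not $C^{n+1+[\ell]}$. The fix is immediate and is exactly what the paper does: write $\phi_k*\psi_k*f = \psi_k*(\phi_k*f)$ as a pointwise integral, using that $\phi_k*f$ is continuous of growth $\ell$ and that $(1+|\cdot|)^\ell\psi_k\in L^1$. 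This yields both continuity and the bound $|\phi_k*\psi_k*f(x)|\lesa_j 2^{k\beta_0}(1+|x|)^\ell$ directly, without any appeal to Fourier regularity at the origin.
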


\begin{remark}\label{remark sharpness of rapidly decreasing assumption}
 Lemma \ref{lem - smoothness of distributions} assures us that for any distribution $f$ of growth $\ell$, there exists a $\beta>0$ such that $f$ satisfies the conditions of Proposition \ref{prop - convolution is a function}. Thus provided we have $\psi \in L^1$ satisfying $(1+|\cdot|)^\ell \psi(\cdot) \in L^1$ and $\widehat{\psi} \in C^{n+1 +\ell}\big(\RR^n\setminus\{0\}\big)$ with, for every $|\kappa|\les n+1+\ell$ and some $m>\beta$,
        $$\p^\kappa \widehat{\psi}(\xi)= \mathcal{O}( |\xi|^{-n - m } ) \qquad \text{ as } |\xi| \rightarrow \infty,$$
then the convolution $\psi*f$ is a continuous function. Unfortunately, we have no control over how large $\beta$ is. Thus if we only assume that $f$ is a distribution of (unspecified) finite growth, to ensure $\psi * f$ is a function, we need $\psi$ to satisfy the conditions of Proposition \ref{prop - convolution is a function} for \emph{every} $\beta$. In particular we need $\widehat{\psi}$ to be rapidly decreasing.

Moreover, some smoothness of $\psi$ is required too. For example, for $\psi*f$ to be a well-defined function for every $f\in \s'$ of growth $0$, we require $\psi$ to be smooth. To see this take any multi-index $\kappa$ and let $f = \p^\kappa \delta_0$, where $\delta_0$ is the Dirac Delta function at the origin. Then $f$ is a distribution of growth $0$ and by Definition \ref{Defn of convolution}, for any $\phi \in \s$ we must have
        $$ \psi*f(\phi) = (-1)^{|\kappa|} \int_{\RR^n} \psi(x) \p^\kappa \phi(x) dx .$$
In particular, if $\psi * f \in \s'$ is represented by a function $g \in L^1_{loc}$  then for every $\phi \in \s$
            $$ \int_{\RR^n} \psi(x) \p^\kappa \phi(x) dx = (-1)^{|\kappa|} \int_{\RR^n} g(x) \phi(x) dx.$$
In other words $\psi$ must have $\kappa$ distributional derivatives which are locally integrable. As we can choose $|\kappa|$ to be arbitrarily large, Sobolev embedding then shows that $\psi \in C^\infty$.

\end{remark}

\section{Maximal Inequalities}\label{sec - maximal ineq}

As in the seminal work of Fefferman and Stein \cite{FS1972}, and Peetre \cite{Peetre1975, Peetre1976}, the key step in the proof of our characterisation theorems is to obtain certain pointwise maximal inequalities relating $\psi_j*f$ and $\q_j*f$. More precisely,
assuming for the moment that the convolution $\psi_k * f \in L^1_{loc}$, our goal in this section is to prove an inequality of the form
\begin{equation}\label{eqn - rough maximal suff} (\q^*_j f(x))^r \lesa \sum_{k \gtrsim j} 2^{ \delta (j-k)} \int_{\RR^n} \frac{ |\psi_k*f(x-y)|^r}{( 1 + 2^k|y|)^{\lambda r}} 2^{kn} dy \end{equation}
for some $\delta>0$, $0<r< \infty$, and $\lambda$ is as in the definition of the Peetre maximal function (\ref{eqn - defn Peetre max func}). The argument used to prove (\ref{eqn - rough maximal suff})  follows a strategy of Str\"{o}mberg-Torchinsky \cite{Stromberg1989} together with a number of technical refinements. The first of which is the following extension of the Calder\'{o}n reproducing formula.

\begin{proposition}\label{prop - Calderon gen}
Let $ \ell \g 0$. Suppose $ \psi \in L^1$ satisfies the Tauberian condition
with $\widehat{\psi} \in C^{n+1+ [\ell]}\big( \RR^n \setminus\{0\}\big)$.
There exists $\widehat{\eta}, \widehat{\phi} \in C^{n+1+[\ell]}(\RR^n)$ such that for every $g \in L^1_{loc}$ with $g(x) = \mathcal{O}(|x|^\ell)$ we have for $k \in \ZZ$ and a.e. $x\in \RR^n$
                \begin{equation}\label{Calderon gen eqn2} g(x) = \phi_k * g(x) + \sum_{j = k+1}^\infty \eta_j * \psi_j* g(x).\end{equation}
Moreover $\supp \widehat{\phi}$ is compact, and $\supp \widehat{\eta}$ is contained in some annulus about the origin.
   \begin{proof}

We start by observing  that there exists an $\eta \in L^1$ satisfying the required conditions, such that for all $\xi \not = 0$
    \begin{equation}\label{thm gen calderon - sum eta psi = 1}
            \sum_{j \in \ZZ} \widehat{\eta}(2^{-j} \xi) \widehat{\psi}(2^{-j} \xi) = 1
    \end{equation}
The construction of $\eta$  is standard and follows from the following observation: There exist positive numbers $a,b,c$  with
$0<2a\les b$ such that for every $\xi \in \RR^n$ there exists $j \in \ZZ$ satisfying $a\les2^{-j} |\xi| \les b$ and
\begin{equation*}\label{thm gen calderon - psi nonvanishing in annulus} |\widehat{\psi}(2^{-j} \xi)|^2 \g c.\end{equation*}
We refer to \cite[Chapter V, Lemma 6]{Stromberg1989} for details of this contruction in the smooth case. The modification to the nonsmooth case has been carried out in the thesis \cite{Candy2008} (see also \cite{vanEssen2005}).

Define
            $$ \widehat{\phi}(\xi) = \begin{cases}
                                \sum_{ j \les 0} \widehat{\eta}(2^{-j} \xi) \widehat{\psi}(2^{-j} \xi) \qquad \qquad & \xi \neq 0 \\
                                1           &\xi=0.
            \end{cases}$$
It is easy to check that $\phi$ satisfies the required conditions and that  $\widehat{\phi} =1$ in a neighbourhood of the origin. Moreover we have for any $k, m \in \ZZ$ with $m>k$
           \begin{equation}\label{Calderon gen eqn1}  \phi_m  - \phi_k = \sum_{j=k+1}^m \eta_j*\psi_j. \end{equation}
Take any $g \in L^1_{loc}$ satisfying $g(x)= \mathcal{O}(|x|^\ell)$. Note that as $\widehat{\phi}, \widehat{\psi} \widehat{\eta} \in C^{n+1+[\ell]}(\RR^n)$ we have $|\phi|, |\psi*\eta| \lesa ( 1 + |x|)^{-(n+1+[\ell])}$
and hence the convolutions $\eta*\psi*g$ and $\phi *g$ are well defined. Moreover since $\phi_m$ forms an approximation to the identity we have $\lim_{m \rightarrow \infty} \phi_m *g(x) = g(x)$ for a.e. $x\in \RR^n$ (more precisely this holds at every Lebesgue point of $g$). Thus taking the convolution of $g$ with both sides of (\ref{Calderon gen eqn1}) and letting $m \rightarrow \infty$ proves the result.
\end{proof}
\end{proposition}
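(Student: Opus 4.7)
The plan is to construct $\eta$ and $\phi$ so that at the Fourier side the dyadic resolution $\sum_{j \in \ZZ} \widehat{\eta}(2^{-j}\xi)\widehat{\psi}(2^{-j}\xi) = 1$ holds for all $\xi \neq 0$, and so that the partial sums telescope: $\phi_m - \phi_k = \sum_{j=k+1}^m \eta_j*\psi_j$. Once this is in hand, convolve against $g$ and let $m\to\infty$ using a standard approximation-to-the-identity argument for $\phi_m*g$.

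First I would construct $\widehat{\eta}$. By the Tauberian condition (\textbf{C2}), for every $\xi \in \sph^{n-1}$ one has $|\widehat{\psi}(t\xi)| > 0$ on some interval $[a_\xi, b_\xi]$ with $0 < 2a_\xi \les b_\xi$. Using continuity of $\widehat{\psi}$ on $\RR^n\setminus\{0\}$ and compactness of $\sph^{n-1}$, one extracts uniform constants $0 < a < b$ and $c > 0$ such that each $\xi \neq 0$ admits at least one dyadic scale $j \in \ZZ$ with $a \les 2^{-j}|\xi| \les b$ and $|\widehat{\psi}(2^{-j}\xi)|^2 \g c$. Pick a nonnegative $C^\infty$ cutoff $\chi$ supported in a slightly enlarged annulus around $\{a \les |\xi| \les b\}$ so that the dyadic sum $\sum_{k} \chi(2^{-k}\xi)$ is strictly positive on $\RR^n\setminus\{0\}$ and concentrated where $|\widehat{\psi}|$ is bounded below. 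Then set
\[
\widehat{\eta}(\xi) = \frac{\,\overline{\widehat{\psi}(\xi)}\,\chi(\xi)\,}{\displaystyle\sum_{k\in \ZZ}|\widehat{\psi}(2^{-k}\xi)|^2 \chi(2^{-k}\xi)}, \quad \xi \neq 0,
\]
and $\widehat{\eta}(0)=0$. The denominator is $C^{n+1+[\ell]}$ on $\RR^n\setminus\{0\}$ and is bounded below on the support of $\chi$; since $\widehat{\eta}$ vanishes outside the annulus it extends to a $C^{n+1+[\ell]}$ function on all of $\RR^n$ with compact support in $\{a\les |\xi|\les b\}$, and by construction $\sum_j \widehat{\eta}(2^{-j}\xi)\widehat{\psi}(2^{-j}\xi) = 1$ for every $\xi\neq 0$.

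Next, define $\widehat{\phi}(\xi) = \sum_{j \les 0} \widehat{\eta}(2^{-j}\xi)\widehat{\psi}(2^{-j}\xi)$ for $\xi \neq 0$ and $\widehat{\phi}(0)=1$. The annular support of $\widehat{\eta}$ localises each summand to $|\xi| \approx 2^j$, so for $j\les 0$ one has $|\xi| \lesa 1$; this gives $\supp\widehat{\phi}$ compact. In a neighbourhood of the origin only finitely many summands contribute, and by the resolution identity they total $1$, so $\widehat{\phi}\equiv 1$ near $0$, whence $\widehat{\phi} \in C^{n+1+[\ell]}(\RR^n)$. Since $\widehat{\phi}$ is $C^{n+1+[\ell]}$ with compact support, $n+1$ integration-by-parts arguments give $|\phi(x)|\lesa (1+|x|)^{-(n+1+[\ell])}$, and the same bound holds for $\eta*\psi$ whose Fourier transform is $C^{n+1+[\ell]}$ and supported in a fixed annulus. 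These decay rates are exactly what is required to make $\phi_k*g$ and $\eta_j*\psi_j*g$ absolutely convergent integrals when $g = \Oh(|x|^\ell)$.

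The telescoping identity $\phi_m - \phi_k = \sum_{j=k+1}^m \eta_j*\psi_j$ follows immediately from the dyadic resolution. Convolving with $g$ and letting $m \to \infty$, it remains to show $\phi_m*g \to g$ almost everywhere. Since $\widehat{\phi}(0)=1$ gives $\int \phi = 1$ and $\phi$ has an integrable radially decreasing majorant (from the $(1+|x|)^{-(n+1+[\ell])}$ decay), standard approximation-to-identity theory yields pointwise convergence at every Lebesgue point of $g$, which proves (\ref{Calderon gen eqn2}). The principal obstacle is the first step: in the classical Schwartz setting the construction of a dyadic partition from a Tauberian kernel is well-known, but here the limited regularity $\widehat{\psi} \in C^{n+1+[\ell]}(\RR^n\setminus\{0\})$ must be shown to transfer through the quotient defining $\widehat{\eta}$ and across the boundary of $\supp \chi$ without loss; choosing $\chi$ so that its support lies in the region where $|\widehat{\psi}|$ has a uniform lower bound keeps the denominator smooth and nonvanishing where it matters, and the thesis \cite{Candy2008} referenced in the excerpt handles the detailed verification.
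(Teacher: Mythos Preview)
Your proposal is correct and follows essentially the same route as the paper: construct $\widehat{\eta}$ with annular support so that $\sum_j \widehat{\eta}(2^{-j}\xi)\widehat{\psi}(2^{-j}\xi)=1$, set $\widehat{\phi}$ to be the low-frequency partial sum, telescope, and pass to the limit via an approximation-to-the-identity argument using the $(1+|x|)^{-(n+1+[\ell])}$ decay. The only difference is that you spell out the explicit Str\"omberg--Torchinsky quotient formula for $\widehat{\eta}$, whereas the paper simply cites \cite{Stromberg1989} and \cite{Candy2008} for this step; one small slip is that ``$n+1$ integrations by parts'' should read $n+1+[\ell]$ to yield the stated decay.
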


To prove the maximal function inequality (\ref{eqn - rough maximal suff}), we need to assume the boundedness of a particular auxiliary maximal function,  namely, the following variation of the Peetre maximal function
   \begin{equation}\label{maximal variant} M_{\lambda, m}(x, j) = \sup_{y \in \RR^n, k\g j } \frac{|\psi_k*f(y)| }{(1+2^j|x-y|)^\lambda} \, 2^{(j-k)m}.\end{equation}
Note that if $M_{\lambda, m}(x_0, j)$ is finite for some $x_0 \in \RR^n$, then we have $M_{\lambda, m}(x, j) < \infty$ for all $x\in \RR^n$.
With these definitions at hand we now prove the following theorem which is essentially a non-smooth and discrete version of Theorem 2a in \cite[page 61]{Stromberg1989} (see also \cite[Lemma 2]{Bui1997}).

\begin{theorem}\label{thm - Stromberg}
Let $0<r \les 1$, $0< \lambda < \infty$, $\ell \g 0$, and $m, \beta \in \RR$.  Assume $(1+|\cdot|)^\ell \psi(\cdot) \in L^1$ satisfies the Tauberian condition. Moreover, suppose that $\widehat{\psi} \in C^{\max\{  n +1 +[\ell], [\lambda] +1\}}( \RR^n \setminus \{0\})$ and for every $|\kappa| \les \max\{[\ell], [\lambda]\} + 1$ we have
        $$ \p^\kappa \widehat{\psi}(\xi) = \mathcal{O}\big(|\xi|^{-\max\{ m, \beta\} }\big) \qquad \text{ as $|\xi| \rightarrow \infty$ }.$$
Let $f$ be a distribution of growth $\ell$ such that for every $j \in \ZZ$ the distribution $\psi_j*f$ is a locally integrable function with
        $$ M_{\ell, m}(x, j) <\infty.$$
 Then we have the pointwise inequality
       \begin{equation}\label{Stromberg thm hardy version}\big( \psi_j^*f(x) \big)^r \lesa \sum_{k=j}^{\infty} 2^{(j-k)(\beta - \lambda)r} \int_{\RR^n} \frac{|\psi_k*f(x-y)|^r}{( 1 + 2^k|y|)^{\lambda r} } 2^{ kn} dy \end{equation}
with constant independent of $f$, $j$, $m$, $\ell$ and $x$.

\begin{proof}
Fix $u\g j$.   The assumption $M_{\ell, m}(x, j) < \infty$ implies that $\psi_u*f = \mathcal{O}(|x|^\ell)$. Therefore the Tauberian condition and Proposition \ref{prop - Calderon gen} give
    \begin{equation}\label{Stromberg eqn1}\psi_u*f(x) = \phi_u*\psi_u*f(x) + \sum_{k=u+1}^{\infty} \eta_k*\psi_k*\psi_u*f(x) \end{equation}
with $\widehat{\phi}, \widehat{\eta} \in C^{ \max\{n + 1 +[\ell], [\lambda] + 1\}}(\RR^n)$ and support of $\widehat{\eta}$ is contained in some annulus about the origin. An application of Lemma \ref{lem - main dilation estimate} gives
    $$ ( 1 + 2^u |x|)^\lambda |\eta_k * \psi_u(x)| \lesa 2^{-( k-u) \beta} 2^{k n}$$
and thus we have the bound
\begin{align*}
 |\eta_k*\psi_k * \psi_u * f(x)| &\lesa  \big\| ( 1 + 2^u |\cdot|)^\lambda \eta_k * \psi_u \big\|_{L^\infty} \big\| ( 1 + 2^u|x-\cdot|)^{-\lambda} \psi_k*f \big\|_{L^1}\\
&\lesa 2^{kn} 2^{- \beta ( k-u)} \big\| ( 1 + 2^u |x-\cdot|)^{-\lambda} \psi_k*f \big\|_{L^1}.
\end{align*}
 On the other hand, the decay on $\phi$ shows
        $$ |\phi_u*\psi_u*f(z)| \les \big\|  ( 1 + 2^u|\cdot|)^\lambda \phi_u \big\|_{L^\infty} \big\| ( 1 + 2^u |z-\cdot|)^{-\lambda} \psi_u * f \big\|_{L^1}\lesa 2^{un} \big\| ( 1 + 2^u |z-\cdot|)^{-\lambda} \psi_u * f \big\|_{L^1} $$
and hence via (\ref{Stromberg eqn1}) we obtain, for every $z \in \RR^n$ and any $u \g j$,
        $$|\psi_u*f(z)| \lesa 2^{(u-j)\beta} \sum_{k=u}^{\infty} 2^{(j-k)\beta}  \int_{\RR^n} \frac{|\psi_k*f(y)| }{ (1+ 2^u|z-y|)^{\lambda}} 2^{k n} dy$$
where the constant depends only on $\psi$, $\beta$, and $\lambda$ (in particular, it is independent of $f$, $j$, $\ell$, and $m$). Now, since $k \g u \g j$, we have
        \begin{align*}
            &\frac{ |\psi_k*f(y)|}{(1+2^u|z-y|)^{\lambda}} \,2^{(j-k)\beta}\\
                                    &\,\,\,\,\,\,\,\,=\Bigg( \frac{ |\psi_k*f(y)|}{(1+2^j|x-y|)^{\lambda}} \,2^{(j-k)\beta}\Bigg)^r\Bigg(\frac{|\psi_k*f(y)|}{(1+2^j|x-y|)^{\lambda}}\, 2^{(j-k)\beta} \Bigg)^{1-r}\frac{ (1+2^j|x-y|)^{\lambda} }{(1+2^u|z-y|)^{\lambda}} \\
                                    &\,\,\,\,\,\,\,\,\les \Bigg( \frac{ |\psi_k*f(y)|}{(1+2^j|x-y|)^{\lambda}} \,2^{(j-k)\beta}\Bigg)^r\, M_{\lambda, \beta}(x, j)^{1-r}\,
                                                            (1+2^j|x-z|)^{\lambda} \label{Max finite eqn3}\end{align*}
and hence using the elementary inequality $( 1 + 2^j |y|)^{-1} \lesa 2^{ k-j} (1 + 2^k |y|)^{-1} $ we deduce that
        \begin{align*} \frac{ |\psi_u*f(z)|}{(1+2^j|x-z|)^{\lambda}}\,\, 2^{(j-u)\beta}&\lesa M_{\lambda, \beta} (x, j)^{1-r} \sum_{k=u}^{\infty} 2^{(j-k)\beta r} \int_{\RR^n}\frac{ |\psi_k*f(y)|^r}{(1+2^j|x-y|)^{\lambda r}} 2^{kn} dy \\
                               &\lesa M_{\lambda, \beta} (x, j)^{1-r} \sum_{k=j}^{\infty} 2^{(j-k)(\beta -\lambda)r} \int_{\RR^n}\frac{ |\psi_k*f(y)|^r}{(1+2^k|x-y|)^{\lambda r}} 2^{kn} dy.\end{align*}
Thus taking the  supremum over $z \in \RR^n$ and $u \g j$ yields,
       \begin{equation}\label{Max finite eqn1} M_{\lambda, \beta }(x, j) \lesa M_{\lambda, \beta} (x, j)^{1-r} \sum_{k=j}^{\infty} 2^{(j-k)(\beta - \lambda) r} \int_{\RR^n}
                               \frac{ |\psi_k*f(y)|^r}{(1+2^k|x-y|)^{\lambda r}} 2^{kn} dy. \end{equation}
If we had $M_{\lambda, \beta}(x,j) < \infty$, then noting that $\psi_j^*f(x) \les M_{\lambda, \beta}(x, j)$, we obtain
    \begin{equation}\label{Stromberg} \big(\psi_j^*f(x)\big)^r \lesa  \sum_{k=j}^{\infty} 2^{(j-k)(\beta - \lambda) r}\int_{\RR^n}
                               \frac{ |\psi_k*f(y)|^r}{(1+2^k|x-y|)^{\lambda r}} 2^{kn} dy. \end{equation}
Note that the constant in (\ref{Stromberg}) is independent of $f$, $j$, $m$, $\ell$, and $x$.  Therefore it suffices to prove $M_{\lambda, \beta} <\infty$.

To this end let $m' = \max\{m, \beta\}$ and $  \lambda' = \max\{ \ell, \lambda\}$. Note that by our assumption we have $M_{\lambda', m'} \les M_{\ell, m}  <\infty$. Moreover, we have $( 1 + |\cdot|)^{\lambda'} \eta, (1 + |\cdot|)^{\lambda'} \phi \in L^\infty$ and via Lemma \ref{lem - main dilation estimate}
        $$ 2^{(k-u)m'} 2^{kn} ( 1 + 2^u |x|)^{\lambda'} |\eta_k * \psi_u(x)| < \infty.$$
Thus repeating the argument used to obtain (\ref{Stromberg}) (with $(\lambda,\beta)$ replaced by $(\lambda',m')$) we have
        \begin{equation}\label{Max finite eqn2} |\psi_u * f(y) |^r \les M_{\lambda', m'}^r(y, u) \lesa_m\sum_{k=u}^{\infty} 2^{(u-k)(m'r-n)} \int_{\RR^n}
                               \frac{ |\psi_k*f(z)|^r}{(1+2^u|y-z|)^{\lambda' r}} 2^{u n} dz.\end{equation}
Since the right hand side of (\ref{Max finite eqn2}) only gets larger if we decrease $m'$ and $\lambda'$, we deduce that (\ref{Max finite eqn2}) in fact holds for $\lambda'=\lambda$ and $m'= \beta$ (but with a constant that depends on $m$, hence this argument cannot be used to prove (\ref{Stromberg}) directly). Moreover, as
    $$ \frac{ 2^{(j-u)\beta r}}{ ( 1+ 2^j |x-y|)^{\lambda r} } \times \frac{2^{u n} }{  (1+ 2^u|y-z|)^{\lambda r}} \times 2^{(u-k)(\beta r - n)}
                    \les  2^{(j-k)(\beta r - n) }\,\frac{ 2^{jn} }{ (1+2^j|x-z|)^{\lambda r} } $$
we have for any $u\g j$
 \begin{align*}
        \frac{|\psi_u * f(y) |^r}{(1+ 2^j |x-y|)^{\lambda r}} \,2^{(j-u) \beta r} &\lesa \sum_{k=u}^{\infty} 2^{(j-k)(\beta r-n)} \int_{\RR^n}
                               \frac{ |\psi_k*f(z)|^r}{(1+2^j|x-z|)^{\lambda r}} 2^{jn} dz \\
                               &\lesa \sum_{k=j}^{\infty} 2^{(j-k)(\beta - \lambda) r} \int_{\RR^n}
                               \frac{ |\psi_k*f(z)|^r}{(1+2^k|x-z|)^{\lambda r}} 2^{kn} dz. \end{align*}
Therefore, provided the right hand side of (\ref{Stromberg}) is finite, we obtain $M_{\lambda, \beta}<\infty$ and so (\ref{Stromberg thm hardy version}) follows.
\end{proof}
\end{theorem}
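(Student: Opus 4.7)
The plan is to follow the general Strömberg--Torchinsky strategy: reconstruct $\psi_u*f$ for each $u\geq j$ via a Calderón-type formula involving higher-frequency pieces, transfer the size of each piece onto $|\psi_k*f|$ using the dilation estimates of Lemma~\ref{lem - main dilation estimate}, and then close a self-improving inequality for $M_{\lambda,\beta}(x,j)$ by an $(r,1-r)$ Hölder splitting. The preparatory input is Proposition~\ref{prop - Calderon gen}: since $M_{\ell,m}(x,j)<\infty$, each $\psi_u*f$ is locally integrable with $\mathcal{O}(|x|^\ell)$ growth, so the Tauberian condition produces kernels $\eta,\phi$ with $\widehat\eta,\widehat\phi\in C^{\max\{n+1+[\ell],[\lambda]+1\}}$ and
\[
\psi_u*f=\phi_u*\psi_u*f+\sum_{k=u+1}^\infty \eta_k*\psi_k*\psi_u*f.
\]

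Next I would apply Lemma~\ref{lem - main dilation estimate}(i) with $s=2^{-k}\leq t=2^{-u}$ to get $(1+2^u|x|)^\lambda|\eta_k*\psi_u(x)|\lesa 2^{-(k-u)\beta}\,2^{kn}$ together with the analogous bound for $\phi_u*\psi_u$. Convolving these with $\psi_k*f$ and $\psi_u*f$ respectively and summing gives, for every $z\in\RR^n$ and $u\geq j$, the raw pointwise estimate
\[
|\psi_u*f(z)|\lesa 2^{(u-j)\beta}\sum_{k=u}^\infty 2^{(j-k)\beta}\,2^{kn}\int_{\RR^n}\frac{|\psi_k*f(y)|}{(1+2^u|z-y|)^\lambda}\,dy,
\]
with constant independent of $f,j,m,\ell$.

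To turn this into a bound on $\psi_j^*f(x)\leq M_{\lambda,\beta}(x,j)$, I would transfer the localization from $z$ to $x$ via the triangle-type inequality $\,1+2^j|x-y|\leq (1+2^j|x-z|)(1+2^u|z-y|)\,$ (valid since $u\geq j$), factor $|\psi_k*f(y)|$ as $|\psi_k*f(y)|^r|\psi_k*f(y)|^{1-r}$, and bound the second factor using the definition of $M_{\lambda,\beta}(x,j)$. An elementary scale-change $(1+2^j|x-y|)^{-1}\lesa 2^{k-j}(1+2^k|x-y|)^{-1}$ then produces, after taking the supremum over $z\in\RR^n$ and $u\geq j$,
\[
M_{\lambda,\beta}(x,j)\lesa M_{\lambda,\beta}(x,j)^{1-r}\bigg(\sum_{k=j}^\infty 2^{(j-k)(\beta-\lambda)r}\int_{\RR^n}\frac{|\psi_k*f(x-y)|^r}{(1+2^k|y|)^{\lambda r}}\,2^{kn}\,dy\bigg)^{1/r},
\]
and dividing by $M_{\lambda,\beta}(x,j)^{1-r}$ yields~(\ref{Stromberg thm hardy version}).

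The main obstacle is justifying the \emph{finiteness} of $M_{\lambda,\beta}(x,j)$ before performing that division: our hypothesis only supplies $M_{\ell,m}(x,j)<\infty$, and the target indices $(\lambda,\beta)$ may be strictly smaller, in which case $M_{\lambda,\beta}$ is a priori larger. The fix is to rerun the Calderón--dilation argument once more with the enlarged parameters $\lambda'=\max\{\ell,\lambda\}$ and $m'=\max\{m,\beta\}$ (allowed by the smoothness hypothesis on $\widehat\psi$). Here no absorption is needed, because $M_{\lambda',m'}(x,j)\leq M_{\ell,m}(x,j)<\infty$, so one obtains unconditionally a pointwise bound on $|\psi_u*f(y)|^r$ with weights at the scale $(\lambda',m')$. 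Since decreasing these two parameters only enlarges the right-hand side (the factor $2^{(u-k)(\cdot\,r-n)}$ with $u\leq k$ grows, and lowering the power of the weight in the denominator enlarges the integrand), the same estimate persists at the scale $(\lambda,\beta)$. Taking the supremum dominates $M_{\lambda,\beta}(x,j)^r$ by the right-hand side of~(\ref{Stromberg thm hardy version}): if that right-hand side is infinite there is nothing to prove, and otherwise $M_{\lambda,\beta}(x,j)<\infty$, legitimizing the absorption step and completing the argument.
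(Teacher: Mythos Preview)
Your proposal is correct and follows essentially the same approach as the paper: the Calder\'on reproducing formula from Proposition~\ref{prop - Calderon gen}, the dilation bound from Lemma~\ref{lem - main dilation estimate}, the $(r,1-r)$ splitting to produce a self-improving inequality for $M_{\lambda,\beta}(x,j)$, and the bootstrap via the enlarged indices $(\lambda',m')=(\max\{\ell,\lambda\},\max\{m,\beta\})$ to secure finiteness before dividing. The only slip is cosmetic: in your displayed self-improving inequality the sum should appear without the outer exponent $1/r$ (so that dividing by $M_{\lambda,\beta}(x,j)^{1-r}$ gives $M_{\lambda,\beta}(x,j)^r\lesa S$, matching \eqref{Stromberg thm hardy version}); your surrounding text already draws the correct conclusion.
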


The required maximal inequality (\ref{eqn - rough maximal suff}) is now a corollary of the previous Str\"{o}mberg-Torchinsky type estimate, Theorem \ref{thm - Stromberg}, together with another application of the Calder\'{o}n reproducing formula in Proposition \ref{prop - Calderon gen}.

\begin{corollary}\label{cor - max ineq suff}
Let $0<r, \lambda < \infty$, $\ell \g 0$, and $m, \beta \in \RR$.  Assume $(1+|\cdot|)^\ell \psi(\cdot) \in L^1$ satisfies the Tauberian condition. Moreover, suppose that $\widehat{\psi} \in C^{n+1 + \max\{ [\ell], [\lambda]\}}( \RR^n \setminus \{0\})$ and for every $|\kappa| \les \max\{[\ell], [\lambda]\} + 1$ we have
        \begin{equation}\label{eqn - cor max ineq suff - smoothness cond} \p^\kappa \widehat{\psi}(\xi) = \mathcal{O}\big(|\xi|^{-\max\{ m, \beta\} }\big) \qquad \text{ as $|\xi| \rightarrow \infty$ }.\end{equation}
Let $f$ be a distribution of growth $\ell$ such that for every $j \in \ZZ$ the distribution $\psi_j*f$ is a locally integrable function with
        $$ M_{\ell, m}(x, j) <\infty.$$
 Then we have the pointwise inequality
      $$\big( \q_j^*f(x) \big)^r \lesa \sum_{k \gtrsim j} 2^{(j-k)(\beta-\lambda) r} \int_{\RR^n} \frac{|\psi_k*f(x-y)|^r}{( 1 + 2^k|y|)^{\lambda r} } 2^{ kn} dy$$
with constant independent of $f$, $j$, $m$, $\ell$ and $x$.
\begin{proof}
Assume $f$ is  a distribution of growth $\ell$. Then $\q_j*f = \mathcal{O}(|x|^\ell)$ and so we can apply Proposition \ref{prop - Calderon gen} and obtain
            $$ \q_j*f(x) = \phi_u*\q_j*f(x) +\sum_{k = u+1}^\infty \eta_k*\q_j*\psi_k*f(x).$$
  where $\widehat{\eta}, \widehat{\phi} \in C^{n+1+\max\{[\ell], [\frac{n}{p}]\} }(\RR^n)$, $\supp \widehat{\phi} \subset \{ |\xi|<b\}$, and $\supp \widehat{\eta} \subset\{ a<|\xi|<b\}$ for some $a, b>0$. Since $\supp \widehat{\q} \subset \{ 2^{-1} \les |\xi| \les 2\}$, by choosing $u=j-s$ with $s$ sufficiently large we have $\phi_u*\q_j =0$. Similarly, perhaps choosing $s$ slightly larger $\eta_k*\q_j=0$ for $k>j+s$. Therefore we have
        \begin{equation}\label{eqn - cor max ineq suff - calderon} |\q_j*f(x) | \les \sum_{ k = j-s}^{j+s} |\eta_k*\q_j*\psi_k*f(x)|. \end{equation}
If $r \g 1$, we simply use an application of Holder's inequality together with (\ref{eqn - cor max ineq suff - calderon}) to deduce that
        \begin{align*}
          \frac{ |\q_j * f(x-y)|}{ ( 1 + 2^j |y|)^{\lambda}} &\lesa \sum_{ j \approx k } \int_{\RR^n} 2^{-j \frac{n}{r}} ( 1 + 2^j|z-y|)^{\lambda} |\eta_k*\q_j(z-y)| \times \frac{ |\psi_k * f( x -z)|}{( 1 + 2^j |z|)^\lambda } 2^{ j \frac{n}{r}} dz \\
          &\lesa \sum_{ j \approx k } \big\| ( 1 + 2^j |\cdot|)^{-\lambda} \psi_k * f( x- \cdot) 2^{ j \frac{n}{r}} \big\|_{L^r}
        \end{align*}
where we used the decay of $\q$.
The require inequality now follows by taking the sup over $y \in \RR^n$ and then taking $r^{th}$ powers of both sides. On the other hand, if $0<r<1$, a similar application of (\ref{eqn - cor max ineq suff - calderon}) gives
        $$  \q^*_j f(x) \lesa \sum_{j \approx k} \| ( 1 + 2^j |\cdot|)^\lambda \eta_k * \q_j \|_{L^1}\,\,  \psi^*_k f(x) \lesa  \sum_{j \approx k}  \psi^*_k f(x). $$
If we again take $r^{th}$ powers of both sides, then result follows directly from an application of Theorem \ref{thm - Stromberg}.
\end{proof}
\end{corollary}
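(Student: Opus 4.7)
The plan is to combine Proposition \ref{prop - Calderon gen}, applied to $\q_j*f$ and using the Tauberian condition on $\psi$, with the Str\"omberg--Torchinsky estimate of Theorem \ref{thm - Stromberg} applied to $\psi^*_k f$. Since $f$ has growth $\ell$, the convolution $\q_j*f$ is $\mathcal{O}(|x|^\ell)$, so it satisfies the hypothesis of Proposition \ref{prop - Calderon gen}; there exist kernels $\eta$, $\phi$ with $\widehat{\phi}$ compactly supported, $\widehat{\eta}$ supported in an annulus about the origin, and the representation
\[
\q_j*f(x) = \phi_u*\q_j*f(x) + \sum_{k=u+1}^\infty \eta_k*\q_j*\psi_k*f(x)
\]
for any $u\in\ZZ$. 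Because $\supp\widehat{\q}\subset\{1/2\les|\xi|\les 2\}$, choosing $u=j-s$ for $s$ sufficiently large forces $\phi_u*\q_j\equiv 0$ and $\eta_k*\q_j\equiv 0$ when $|k-j|>s$; the Calder\'on sum collapses to a finite sum over scales $k\approx j$:
\[
\q_j*f(x) = \sum_{k=j-s}^{j+s} \eta_k*\q_j*\psi_k*f(x).
\]

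The second step is to transfer the Peetre maximal function from $\q$ to $\psi$. Because $\widehat{\eta}, \widehat{\q}$ are smooth and compactly supported in annuli, the convolutions $\eta_k*\q_j$ have fast weighted decay, giving $\|(1+2^j|\cdot|)^{\lambda}(\eta_k*\q_j)\|_{L^1}\lesa 1$ uniformly for $k\approx j$. For $r\g 1$, a H\"older inequality applied inside the convolution, combined with this weighted decay, yields
\[
\frac{|\q_j*f(x-y)|}{(1+2^j|y|)^\lambda} \lesa \sum_{k\approx j}\Big(\int_{\RR^n}\frac{|\psi_k*f(x-z)|^r}{(1+2^k|z|)^{\lambda r}}\,2^{kn}\,dz\Big)^{1/r},
\]
whose $r$-th power, after taking $\sup_{y}$, is dominated term-by-term by the right-hand side of the corollary (the finite sum over $k\approx j$ fits inside the $k\gtrsim j$ sum, the extra factor $2^{(j-k)(\beta-\lambda)r}$ being $\approx 1$). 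For $0<r\les 1$ the same decay gives the cruder pointwise bound $\q^*_j f(x)\lesa\sum_{k\approx j}\psi^*_k f(x)$, and raising to the $r$-th power (using $r\les 1$ to distribute the sum) reduces the problem to invoking Theorem \ref{thm - Stromberg} on each $\psi^*_k f(x)$.

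The hypotheses match in both sub-cases: the smoothness/decay assumption (\ref{eqn - cor max ineq suff - smoothness cond}) is exactly what Theorem \ref{thm - Stromberg} requires, and $M_{\ell,m}(x,j)<\infty$ is identical. Uniformity of the implicit constant in $j,f,\ell,m,x$ is inherited from Proposition \ref{prop - Calderon gen} (where $s$ and $\phi,\eta$ depend only on $\psi$) and from Theorem \ref{thm - Stromberg}. The main subtlety I expect is that Theorem \ref{thm - Stromberg} is stated only for $0<r\les 1$, so for $r\g 1$ the conclusion cannot be reached by invoking that theorem directly; instead one must verify by hand that the H\"older argument in the finite-sum representation above produces exactly the weight $(1+2^k|z|)^{-\lambda r}$ appearing in the statement, and that the geometric factor $2^{(j-k)(\beta-\lambda)r}$ poses no difficulty on the finite range $|k-j|\les s$.
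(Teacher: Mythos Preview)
Your proposal is correct and follows essentially the same approach as the paper's own proof: apply Proposition \ref{prop - Calderon gen} to $\q_j*f$, collapse the Calder\'on sum to finitely many terms $k\approx j$ by Fourier support considerations, then for $r\g 1$ use H\"older directly on the finite sum, while for $0<r<1$ first bound $\q^*_j f(x)\lesa \sum_{k\approx j}\psi^*_k f(x)$ and then invoke Theorem \ref{thm - Stromberg}. You have also correctly identified the key subtlety, namely that Theorem \ref{thm - Stromberg} is only available for $r\les 1$, forcing the separate H\"older argument when $r\g 1$.
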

\begin{remark}
  In the case $r\g1$, the proof of Corollary \ref{cor - max ineq suff} shows that the decay condition on $\widehat{\psi}$, (\ref{eqn - cor max ineq suff - smoothness cond}), is not needed. In fact we only need the the smoothness assumption $\widehat{\psi} \in C^{n+1+\max\{[\lambda], [\ell]\}}(\RR^n \setminus 0\}$ to ensure that the $\eta$ given by Proposition \ref{prop - Calderon gen} has sufficient decay.
\end{remark}

\begin{remark}
  A careful examination of the proof of Theorem \ref{thm - Stromberg} and Corollary \ref{cor - max ineq suff}, shows that we may replace the condition (\ref{eqn - cor max ineq suff - smoothness cond}) with the slightly weaker condition
            $$ \sup_{j \gtrsim 1} \Big( 2^{j \max\{ \beta, m\}} \big\| ( 1 + |x|)^{\max\{\lambda, \ell\}} \q_j * \psi \big\|_{L^\infty}\Big) <\infty$$
  (c.f. the ``poised spaces of Besov type'' introduced by Peetre in \cite{Peetre1975}). Alternatively, as in Remark \ref{rem - dilation est with Besov}, we may assume that
    $$ \sup_{ |\gamma| \les \max\{ [\lambda], [\ell]\}} \big\| P_{\gtrsim 1} ( x^\gamma \psi) \big\|_{\dot{B}^{\max\{ m, \beta\}}_{\infty, \infty}}<\infty.$$
\end{remark}

\section{Proof of Characterisation Theorems}\label{sec - characterisation thms}
In this section we give the proofs of our main results. We start with the sufficient direction, i.e. Theorems \ref{thm - sufficient hardy} and \ref{thm - sufficient with rapidly decreasing}. The first step is the following preliminary version of Theorem \ref{thm - sufficient with rapidly decreasing}.

\begin{theorem}\label{thm - sufficient Lp assuming maximal func finite}
  Let $0<p, q \les \infty$, $\alpha \in \RR$. Assume $\lambda>\Lambda \g 0$ and $\ell \g 0$. Let $f$ be a distribution of growth $\ell$ and $( 1 + |\cdot|)^\ell \psi \in L^1$ satisfying the following:
   \begin{enumerate}
     \item[\rm{(S1)}] the kernel $\psi$ satisfies the Tauberian condition and  we have $\widehat{\psi} \in {C^{n+1+\max\{[\ell], [\Lambda]\} }(\RR^n \setminus\{0\})}$;

     \item[\rm{(S2)}] there exists $m \g 0$ such that for every $j\in \ZZ$ the distribution $\psi_j * f$ is a locally integrable function with
                    $$ M_{\ell, m}(x, j) < \infty; $$

     \item[\rm{(S3)}] there exists $\beta>\Lambda - \alpha$ such that for every $|\gamma| \les \max\{ [\Lambda], [\ell]\} +1$
                    $$ \p^\gamma \widehat{\psi} = \mathcal{O}(|\xi|^{ -\max\{ \beta, m\}}) \qquad \text{ as $|\xi| \rightarrow \infty$.}$$

   \end{enumerate}
    If $\Lambda = \frac{n}{p}$ then
        $$ \Big( \sum_{j \in \ZZ} \big( 2^{j \alpha} \| \q_j^*f\|_{L^p} \big)^q \Big)^{\frac{1}{q}} \lesa \Big( \sum_{j \in \ZZ} \big( 2^{j \alpha} \| \psi_j *f\|_{L^p} \big)^q \Big)^{\frac{1}{q}}$$
    with constant independent of $m$ and $f$. Similarly if $\Lambda = \max\{ \frac{n}{p}, \frac{n}{q}\}$ and $p<\infty$ then
       $$\Big\| \Big( \sum_{j \in \ZZ} \big( 2^{j\alpha} \q_j^*f\big)^q \Big)^{\frac{1}{q}} \Big\|_{L^p} \lesa \Big\| \Big( \sum_{j \in \ZZ} \big( 2^{j\alpha} |\psi_j*f|\big)^q \Big)^{\frac{1}{q}} \Big\|_{L^p}$$
    and in the case $p=\infty$
       $$ \sup_Q \Big( \frac{1}{|Q|} \int_Q \sum_{j \g - \ell(Q)} \big( 2^{j \alpha} \q_j^* f(x))^q dx \Big)^\frac{1}{q} \lesa \sup_Q \Big( \frac{1}{|Q|} \int_Q \sum_{j \g - \ell(Q)}\big( 2^{j \alpha} |\psi_j* f(x)|)^q dx\Big)^\frac{1}{q}$$
    where again the implied constant is independent of $m$ and $f$. Note that when $q=\infty$, the previous inequality takes the form
$$ \sup_Q  \sup_{j \g - \ell(Q)} \frac{1}{|Q|} \int_Q   2^{j \alpha} \q_j^* f(x) dx  \lesa \sup_Q  \sup_{j \g - \ell(Q)} \frac{1}{|Q|} \int_Q  2^{j \alpha} |\psi_j* f(x)| dx,$$
where we require $\Lambda = n$.

\end{theorem}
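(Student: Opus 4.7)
The plan is to derive all three inequalities from a single pointwise estimate, namely Corollary \ref{cor - max ineq suff}, by choosing auxiliary parameters $r$ and $\lambda$ compatible with the given $\Lambda,\alpha,\beta$, and then converting that pointwise estimate to the required norm inequalities by Hardy--Littlewood maximal bounds and the summation lemma (Proposition \ref{prop - summation inequalities}). Concretely, since $\beta>\Lambda-\alpha$, I can fix $\lambda$ strictly between $\Lambda$ and $\beta+\alpha$ and then an $r>0$ with $r<p$ (Besov case) or $r<\min\{p,q\}$ (Triebel--Lizorkin case) satisfying $\lambda r>n$; note that $\lambda>\Lambda\geq n/p$ (resp.\ $\geq\max\{n/p,n/q\}$) is exactly what makes this choice possible. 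The hypotheses (S1)--(S3) then match the assumptions of Corollary \ref{cor - max ineq suff}, giving
\[
\big(\q_j^* f(x)\big)^r \lesssim \sum_{k\geq j} 2^{(j-k)(\beta-\lambda)r}\int_{\RR^n}\frac{|\psi_k*f(x-y)|^r}{(1+2^k|y|)^{\lambda r}}\,2^{kn}dy,
\]
with constants independent of $f$, $j$, $m$. Because $\lambda r>n$, the inner integral is controlled pointwise by $M(|\psi_k*f|^r)(x)$, where $M$ denotes the Hardy--Littlewood maximal operator.

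For the Besov inequality, I multiply both sides by $2^{j\alpha r}$ to absorb the $2^{j\alpha}$ into the sum as $2^{(j-k)(\beta-\lambda+\alpha)r}\,2^{k\alpha r}$, take $L^{p/r}$ of both sides, and use the boundedness of $M$ on $L^{p/r}$ (which holds since $p/r>1$). This yields
\[
2^{j\alpha r}\|\q_j^*f\|_{L^p}^r \lesssim \sum_{k\geq j}2^{(j-k)(\beta-\lambda+\alpha)r}\,2^{k\alpha r}\|\psi_k*f\|_{L^p}^r.
\]
Since $\beta-\lambda+\alpha>0$ by construction, the kernel $k\mapsto 2^{-k(\beta-\lambda+\alpha)r}\mathbf{1}_{k\geq 0}$ lies in $\ell^1$, and the first part of Proposition \ref{prop - summation inequalities} (applied in $\ell^{q/r}(\ZZ)$) converts this to the desired Besov bound after raising to the $1/r$ power.

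For the Triebel--Lizorkin inequality with $p<\infty$, I again multiply by $2^{j\alpha r}$ and keep the pointwise estimate; the second part of Proposition \ref{prop - summation inequalities}, applied pointwise in $\ell^{q/r}$ with the same summable kernel, produces a pointwise bound of $\big(\sum_j(2^{j\alpha r}(\q_j^*f)^r)^{q/r}\big)^{r/q}$ by $\big(\sum_k(2^{k\alpha r}M(|\psi_k*f|^r))^{q/r}\big)^{r/q}$. Taking $L^{p/r}$, I invoke the vector-valued Fefferman--Stein maximal inequality (legitimate since $\min\{p/r,q/r\}>1$) to replace $M(|\psi_k*f|^r)$ by $|\psi_k*f|^r$; rewriting in terms of $L^p$ and $\ell^q$ and raising to the $1/r$ power gives exactly (the Peetre-max version of) the $\dot F^\alpha_{p,q}$ bound.

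The case $p=\infty$ is the main obstacle, because the Fefferman--Stein maximal inequality on $L^{p/r}(\ell^{q/r})$ is unavailable. The standard remedy, which I will implement here, is to localize: fix a dyadic cube $Q$ with $\ell(Q)=-j_0$, and for each $j\geq j_0$ split the inner integral over $\RR^n$ into $y\in 2^{-j}\cdot 3Q$ and its complement. The near-part contribution is handled by controlling it with an unweighted average over $3Q$, to which one applies the scalar maximal inequality on $L^{p/r}(3Q)$ with $p/r=\infty$ replaced by an $L^{q/r}$-average thanks to $q/r>1$; this is a standard trick when working with $\dot F^\alpha_{\infty,q}$, as in \cite{Bui2000,Frazier1990}. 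The far-part contribution uses the weight decay $(1+2^k|y|)^{-\lambda r}$: since $\lambda r>n$ and $k\geq j\geq j_0$, one gains a factor $2^{-(k-j_0)(\lambda r-n)}$ that, together with the growth bound on $\psi_k*f$ supplied by (S2) and condition $\ell\geq 0$, is absorbed into either the same cube norm on a dilate $2^sQ$ or is summed out using $\lambda r>n$. Combining the near and far estimates, multiplying by $2^{j\alpha r}$ and summing in $k$ via Proposition \ref{prop - summation inequalities} (whose summation kernel is summable because $\beta-\lambda+\alpha>0$) then gives the $p=\infty$ bound; the interpretation in the case $q=\infty$ follows from the same argument with $\ell^q$ replaced by $\ell^\infty$, now requiring $\lambda>\Lambda=n$ so that the scalar maximal estimate on $L^1(3Q)$ is applicable after taking $r=1$.
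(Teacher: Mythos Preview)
Your argument for the Besov case and for the Triebel--Lizorkin case with $p<\infty$ is essentially identical to the paper's: choose $\lambda$ with $\Lambda<\lambda<\min\{\alpha+\beta,[\Lambda]+1\}$ and $r$ with $n/r\in(\Lambda,\lambda)$, apply Corollary~\ref{cor - max ineq suff}, dominate the inner integral by $M(|\psi_k*f|^r)$, and finish with the (vector-valued) Hardy--Littlewood maximal inequality and the summation lemma.

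The genuine divergence is in the $p=\infty$ case. The paper's move is far simpler than yours: it applies Corollary~\ref{cor - max ineq suff} with $r=q$ (and $r=1$ when $q=\infty$). Since the exponent then already matches the target $\ell^q$, no maximal operator is ever needed; one only has to integrate $\int_{\RR^n}\frac{(2^{k\alpha}|\psi_k*f(x-y)|)^q}{(1+2^k|y|)^{\lambda q}}2^{kn}\,dy$ over $x\in Q$, swap the order of integration, and split the $y$-integral into $|y|\le 2^{\ell(Q)}$ and dyadic shells $|y|\approx 2^{a+\ell(Q)}$. The near part lands on a fixed dilate $Q^*\supset Q$, and the far part is controlled by the same sup over dyadic cubes, using only $\lambda q>n$ to sum in $a$. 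In particular the argument never touches (S2) or $M_{\ell,m}$ beyond what is implicit in invoking Corollary~\ref{cor - max ineq suff}, so the constants are visibly independent of $m$ and $f$.

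Your sketch for $p=\infty$, by contrast, keeps $r<q$ and therefore needs a localized maximal-function bound. Two points in your outline would not go through as written. First, for the far part you propose to use ``the growth bound on $\psi_k*f$ supplied by (S2)''; that would contaminate the constant with $M_{\ell,m}$ and hence with $m$ and $f$, which the theorem explicitly forbids. The correct handling of the far shells is to fold them back into the $\sup_Q$-norm on larger cubes, exactly as the paper does, not to invoke (S2). Second, for $q=\infty$ you take $r=1$ and refer to ``the scalar maximal estimate on $L^1(3Q)$'', but $M$ is unbounded on $L^1$; in fact with $r=1$ no maximal inequality is needed at all, since integrating over $Q$ and using Fubini already gives what you want. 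Both issues are repairable, but the cleanest fix is precisely the paper's: take $r=q$ (resp.\ $r=1$) from the start and bypass the maximal operator entirely.
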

\begin{proof}
The proof follows the arguments used in \cite{Bui1996, Bui1997, Bui2000}, with Theorem \ref{thm - Stromberg} replacing \cite[Lemma 2]{Bui1997}.
We only prove the Triebel-Lizorkin case as the Besov-Lipschitz case is similar. As the lefthand side of the inequalities only gets larger if we decrease $\lambda$, we may assume $\max\{\frac{n}{p}, \frac{n}{q}\} = \Lambda<\lambda < \min\{\alpha + \beta, [\Lambda]+1\}$. Choose $0< r< \min\{ p, q\}$ with $\max\{\frac{n}{p}, \frac{n}{q}\} < \frac{n}{r} < \lambda$. An application of Corollary \ref{cor - max ineq suff}, together with a decomposition of $\RR^n$ into annuli centred at $x$,  gives the pointwise inequality
    \begin{equation}\label{eqn - thm suff prelim - max ineq}
        \big( 2^{ j \alpha} \q^*_j f(x)\big)^r \lesa \sum_{ k \gtrsim 1} 2^{ -k( \beta + \alpha- \lambda )r} M((2^{(k+j)\alpha} |\psi_{k+j}*f|)^r)(x)
    \end{equation}
where $M(g) = \sup_{R>0} R^{-n} \int_{|y|<R} |g(x-y)| dy$ denotes the Hardy-Littlewood maximal function, and we used the elementary estimate $\int_{\RR^n} \frac{|g(x-y)|}{(1 + 2^j |y|)^N} 2^{jn} dy \lesa M(g)(x)$ which holds provided $N>n$. Therefore, as $\frac{q}{r}, \frac{p}{r} >1$, we deduce that
    \begin{align*}
      \Big\| \Big( \sum_{j \in \ZZ} \big( 2^{j\alpha} \q_j^*f\big)^q \Big)^{\frac{1}{q}} \Big\|_{L^p} &= \bigg\| \bigg( \sum_{j \in \ZZ} \Big( \big[2^{j\alpha} \q_j^*f\big]^r \Big)^\frac{q}{r} \bigg)^{\frac{r}{q}} \bigg\|_{L^{\frac{p}{r}}}^{\frac{1}{r}} \\
      &\lesa \bigg\| \bigg( \sum_{j \in \ZZ} \bigg[ \sum_{ k \gtrsim 1} 2^{ -k( \beta + \alpha -  \lambda)r}  M\big(|2^{(k+j)\alpha}\psi_{k+j}*f|^r\big)  \bigg]^\frac{q}{r} \bigg)^{\frac{r}{q}} \bigg\|_{L^{\frac{p}{r}}}^{\frac{1}{r}} \\
      &\lesa \bigg(\sum_{k \gtrsim 1} 2^{ - k ( \beta + \alpha - \lambda)r} \bigg\| \bigg( \sum_{j \in \ZZ} \Big[ M\big(|2^{j\alpha} \psi_j*f|^r\big)\Big]^\frac{q}{r} \bigg)^{\frac{r}{q}} \bigg\|_{L^{\frac{p}{r}}}\bigg)^{\frac{1}{r}}\\
      &\lesa \bigg\| \bigg( \sum_{j \in \ZZ} \big( 2^{j\alpha} |\psi_j * f|\big)^q\bigg)^\frac{1}{q} \bigg\|_{L^p}
    \end{align*}
where we used the assumption $\beta> \lambda - \alpha$ together with vector valued Hardy-Littlewood maximal inequality of Fefferman-Stein \cite{Fefferman1971}.

The argument in the case $p=\infty$ is slightly different and is of a more computational nature. Fix a dyadic cube $Q$ and let $x \in Q$. Assume first that $q<\infty$. An application of Corollary \ref{cor - max ineq suff} with $r=q$ gives
    \begin{align}
      \sum_{ j \g - \ell(Q)} \big( 2^{j \alpha} \q_j^* f(x)\big)^q &\lesa \sum_{j \g - \ell(Q)} \sum_{k \gtrsim 1} 2^{-k( \alpha + \beta - \lambda)q} \int_{\RR^n} \frac{ \big( 2^{(j+k) \alpha} |\psi_{j+k}*f( x- y)| \big)^q}{( 1 + 2^{ (j+k)}|y|)^{\lambda q}} 2^{ ( j + k) n} dy \notag\\
            &\lesa \sum_{j \gtrsim - \ell(Q)}\int_{\RR^n} \frac{ \big( 2^{j \alpha} |\psi_j * f(x-y)|\big)^q}{(1 + 2^j|y|)^{\lambda q}} 2^{jn} dy\notag\\ \notag
            &=  \sum_{j \gtrsim - \ell(Q)} \int_{|y|\les 2^{\ell(Q)} } \frac{ \big( 2^{j \alpha} |\psi_j * f(x-y)|\big)^q}{(1 + 2^j|y|)^{\lambda q}} 2^{jn} dy \\
            &\qquad \qquad + \sum_{j \gtrsim - \ell(Q)} \sum_{a \g 1} \int_{|y| \approx 2^{ a + \ell(Q)}} \frac{ \big( 2^{j \alpha} |\psi_j * f(x-y)|\big)^q}{(1 + 2^j|y|)^{\lambda q}} 2^{jn} dy. \label{eqn - thm suff prelim - p inf int decomp}
    \end{align}
To estimate the first term in (\ref{eqn - thm suff prelim - p inf int decomp}) we let $Q^*$ denote a dyadic cube with $\ell(Q^*) \approx \ell(Q)$ such that $y + Q \subset Q^*$ for every $|y|\les 2^{ \ell(Q)}$. A computation then shows that
    \begin{align*}
      \frac{1}{|Q|} \int_Q \sum_{j \gtrsim - \ell(Q)} \int_{|y|\les 2^{\ell(Q)}}
&\frac{ ( 2^{ j \alpha} |\psi_j * f(x - y) |)^q}{(1 + 2^j |y|)^{\lambda q}} 2^{jn} dy  \,dx\\
                                &\lesa \sum_{j \gtrsim - \ell(Q)} \int_{|y| \les 2^{ \ell(Q)}} \frac{2^{jn}}{( 1 + 2^j|y|)^{\lambda q}} \frac{1}{|Q^*|}\int_{ Q^*} \big( 2^{j \alpha} |\psi_j * f(x)|\big)^q dx \, dy \\
                                &\lesa \sup_{Q'} \bigg(\frac{1}{|Q'|} \int_{Q'} \sum_{j \g - \ell(Q')} \big( 2^{j \alpha} |\psi_j *f(x)|\big)^q dx\bigg).
    \end{align*}
Thus it only remains to control the second term in (\ref{eqn - thm suff prelim - p inf int decomp}). To this end, observe that for $j \gtrsim - \ell(Q)$, $a\g 1$, and $|y| \approx 2^{ a+ \ell(Q)}$ we have
    $$ \frac{2^{jn}}{( 1 + 2^j |y|)^{\lambda q}} \lesa 2^{ j( n-\lambda q)} 2^{ -( a + \ell(Q))\lambda q} \lesa 2^{ - a( \lambda q - n)} 2^{-( a + \ell(Q))n}$$
where we used the fact that $\lambda > \frac{n}{q}$. Therefore
    \begin{align*}
      \sum_{ j \gtrsim - \ell(Q)} \sum_{a \g 1} \int_{|y| \approx 2^{ a + \ell(Q)}} &\frac{ \big( 2^{j \alpha} |\psi_j * f(x-y)|\big)^q}{(1 + 2^j|y|)^{\lambda q}} 2^{jn} dy \\
                &\lesa \sum_{ a \g 1} 2^{ - a( \lambda q - n )} \frac{1}{ ( 2^{a + \ell(Q)})^n} \int_{|x-y| \lesa 2^{ a+ \ell(Q)}} \sum_{ j \gtrsim - \ell(Q) } \big( 2^{ j \alpha} |\psi_j * f(y)|\big)^q dy\\
                &\lesa \sup_{Q'} \bigg(\frac{1}{|Q'|} \int_{Q'} \sum_{j \g - \ell(Q')} \big( 2^{j \alpha} |\psi_j *f(x)|\big)^q dx\bigg).
    \end{align*}
These two estimates imply the required inequality when $q<\infty$.

The proof in the case $p=q=\infty$ is similar, in fact simpler,  so we shall be brief. Fix a dyadic cube $Q$ and let $x\in Q$ as above. Let $j\g -\ell(Q)$. Using Corollary \ref{cor - max ineq suff} with $r=1$ we get
$$
2^{j \alpha} \q_j^* f(x) \lesa
 \sum_{k \gtrsim j} 2^{-(k-j)( \alpha + \beta - \lambda)} \int_{\RR^n} \frac{  2^{k \alpha} |\psi_k( x- y)|}{( 1 + 2^k|y|)^{\lambda }} 2^{ k n} dy.
$$
It follows that, by decomposing the $y$-integral as before and noting that $\lambda>n$ in this case, one obtains
$$
\frac{1}{|Q|}\int_Q 2^{j \alpha} \q_j^* f(x) dx
\lesa \sup_{Q'} \sup_{k\g -\ell(Q')}\bigg(\frac{1}{|Q'|} \int_{Q'}   2^{k \alpha} |\psi_k *f(x)| dx\bigg).
$$
The proof of the theorem is thus complete.
\end{proof}

The proof of the $p = \infty$ case in Theorem \ref{thm - main thm necessary cond} requires the following corollary (c.f. the proof of Lemma 4 and 5 in the work of Rychkov \cite{Rychkov1999a}).

\begin{corollary}\label{cor - p=infty Petree maximal function est}
  Let $0<q<\infty$, $\lambda>\frac{n}{q}$,  and $\lambda>n$ when $q=\infty$. Let $k\in \ZZ$. Then for any dyadic cube $Q$ we have
\begin{align*}
 \bigg(\frac{1}{|Q|} \int_Q \sum_{ j \g -(\ell(Q) + k) } \big( 2^{ j \alpha} \q^*_j f(x)  \big)^q \, dx\bigg)^{1/q} & \lesa (1+|k|)^\frac{1}{q} \big\| f \big\|_{\dot{F}^\alpha_{\infty, q}}, \; q<\infty\\
 \sup_{ j \g -(\ell(Q) + k)}\frac{1}{|Q|}
\int_Q   2^{ j \alpha} \q^*_j f(x) \, dx & \lesa  \big\| f \big\|_{\dot{F}^\alpha_{\infty, \infty}}.
\end{align*}
\begin{proof}
Assume first that $q<\infty$.
  An application of Theorem \ref{thm - sufficient Lp assuming maximal func finite} with $\psi = \q$ (in which case the assumptions (S1), (S2), and (S3) clearly hold) gives
    $$\bigg(\sup_{Q'} \frac{1}{|Q'|} \int_{Q'} \sum_{ j \g - \ell(Q')} \big( 2^{ j \alpha} \q^*_j f(x)  \big)^q \, dx\bigg)^{1/q} \lesa \| f \|_{\dot{F}^\alpha_{\infty, q}}.$$
  Thus it is enough to show that for $j < - \ell(Q)$,
           \begin{equation}\label{eqn - cor p=infty Petree max func est - const on small cubes}\frac{1}{|Q|} \int_Q \big( \q^*_j f(x) \big)^q dx \lesa \frac{1}{|Q'|} \int_{Q'} \big( \q^*_j f(x) \big)^q  dx \end{equation}
  where $Q'$ is a dyadic cube with $Q \subset Q'$ and $\ell(Q') = -j$. To this end, note that if $x, x' \in Q'$, then $|x-x'| \les 2^{n-j}$ and hence for any $y \in \RR^n$ we have
        $$ ( 1 + 2^j |x' - y|)^\lambda \les ( 1 + 2^j |x' - x| + 2^j| x - y|)^\lambda \lesa  ( 1 + 2^j |x-y|)^\lambda.$$
Therefore the definition of $\q^*_jf$ implies that for any $x, x' \in Q'$ we have  $\q^*_jf(x) \lesa \q^*_jf(x')$. Consequently $\q^*_jf(x)$ is essentially constant on cubes of side lengths $< 2^{-j}$, in particular, we have (\ref{eqn - cor p=infty Petree max func est - const on small cubes}). Thus the result for $q<\infty$ follows. The modification when $q=\infty$ is done in a similar manner to the proof of Theorem \ref{thm - sufficient Lp assuming maximal func finite}.

\end{proof}
\end{corollary}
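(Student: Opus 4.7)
The plan is to apply Theorem \ref{thm - sufficient Lp assuming maximal func finite} to $\psi = \q$ (which trivially satisfies assumptions (S1)--(S3)), yielding the baseline estimate
$$ \sup_{Q'} \frac{1}{|Q'|} \int_{Q'} \sum_{j \g -\ell(Q')} \big( 2^{j\alpha} \q_j^* f(x) \big)^q dx \lesa \| f \|_{\dot{F}^\alpha_{\infty,q}}^q, $$
with the sum replaced by a supremum in the case $q=\infty$. When $k \les 0$, the sum $\sum_{j \g -(\ell(Q)+k)}$ is already contained in $\sum_{j \g -\ell(Q)}$, so the claim follows directly from the baseline. It therefore suffices to treat $k>0$ and to handle the additional low-frequency block $-(\ell(Q)+k) \les j < -\ell(Q)$.

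The key observation is that $\q_j^* f$ is essentially constant on dyadic cubes of side length at most $2^{-j}$. Indeed, if $x, x' \in Q'$ with $\ell(Q') = -j$, then $|x-x'| \lesa 2^{-j}$, so the triangle inequality gives $(1+2^j|x-y|)^\lambda \lesa (1+2^j|x'-y|)^\lambda$ uniformly in $y$, and hence $\q_j^* f(x) \lesa \q_j^* f(x')$ directly from the definition of the Peetre maximal function (\ref{eqn - defn Peetre max func}).

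Using this observation, for each low-frequency index $j$ in the range $-(\ell(Q)+k) \les j < -\ell(Q)$, I would pick the unique dyadic cube $Q' \supset Q$ of side length $2^{-j}$ and estimate
$$ \frac{1}{|Q|} \int_Q \big( 2^{j\alpha} \q_j^* f(x) \big)^q dx \lesa \frac{1}{|Q'|} \int_{Q'} \big( 2^{j\alpha} \q_j^* f(x) \big)^q dx \lesa \| f \|_{\dot{F}^\alpha_{\infty,q}}^q, $$
where the second step bounds a single summand by the full baseline sum over $Q'$ (valid because $j \g -\ell(Q')$ by construction). Since there are at most $k$ such indices $j$, summing their contributions gives $\lesa k \, \| f \|_{\dot{F}^\alpha_{\infty,q}}^q$; combining with the baseline bound for the remaining high-frequency terms and taking $q$-th roots yields the $(1+|k|)^{1/q}$ factor in the statement.

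The $q=\infty$ case follows from the same essentially-constant property but is simpler: since one takes a supremum rather than a sum, each dyadic index $j$ is controlled by $\|f\|_{\dot{F}^\alpha_{\infty,\infty}}$ via the baseline, and no $(1+|k|)$ factor appears. The only real work is the comparison of weights $(1+2^j|x-y|)^\lambda \approx (1+2^j|x'-y|)^\lambda$ when $|x-x'| \lesa 2^{-j}$, which is routine but provides the essential mechanism allowing one to enlarge the region of integration from $Q$ to $Q'$ without loss.
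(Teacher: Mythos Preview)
Your proposal is correct and follows essentially the same approach as the paper: apply Theorem \ref{thm - sufficient Lp assuming maximal func finite} with $\psi=\q$ for the baseline, then use the near-constancy of $\q_j^* f$ on cubes of side length $\lesa 2^{-j}$ (via the same triangle-inequality comparison of the weights $(1+2^j|x-y|)^\lambda$) to pass from $Q$ to the larger dyadic ancestor $Q'$ with $\ell(Q')=-j$. The only difference is that you spell out the counting of the $k$ low-frequency indices to produce the $(1+|k|)^{1/q}$ factor explicitly, whereas the paper leaves this final step implicit.
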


\subsection{Sufficient Conditions}\label{subsec - suff cond}

We now come to the proof of the sufficient direction of our characterisations which are now a straightforward consequence of Theorem \ref{thm - sufficient Lp assuming maximal func finite}.

\begin{proof}[Proof of Theorem \ref{thm - sufficient with rapidly decreasing}]
We reduce to checking the conditions (S1), (S2), and (S3). The condition (S1) is clear. An application of Proposition \ref{prop - convolution is a function} shows that there exists $m \g 0$ such that (S2) holds. Finally the rapid decay of $\p^\kappa \widehat{\psi}$ implies that (S3) holds.
\end{proof}

The proof of our characterisation with $L^p$ replaced with $H^p$, namely Theorem \ref{thm - sufficient hardy}, again follows from Theorem \ref{thm - sufficient Lp assuming maximal func finite}.

\begin{proof}[Proof of Theorem \ref{thm - sufficient hardy}]
Let $ f \in \s'$ be a distribution of growth $\ell$ and take $\phi \in \s$ with $\int \phi \not = 0$. The idea is to show that there exists an $s>0$ such that the kernel  $\phi_s*\psi$ satisfies the assumptions (S1), (S2), and (S3) of Theorem \ref{thm - sufficient Lp assuming maximal func finite}. To check (S1), note that by following the argument leading to (\ref{thm gen calderon - psi nonvanishing in annulus}), there exists $0<2a<b$ and $c>0$ such that for every $a \les t \les b$ and $\xi \in \sph^{n-1}$
                $$ |\widehat{\psi}( t \xi)| \g c. $$
Since $\phi \in \s$ and $\widehat{\phi} (0) = \int \phi \neq 0$, there exists $r>0$ such that $|\widehat{\phi}(\xi)| >0$ for $|\xi|<r$. Now as $\widehat{\phi_s}(\xi) = \widehat{\phi}( s \xi)$ we only need to choose $s< \frac{r}{a}$ to ensure that $\phi_s*\psi$ satisfies the Tauberian condition. Clearly the remaining conditions in (S1) are also satisfied.

To verify (S2), observe that since $\phi_s*f = \mathcal{O}(|x|^\ell)$, the convolution $(\phi_s * \psi)_j*f$ is well-defined. Furthermore, an application of Lemma \ref{lem - smoothness of distributions} shows that there exists $m$ such that
        $$ |\phi_{sk} * f(x) | \lesa 2^{ |k| m}  ( 1 + |x|)^\ell$$
which implies that for any $x \in \RR^n$, $j \in \ZZ$,
    $$ \sup_{ y \in \RR^n, k \g j} \frac{ |(\phi_s * \psi)_k * f(y)|}{ ( 1 + 2^j|x-y|)^\ell} 2^{(j-k)m} \lesa \sup_{ y \in \RR^n, k \g j}  \frac{ ( 1 + |y|)^\ell}{(1 + 2^j |x-y|)^\ell} 2^{ ( j - k)m} 2^{ |k| m} < \infty.$$
Thus (S2) holds. Finally, the rapid decay of $\p^\kappa \widehat{\phi}$ ensures that (S3) holds provided that $\p^\kappa \widehat{\psi}$ is slowly increasing as $|\xi| \rightarrow \infty$.

Therefore, we may apply Theorem \ref{thm - sufficient Lp assuming maximal func finite} together with the pointwise bound $|\q_j*f(x)| \les \q^*_jf(x)$, to deduce that
        \begin{align*}
          \| f \|_{\dot{B}^\alpha_{p, q}} &\lesa \bigg( \sum_{ j \in \ZZ} \Big( 2^{j \alpha} \big\| \phi_{sj}*\psi_j * f \big\|_{L^p}\Big)^q \bigg)^\frac{1}{q} \\
            &\lesa \bigg( \sum_{ j \in \ZZ} \Big( 2^{j \alpha} \big\| \sup_{t>0} |\phi_{t}*\psi_j * f| \big\|_{L^p}\Big)^q \bigg)^\frac{1}{q} \\
            &\lesa \bigg( \sum_{ j \in \ZZ} \Big( 2^{j \alpha} \big\| \psi_j * f \big\|_{H^p}\Big)^q \bigg)^\frac{1}{q}
        \end{align*}
where the last line follow from the $H^p$ charaterisation of Fefferman-Stein \cite{FS1972}. Similarly, the Triebel-Lizorkin case follows via
            \begin{align*} \| f\|_{\F} &\lesa \Big\| \Big( \sum_{j \in \ZZ} \big( 2^{j\alpha} |\phi_{sj}*\psi_j*f|\big)^q \Big)^{\frac{1}{q}} \Big\|_{L^p}\\
                                        &\lesa \Big\| \Big( \sum_{j \in \ZZ} \big( 2^{j\alpha} \sup_{t>0} |\phi_t*\psi_j*f|\big)^q \Big)^{\frac{1}{q}} \Big\|_{L^p}.
            \end{align*}
An identical computation gives the $p=\infty$ case. Thus the proof of Theorem \ref{thm - sufficient hardy} is complete.
\end{proof}

\begin{remark}
An alternative, more direct proof is possible of the Besov-Lipschitz case in Theorem \ref{thm - sufficient hardy}. The details are as follows.  Assume $f$ is  a distribution of growth $\ell$. Choose $\rho \in \s$ with $\widehat{\rho}(\xi) = 1$ for $\xi \in \supp \widehat{\q}$ and let $\frac{n}{p} < \lambda < [\frac{n}{p}] +1$. Then from (\ref{eqn - cor max ineq suff - calderon}) we deduce that
         \begin{align*}
           |\q_j*f(x) | &\les \sum_{ k = j-s}^{j+s} |\eta_k*\q_j*\rho_j*\psi_k*f(x)|\\
                        &\lesa \sum_{ k = j-s}^{j+s}  \sup_{y \in \RR^n} \frac{ |\rho_j*\psi_k*f(x-y)|}{(1+2^j|y|)^\lambda} \int_{\RR^n} |\eta_k*\q_j(y)| (1+ 2^j |y|)^\lambda dy \\
                        &\lesa \sum_{k = j-s}^{j+s} M^{**}_{\lambda}(\psi_k*f)(x)
         \end{align*}
  where
        $$ M^{**}_\lambda(g)(x) = \sup_{t>0, y \in \RR^n} \frac{ \rho_t*g(x-y)}{\Big( 1+ \frac{|y|}{t} \Big)^{\lambda}}$$
 is the  maximal function of Fefferman-Stein and we used the fact that $\eta(x) = \mathcal{O} (|x|^{-n-1-[\frac{n}{p}]})$. By the characterisation of $H^p$ by Fefferman-Stein \cite{FS1972} we have
            $$ \| M^{**}_\lambda g \|_{L^p} \lesa \| g\|_{H^p}$$
  provided $\lambda > \frac{n}{p}$. Therefore
            \begin{align*} \| f \|_{\B} =\Big( \sum_{j \in \ZZ} \big( 2^{j \alpha} \| \q_j*f\|_{L^p}\big)^q \Big)^{\frac{1}{q}}
                        &\lesa  \Big( \sum_{j \in \ZZ} \big( 2^{j \alpha} \big\|  M^{**}_{\lambda}(\psi_j*f) \big\|_{L^p}\big)^q \Big)^{\frac{1}{q}}\\
                        &\lesa \Big( \sum_{k \in \ZZ} \big( 2^{k \alpha} \| \psi_k*f\|_{H^p}\big)^q \Big)^{\frac{1}{q}}. \end{align*}
Hence (\ref{sufficient - hardy besov}) is proved.

 We note that a similar argument gives the corresponding Triebel-Lizorkin version as
      \begin{equation}\label{tangential maximal function triebel}\| f\|_{\F} \lesa \Big\| \Big( \sum_{j \in \ZZ} \big( 2^{j \alpha} M^{**}_\lambda(\psi_j*f) \big)^q \Big)^{\frac{1}{q}} \Big\|_{L^p} \end{equation}
 but this does not give  (\ref{sufficient - hardy triebel}) as there is no vector valued inequality relating $M^{**}_\lambda$ with $\sup_{t>0} |\phi_t *g|$. Thus we cannot directly deduce (\ref{sufficient - hardy triebel}) from (\ref{tangential maximal function triebel}) and instead need to argue via Theorem \ref{thm - sufficient Lp assuming maximal func finite}.
\end{remark}

Theorem \ref{thm - sufficient with rapidly decreasing} required $\p^\kappa \widehat{\psi}$ to be rapidly decreasing to ensure that the convolution $\psi*f$ was a locally integrable function. One way to avoid this fairly strong assumption on the kernel $\psi$,  was presented in Theorem \ref{thm - sufficient hardy} where we replaced the $L^p$ norm with the Hardy norm $H^p$ which is defined for elements of $\s'$. Consequently we only had to make sense of $\psi_j*f$ as an element of $\s'$ rather than $L^1_{loc}$. On the other hand, an alternative approach to finding a pointwise definition of the convolution is to instead make further assumptions on $f$. In particular, if we assume that $f$ is a slowly increasing function of order $\ell$, then the convolution $\psi*f$ is well defined as a function without the rapidly decreasing assumption. This leads to the following version of Theorem \ref{thm - sufficient Lp assuming maximal func finite}.

\begin{theorem} \label{thm - sufficient with f slowly increasing}
  Let $0<p, q \les \infty$, $\alpha \in \RR$,  and $\ell \g 0$. Let $\Lambda \g 0$ and $\beta> \Lambda - \alpha$. Assume  $(1 + |\cdot|)^{-\ell} f \in L^\infty$.
Suppose $\psi \in L^1$ satisfies the Tauberian condition with $(1+|\cdot|)^{\ell} \psi(\cdot) \in L^1$. Furthermore,  assume that $\widehat{\psi} \in C^{n+1 + \max\{ [\ell],  [\Lambda]\} }(\RR^n \setminus\{0\})$ with
    $$\p^\kappa \widehat{\psi}(\xi) = \mathcal{O} \big( |\xi|^{- \max\{\beta, 0\} }\big) \quad \text{as} \; |\xi| \to \infty$$
for $|\kappa| \les \max\{ [\Lambda] , [\ell]\} +1 $. If  $\Lambda = \frac{n}{p}$ then
        $$ \| f\|_{\B} \lesa \Big( \sum_{j \in \ZZ} \big( 2^{j \alpha} \| \psi_j *f\|_{L^p} \big)^q \Big)^{\frac{1}{q}}$$
Similarly if $ \Lambda = \max\{ \frac{n}{p} , \frac{n}{q}  \}$ and $p<\infty$ then
         $$ \| f\|_{\F} \lesa \Big\| \Big( \sum_{j \in \ZZ} \big( 2^{j\alpha} |\psi_j*f|\big)^q \Big)^{\frac{1}{q}} \Big\|_{L^p}$$
and in the case $p=\infty$
       $$ \| f\|_{\dot{F}^\alpha_{\infty,q}}
 \lesa \sup_Q \Big( \frac{1}{|Q|} \int_Q \sum_{j \g - \ell(Q)}\big( 2^{j \alpha} |\psi_j* f(x)|)^q dx\Big)^\frac{1}{q},$$
with the usual interpretation when $q=\infty$ (in which case $\Lambda = n$).
\begin{proof}
We begin by  observing that for $k\g0$
    $$|\psi_k*f(x)| \lesa (1+|x|)^{\ell}$$
which implies that $M_{\ell, 0}(x, 0)<\infty$ and consequently $M_{\ell, 0}(x, j)<\infty$ for every $j \in \ZZ$. Therefore result follows from Theorem \ref{thm - sufficient Lp assuming maximal func finite}.
\end{proof}
\end{theorem}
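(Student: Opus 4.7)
The plan is to reduce the statement directly to Theorem \ref{thm - sufficient Lp assuming maximal func finite} by verifying its hypotheses (S1), (S2), (S3) with the choice $m = 0$. Conditions (S1) and (S3) are immediate from the assumptions on $\psi$: the Tauberian condition, the $C^{n+1+\max\{[\ell],[\Lambda]\}}$-regularity of $\widehat{\psi}$ away from the origin, and the decay estimate $\p^\kappa \widehat{\psi}(\xi) = \mathcal{O}(|\xi|^{-\max\{\beta,0\}})$ are exactly what (S1) and (S3) require when $m=0$. So the entire task is the verification of (S2).

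To verify (S2), I would first observe that since $(1+|\cdot|)^{-\ell} f \in L^\infty$, the function $f$ is in particular a distribution of growth $\ell$, and the pointwise integral defining $\psi_k*f(y)$ converges absolutely for every $y$ because $(1+|\cdot|)^\ell \psi \in L^1$; in particular $\psi_k*f$ is continuous, hence locally integrable. Applying the Peetre-type inequality $(1+|y-z|)^\ell \lesa (1+|y|)^\ell (1+|z|)^\ell$ followed by the change of variables $w = 2^k z$ yields
$$|\psi_k * f(y)| \lesa (1+|y|)^\ell \int |\psi(w)|(1+2^{-k}|w|)^\ell\, dw \lesa \max\{1, 2^{-k\ell}\}(1+|y|)^\ell.$$
For fixed $j$, the factor $\max\{1, 2^{-k\ell}\}$ is uniformly bounded in $k \g j$ by $\max\{1, 2^{-j\ell}\}$, so that $|\psi_k*f(y)| \lesa_j (1+|y|)^\ell$ for all $k \g j$. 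Combined with $(1+|y|)^\ell \lesa (1+|x|)^\ell (1+|x-y|)^\ell$ and the elementary comparison of $(1+|x-y|)^\ell$ with $(1+2^j|x-y|)^\ell$ (which contributes a $j$-dependent constant when $j<0$), this yields $M_{\ell,0}(x,j) < \infty$ for every $x \in \RR^n$ and $j \in \ZZ$, establishing (S2).

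With all three conditions in hand, Theorem \ref{thm - sufficient Lp assuming maximal func finite} delivers the claimed inequalities with $\q_j^*f$ in place of $\q_j*f$; the desired conclusions then follow from the pointwise bound $|\q_j*f(x)| \les \q_j^*f(x)$. There is no substantive obstacle: the whole point of the statement is that the slowly-increasing hypothesis on $f$ provides an alternative route to (S2) which sidesteps the rapid-decay requirement on $\widehat{\psi}$ imposed in Theorem \ref{thm - sufficient with rapidly decreasing}. The only minor pitfall is that the bound on $|\psi_k*f|$ is not uniform in $k$ (it deteriorates like $2^{-k\ell}$ as $k\to -\infty$), so the finiteness of $M_{\ell,0}(x,j)$ for $j<0$ genuinely uses that the supremum in (\ref{maximal variant}) is taken only over $k\g j$.
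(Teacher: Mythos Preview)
Your proof is correct and follows essentially the same route as the paper: verify the hypotheses of Theorem \ref{thm - sufficient Lp assuming maximal func finite} with $m=0$, the only nontrivial point being (S2), which follows from the pointwise bound $|\psi_k*f(y)|\lesa \max\{1,2^{-k\ell}\}(1+|y|)^\ell$ obtained from $(1+|\cdot|)^{-\ell}f\in L^\infty$ and $(1+|\cdot|)^\ell\psi\in L^1$. The paper is terser---it records only the uniform bound for $k\g 0$, observes $M_{\ell,0}(x,0)<\infty$, and then asserts $M_{\ell,0}(x,j)<\infty$ for all $j$---whereas you spell out explicitly why the bound extends to $j<0$ (only finitely many additional $k$ enter the supremum, each contributing a $j$-dependent constant); this is exactly the minor pitfall you flag, and your treatment of it is correct.
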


\begin{remark}
Assume $\alpha > n/p$. Then elements in $\B$ or in $\F$  are functions that satisfy the growth condition in Theorem \ref{thm - sufficient with f slowly increasing} with $\ell = \alpha-n/p$ when $\alpha-n/p \notin \NN$,  and $\ell > \alpha-n/p$ when $\alpha-n/p \in \NN$ (see Remark \ref{growth of besov function}). Hence this theorem readily gives the characterisation of these function spaces without the rapidly deacreasing assumption on the Fourier transform of the  kernel $\widehat{\psi}$.
\end{remark}

\subsection{Necessary Conditions}\label{subsection - Nec Cond}

We now come to the necessary direction of our characterisation, namely the proof of Theorem \ref{thm - main thm necessary cond}. As in the proof of Theorem \ref{thm - sufficient Lp assuming maximal func finite}, we follow the maximal function arguments used in the work of Bui-Paluszynski-Taibleson \cite{Bui1996, Bui1997,Bui2000}. In addition, in the case $p=\infty$, we rely also on an argument due to Rychkov \cite{Rychkov1999a}.

\begin{proof}[Proof of Theorem \ref{thm - main thm necessary cond}]
We first show that $\psi\in \dot{B}^{\frac{n}{p}-\alpha}_{1,1}$. To this end, the assumptions on $\psi$ together with Lemma \ref{lem - main dilation estimate} imply that
 \begin{equation}\label{proof of necessary - convolution estimate}
|\q_j * \psi(x)| \lesa \begin{cases}
          2^{ -jm} \frac{1}{(1 + |x|)^{ n + 1 + [\Lambda]} }\qquad & j \g 0 \\
          2^{ j r } \frac{ 2^{ j n}}{( 1 + 2^j |x|)^{n+1+[\Lambda]}} & j\les 0.
        \end{cases}
\end{equation}
It follows that
$$
	\|\psi\|_{\dot{B}^{\frac{n}{p}-\alpha}_{1,1}} \lesa \sum_{j\g 0} 2^{-j(\alpha-\frac{n}{p}+m)}
+ \sum_{j<0} 2^{j(r-\alpha+\frac{n}{p})} < \infty\quad (\text{as}\; \alpha-n/p+m>0,r>\alpha).
$$

Let $f\in \B$ or $f\in \F$. Then $f \in \dot{B}^{\alpha - \frac{n}{p}}_{\infty, \infty}$ by a well-known embedding theorem and hence by Theorem \ref{thm - classical calderon on B} and Theorem \ref{thm - calderon and the pointwise defn of conv}, after possibly subtracting a polynomial $\rho$, we see that $f$ is a distribution of growth $\ell$, the distribution $\psi_j*f$ is in fact a bounded continuous function, and moreover for every $x \in \RR^n$ we have the pointwise identity
            $$ \psi_k* f(x) = \sum_{j \in \ZZ^n} \q_j *\q_j*\psi_k*f(x).$$
Since $\psi^*_k f$ only gets smaller if we increase $\lambda$ we may assume $\Lambda <\lambda < \min\{ m + \alpha, [\Lambda] + 1\}$ where $\Lambda = \frac{n}{p}$ in the Besov-Lipschitz case, and $\Lambda = \max\{ \frac{n}{p}, \frac{n}{q}\}$ in the Triebel-Lizorkin case (this is possible as we assume that $m> \Lambda - \alpha$). If we now use an application of the above  Calder\'on formula we obtain
        \begin{align*}
          \frac{|\psi_k *f(x-z)|}{(1+ 2^k |z|)^\lambda}
            &\les \sum_{j \in \ZZ} \int_\RR \frac{ |\psi_k*\q_j(y)| }{(1+ 2^k |z|)^\lambda} |\q_j*f(x-z-y)| dy \\
            &\les \sum_{ j \in \ZZ} |\q_j^*f(x)| \int_{\RR} |\psi_k*\q_j(y)| \frac{(1+2^j |z+y|)^\lambda  }{(1+ 2^k |z|)^\lambda} dy.
        \end{align*}
A change of variables shows that
    $$ \int_{\RR} |\psi_k*\q_j(y)| \frac{(1+2^j |z+y|)^\lambda  }{(1+ 2^k |z|)^\lambda} dy = \int_{\RR} |\psi*\q_{ j - k}(y)| \frac{(1+2^{j-k} |2^k z+y|)^\lambda  }{(1+ 2^k |z|)^\lambda} dy $$
and hence we have the pointwise estimate
 \begin{equation}\label{proof of necessary cond - estimate on peetre maximal function}
   2^{k\alpha} \psi^*_k f(x) \lesa \sum_{j \in \ZZ} a_{j - k } 2^{j \alpha} \q_j^*f(x)
  \end{equation}
where
        $$ a_{j} = 2^{ - j\alpha } \sup_{x \in \RR^n} \int_{\RR^n} \big| \psi * \q_j( y) \big| \frac{ ( 1 + 2^j | x + y|)^\lambda}{( 1 + |x|)^\lambda} dy.$$
Thus, provided $(a_j) \in \ell^{ \min\{ p, q, 1\}}$ ( $(a_j) \in \ell^{\min\{ q, 1\}}$ in the Triebel-Lizorkin case), the first part of Theorem \ref{thm - main thm necessary cond}, (\ref{necessary  besov}) and (\ref{necessary triebel}), follows from Proposition \ref{prop - summation inequalities} together with the maximal function characterisation of Peetre \cite{Peetre1975}. In fact, using the obvious  inequality
            $$ \frac{ ( 1 + 2^j| x + y|)}{(1 + |x|)} \lesa \begin{cases} 2^{  j} ( 1 + |y|)  \qquad &j \g 0 \\
                                                                (1 + 2^j |y|) &j \les 0 \end{cases}$$
together with the estimates (\ref{proof of necessary - convolution estimate}), we get
\begin{equation}\label{eqn - proof of necessary - estimate on aj}
a_j \lesa \begin{cases} 2^{j(\lambda-m-\alpha)}\int_{\RR^n}(1+|y|)^{\lambda-n-1-[\Lambda]}\,dy, \qquad j\g 0\\
2^{j(r-\alpha)}\int_{\RR^n}2^{jn}(1+|2^j y|)^
{-n-1-[\Lambda]}\,dy, \qquad j\les 0. \end{cases}\end{equation}
By our assumptions, $\lambda>[\Lambda]\g 0, r>\alpha$, and $\lambda-m-\alpha < 0$ by our choice of $\lambda$, we deduce that $(a_j)\in \ell^\beta$ for all $\beta> 0$. Hence (\ref{necessary besov}) and (\ref{necessary triebel}) are proved.

It remains to consider the case $p=\infty$. We provide detail only in the case $q<\infty$ as the modification when $q=\infty$ is familiar by now. As in the case $p<\infty$,  an application of (\ref{proof of necessary cond - estimate on peetre maximal function}) together with (\ref{eqn - proof of necessary - estimate on aj}) shows that there exists $\delta>0$ such that
        $$ 2^{ k \alpha}  \psi^*_k f(x) \lesa \sum_{j \in \ZZ} 2^{ - |j| \delta} 2^{ (k-j) \alpha} \q^*_{k-j} f(x). $$
Therefore, Corollary \ref{cor - p=infty Petree maximal function est} gives for any dyadic cube $Q$
\begin{align*}
      \frac{1}{|Q|}\int_Q \sum_{k \g - \ell(Q) } \big( 2^{ k \alpha} \psi^*_k f(x) \big)^q dx &\lesa \frac{1}{|Q|} \int_Q \sum_{ k \g - \ell(Q)} \bigg( \sum_{ j \in \ZZ} 2^{ - |j| \delta} 2^{ ( k -j) \alpha} \q^*_{k-j} f(x) \bigg)^q dx \\
  &\lesa  \sum_{j \in \ZZ} 2^{ - |j| \delta \min\{ 1, q\}}  \bigg( \frac{1}{|Q|} \int_Q  \sum_{ k \g - ( \ell(Q) + j)} \big( 2^{ k \alpha} \q^*_kf(x) \big)^q dx \bigg) \\
  &\lesa \bigg( \sum_{j \in \ZZ} ( 1 + |j|) 2^{ -|j| \delta \min\{ 1, q\}} \bigg) \| f \|^q_{\dot{F}^\alpha_{\infty, q}} \\
  &\lesa \| f \|^q_{\dot{F}^\alpha_{\infty,q}},
\end{align*}
where, in the second inequality, we also use the $q$-triangle inequality when $q\les1$ and H\"{o}lder's inequality when $q> 1$.
Thus  (\ref{necessary triebel p=infty}) follows.

We now turn to the proof of (\ref{necessary hardy besov}), (\ref{necessary hardy triebel}), and (\ref{necessary hardy triebel p=infty}). Let $\phi \in \s$. Since $\phi_t*f$ satisfies the same properties as $f$ ($\phi_t*f \in \B$), we can repeat the proof of (\ref{proof of necessary cond - estimate on peetre maximal function}) to deduce that
        $$2^{k\alpha} |\phi_t * \psi_k*f(x)| \les 2^{k\alpha}\psi^*_k(\phi_t*f)(x) \lesa \sum_{j \in \ZZ} a_{k-j} 2^{j\alpha} \q_j^*(\phi_t*f)(x)$$
with constant independent of $t$. If we now follow the arguments leading to (\ref{necessary besov}), (\ref{necessary triebel}), and (\ref{necessary triebel p=infty}), it suffices to show that
            \begin{equation}\label{necessary eqn3} \q_j^*(\phi_t*f)(x) \lesa \q_j^*f(x).\end{equation}
To this end, let $\mu \in \s$ with $\widehat{\mu} = 1$ for $2^{-1} < |\xi| < 2$ and $ \sup \widehat{ \mu} \subset \{ 2^{-2} <|\xi|<4\}$. Then $\q_j = \q_j * \mu_j$ and so
        \begin{align*}
          \frac{|\q_j*\phi_t*f(x-y)|}{(1+ 2^j|y|)^\lambda} \lesa \q_j^*f(x) \int_{\RR^n} |\mu_j*\phi_t(z) | (1+ 2^j |z|)^\lambda dz.
        \end{align*}
If $2^{j} <\frac{1}{t}$ then (\ref{necessary eqn3}) follows easily by changing the order of integration. On the other hand if $2^{j} \g \frac{1}{t}$, then since $\phi \in \s$ we use part (i) of Lemma \ref{lem - main dilation estimate} to deduce that
        $$ |\phi_t*\mu_j(x)| \lesa \Big( \frac{2^{-j} }{t } \Big)^{n+1+[\lambda]} \frac{ t^{-n} }{ \Big(1+ \frac{|x|}{t}\Big)^{n+[\lambda]+1}} \lesa \frac{2^{jn}}{(1+ 2^j |x|)^{n+1+[\lambda]}}.$$
Therefore (\ref{necessary eqn3}) follows and so we obtain (\ref{necessary hardy besov}), (\ref{necessary hardy triebel}), and (\ref{necessary hardy triebel p=infty}).\\

\end{proof}

\section{Appendix}\label{sec - appendix}

\subsection{Proof of Theorems \ref{thm - classical calderon on s'} and \ref{thm - classical calderon on B}}\label{subsec - proof of classical calderon form}

\begin{proof}[Proof of Theorem \ref{thm - classical calderon on s'}]
Define $\phi \in \s$ by letting $\widehat{\phi}(\xi) = \sum_{j \les 0 } \widehat{\q}^2( 2^{- j} \xi) $ for $\xi \not = 0$ and $\widehat{\phi}(0) = 1$. Then  $\widehat{\phi}(\xi) = 1$ for $|\xi| \les 1$, $\widehat{\phi}(\xi) = 0$ for $|\xi|>2$ and moreover for any $ N<0<M$ we have the identity
        \begin{equation}\label{eqn - thm cald on s' - phi ident} \sum_{j = N+1}^M \q_j * \q_j(x) = \phi_M(x)  - \phi_N(x). \end{equation}
Let $f \in \s'$. Then there exists $a>0$ such that for every $\rho \in \s$
    $$ \big| f ( \rho) \big| \lesa \sum_{|\alpha|, |\beta| \les a } \| \rho\|_{\alpha, \beta}$$
where $\| \rho\|_{\alpha, \beta } = \sup_x \big| x^\alpha \p^\beta \rho(x) \big|$. In particular, for $N<0$ and any $\kappa$ we have
\begin{align*} \big| \p^\kappa\big( \phi_N*f\big)(  x ) \big| &= 2^{ N |\kappa|} \big| \big(\p^\kappa \phi\big)_N*f(  x ) \big| \\
&\lesa 2^{N |\kappa|} \sum_{|\alpha|, |\beta| \les a} \sup_{y \in \RR^n} \big| y^\alpha \p^\beta \big( \p^\kappa \phi\big)_N( x - y) \big|\\
 &\lesa  ( 1 + |x|)^a \,2^{ N( |\kappa| + n - a)}.\end{align*}
For $N<0$, we define the polynomial $p_N(x)$ as
        \begin{equation}\label{eqn - thm classical cald on s' - defn of poly} p_N(x) =  \sum_{ |\kappa|\les a - n} \frac{ \p^\kappa \big(\phi_N*f\big)(0)}{\kappa !}  x^\kappa \end{equation}
(if $a<n$ we can just take $p_N = 0$ for every $N$). By expanding $\phi_N*f$ as a Taylor series about $x=0$ and using the bound on $\p^\kappa (\phi_N*f)$ obtained above, we have
    \begin{align*}
      \big| \phi_N*f(x) - p_N(x) \big|&\lesa  |x|^{a + 1 - n} \sum_{|\kappa| = a - n + 1} \int_0^1  \big| \p^\kappa\big( \phi_N*f\big)( t x ) \big| dt \\
                                        &\lesa ( 1 + |x|)^{ 2a + 1 - n} 2^{ N}.
    \end{align*}
Consequently we see that $\phi_N*f - p_N \rightarrow 0$ in $\s'$ as $N \rightarrow -\infty$. On the other hand, since $\int \phi = 1$, we have $\phi_M * f \rightarrow f$ in $\s'$ as $M \rightarrow \infty$. Therefore, the identity (\ref{eqn - thm cald on s' - phi ident}) gives
    $$  \lim_{N \rightarrow -\infty} \Big( p_N + \sum_{j = N + 1}^\infty \q_j * \q_j * f\Big) = \lim_{M \rightarrow \infty} \phi_M * f  - \lim_{N \rightarrow - \infty} \big(\phi_N * f  - p_N \big)= f $$
as required.
\end{proof}

A similar argument gives Theorem \ref{thm - classical calderon on B}.

\begin{proof}[Proof of Theorem \ref{thm - classical calderon on B}]
 Let $\phi$ be as in the proof of Theorem \ref{thm - classical calderon on s'}. As previously, the key point is to study the convergence of $\phi_N*f$ as $N \rightarrow  - \infty$. Define the polynomials $p_N(x)$ as
            $$ p_N(x) = \sum_{|\gamma|\les [\alpha]} \frac{x^\gamma}{\gamma!} \p^\gamma\big(\phi_N*f\big)(0).$$
The form of the Taylor series remainder given in \cite{Folland1990} implies that for any $N<N'$
    \begin{align*}
      \big| \big(\phi_N*f - &p_N\big)(x) - \big(\phi_{N'} *f - p_{N'}\big)(x) \big| \\
      &\les \sum_{j=N+1}^{N'} \Big| \q_j * \q_j * f(x) - \sum_{|\gamma| \les [\alpha]} \frac{x^\gamma}{\gamma!} \p^\gamma(\q_j*\q_j*f)(0) \Big| \\
       &\lesa |x|^{[\alpha]} \sum_{j=N+1}^{N'} \sum_{|\gamma| = [\alpha]} \int_0^1 \big|  \p^\gamma(\q_j*\q_j*f)(tx) - \p^\gamma(\q_j*\q_j*f)(0)\big| dt.
    \end{align*}
If we now observe that
\begin{align*}
  \big|  \p^\gamma(\q_j*\q_j*f)(tx) - &\p^\gamma(\q_j*\q_j*f)(0)\big|\\
  &\lesa 2^{ j |\gamma|} \sup_{0<t<1} \big\| (\p^\gamma \q)( t 2^j x + \cdot) - (\p^\gamma \q)(\cdot) \big\|_{L^1}  \| \q_j * f \|_{L^\infty} \\
  &\lesa \| f \|_{\dot{B}^\alpha_{\infty, \infty}} 2^{j( |\gamma| - \alpha)} \min\{ 1, |x| 2^j \}
\end{align*}
we obtain the inequality
    \begin{equation}\label{eqn - thm calderon with besov - growth bound on phi*f}
    \big| \big(\phi_N*f - p_N\big)(x) - \big(\phi_{N'} *f - p_{N'}\big)(x) \big| \lesa |x|^{[\alpha]} \sum_{j=N+1}^{N'} 2^{j([\alpha]-\alpha)} \min\{ 1, 2^j |x| \}.
    \end{equation}
In particular, we have
    $$ \big| \big(\phi_N*f - p_N\big)(x) - \big(\phi_{N'} *f - p_{N'}\big)(x) \big| \lesa |x|^{[\alpha]+1} \sum_{j \les N'} 2^{ j([\alpha]+1-\alpha)} \lesa |x|^{[\alpha]+1} 2^{ N'([\alpha]+1 - \alpha)}.$$
Consequently $\phi_N*f - p_N$ forms a Cauchy sequence in $\s'$ as $N \rightarrow - \infty$ and hence converges to some $g \in \s'$. On the other hand it is easy to check that $\supp \widehat{ \phi_N *f} \subset \{ |\xi| \les 2^{N+1}\}$ and hence we must have $\supp \widehat{g} \subset \{ 0 \}$. Therefore $g = p$ for some polynomial $p$ and thus, from (\ref{eqn - thm cald on s' - phi ident}), we deduce that
    $$ \lim_{N\rightarrow - \infty} \bigg( p_N + \sum_{N+1}^\infty \q_j * \q_j * f\bigg)  = f  - \lim_{ N \rightarrow - \infty} \big( \phi_N*f - p_N \big) = f - p$$
as claimed.

It remains to prove the growth bound. To this end, let $N<0\les M$ and write
        $$ p_N + \sum_{j=N+1}^M \q_j*\q_j*f = \sum_{j=1}^{M} \q_j*\q_j*f + p_{N} + \sum_{j=N+1}^0 \q_j*\q_j*f. $$
To control the first term, we note that the support of $\q$ together with an application of Lemma \ref{lem - main dilation estimate} implies that $ \| \rho * \q_j \|_{L^1} \lesa 2^{ -j\beta}$ for every $\beta>0$. Thus choosing $\beta$ sufficiently large we have
        $$ \bigg|\rho*\bigg( \sum_{j=1}^M \q_j*\q_j*f\bigg)(x) \bigg|\les \sum_{j\g1} \| \rho*\q_j \|_{L^1} \| \q_j *f \|_{L^\infty} \lesa \sum_{j \g 1} 2^{ - j( \beta + \alpha)}\lesa 1.$$
On the other hand, for the second sum we note that if $\alpha<0$ then $p_N=0$ and we can simply write
    $$ \sum_{j=N+1}^0 \big| \q_j*\q_j*f(x) \big| \les \sum_{j \les 0} 2^{ - j \alpha} \| f \|_{\dot{B}^\alpha_{\infty, \infty} }\lesa 1$$
which gives the required estimate in the case $\alpha<0$. If $\alpha\g0$, we apply (\ref{eqn - thm cald on s' - phi ident}) followed by (\ref{eqn - thm calderon with besov - growth bound on phi*f}) to deduce that
        \begin{align*} \big| p_{N}(x) + \sum_{j=N+1}^0 &\q_j*\q_j*f(x)\big| \\
            &\les \big|p_0(x)\big| + \big| \phi_0*f(x) - p_0(x) - \big(\phi_N*f(x) - p_N(x) \big)\big| \\
        &\lesa ( 1 + |x|)^{[\alpha]} + |x|^{[\alpha]+1}  \sum_{ j \les - \log_2(|x|) } 2^{  j ([\alpha]+1- \alpha)  }  + |x|^{[\alpha]} \sum_{ - \log_2(|x|)  \les j \les 0} 2^{  j([\alpha]- \alpha)} \\
      &\lesa 1 + |x|^{[\alpha]+1} 2^{ -  ( [\alpha]+1 - \alpha) \log_2(|x|) }  + |x|^{[\alpha]} \begin{cases}  \log_2(|x|) \qquad &\alpha \in \NN \\
                                                                        2^{ -([\alpha]-\alpha) \log_2(|x|) } &\alpha \not \in \NN \end{cases}\\
      &\lesa 1 + \begin{cases}
        |x|^\alpha \log{ |x| } \qquad &\alpha \in \NN \\
        |x|^\alpha      &\alpha \not \in \NN.
      \end{cases}
       \end{align*}
As before, taking the convolution with $\rho$ then gives the required estimate. Thus the result follows.
\end{proof}

\providecommand{\bysame}{\leavevmode\hbox to3em{\hrulefill}\thinspace}
\providecommand{\MR}{\relax\ifhmode\unskip\space\fi MR }
\providecommand{\MRhref}[2]{%
  \href{http://www.ams.org/mathscinet-getitem?mr=#1}{#2}
}
\providecommand{\href}[2]{#2}

\end{document}